\colorlet{darkgreen}{green!50!black}
\newtheorem{thm}{Theorem}[section]
 \newtheorem*{cor*}{Corollary}
 \newtheorem{lem}[thm]{Lemma}
 \theoremstyle{definition}
 \newtheorem{df}[thm]{Definition}
 \newtheorem{prob}[thm]{Problem}
 \theoremstyle{remark}
 \newtheorem{rem}[thm]{Remark}
 \numberwithin{equation}{section}
\def\be#1 {\begin{equation} \label{#1}}
\def\ee{\end{equation}}
\def\sqw{\hbox{\rlap{\leavevmode\raise.3ex\hbox{$\sqcap$}}$%
\sqcup$}}
\def\findem{\ifmmode\sqw\else{\ifhmode\unskip\fi\nobreak\hfil
\penalty50\hskip1em\null\nobreak\hfil\sqw
\parfillskip=0pt\finalhyphendemerits=0\endgraf}\fi}
\newcommand{\R}{\mathbb R}
\newcommand{\N}{\mathbb N}
\newcommand{\Z}{\mathbb Z}
\newcommand{\I}{\mathcal{I}}
\newcommand{\J}{\mathcal{J}}
\newcommand{\mes}{\operatorname{mes}}
\title{Bounds for spectral projectors on generic tori}
\author[P. Germain]{Pierre Germain\(^1\)}
\address{\(^1\){Courant Institute of Mathematical Sciences}, {New York University}, {{251 Mercer	Street}, {New York}, {N.Y. 10012-1185}, {USA}}; \href{https://orcid.org/0000-0003-3148-4127}{ORCiD:0000-0003-3148-4127}, \href{mailto:pgermain@cims.nyu.edu}{pgermain@cims.nyu.edu}}
\author[S. L. Rydin Myerson]{Simon L. Rydin Myerson\(^2\)}
\address{\(^2\){Mathematics Institute}, {University of Warwick}, {{Zeeman Building}, {Coventry}, {CV4 7AL}, {United Kingdom}};
\href{https://orcid.org/0000-0002-1486-6054}{ORCiD:0000-0002-1486-6054},
\href{mailto:Simon.Rydin-Myerson@warwick.ac.uk}{Simon.Rydin-Myerson@warwick.ac.uk}, \url{https://maths.fan}
}
\keywords{Spectral projectors, discrete restriction, Lp resolvent estimates, lattice points in convex bodies, quadratic forms}
\thanks{\textbf{Acknowledgements:} PG was supported by the Simons collaborative grant on weak turbulence. SLRM was supported by a Leverhulme Early Career Fellowship, by the Fields Institute
	for Research in Mathematical Sciences, and by the National Science Foundation grant DMS 1363013. The authors are thankful to Yu Deng for insightful discussions at an early stage of this project.}
\thanks{{MSC2020}: {42A45}, {11D75}, {11L07} (42B15, 11H06, 11P21)}
\begin{document}

\begin{abstract}We investigate norms of spectral projectors on thin spherical shells for the Laplacian on generic tori, including generic rectangular tori. We state a conjecture and partially prove it, improving on previous results concerning arbitrary tori.
\end{abstract}

\maketitle

\tableofcontents

\section{Introduction}

\subsection{Boundedness of spectral projectors on Riemannian manifolds}

Given a Riemannian manifold $M$ with Laplace-Beltrami operator $\Delta$, and given some $\lambda \geq 1$ and $0<\delta <1$, let
\begin{equation}
\label{defP}
P_{\lambda,\delta} = P_{\lambda,\delta}^{\chi} = \chi \left( \frac{\sqrt{-\Delta} - \lambda}{\delta} \right)
\end{equation}
where $\chi$ is a cutoff function {}{}{taking values in \([0,1]\)} supported in $[-1,1]$, {}{}{and} equal to $1$ on $[-\frac{1}{2},\frac{1}{2}]$. {}{}{This definition is understood through the functional calculus for the Laplace-Beltrami operator, which is a self-adjoint operator on (complete) Riemannian manifolds.}

A general question is to estimate
$$
{ \| P_{\lambda,\delta}^\chi \|_{L^2 \to L^p}, \qquad \mbox{where $p \in [2,\infty]$}},
$$
the exact choice of $\chi$ being immaterial\footnote{If $\chi$ and $\psi$ are two cutoff functions, then $\| P_{\lambda,\delta}^\chi \|_{L^2 \to L^p} \lesssim \| P_{\lambda,\delta}^\psi \|_{L^2 \to L^p} + \| P_{\lambda-\delta,\delta}^\psi \|_{L^2 \to L^p} + \| P_{\lambda + \delta,\delta}^\psi \|_{L^2 \to L^p}$.}.

The answer to this question is known in the case of the Euclidean space: define
$$
p_{ST} = \frac{2(d+1)}{d-1}, \qquad
\sigma(p) = d - 1 - \frac{2d}{p}.
$$
Then by Stein-Tomas~\cite{Stein, SteinTomas} we have
\begin{equation}
\label{swallow}
\| P_{\lambda,\delta} \|_{L^2 \to L^p} \sim
\left\{
\begin{array}{ll}
\lambda^{{\sigma(p)}/{2}} \delta^{1/2} & \mbox{if $p \geq p_{ST}$}, \\
\lambda^{\frac{d-1}{2} \left( \frac{1}{2} - \frac{1}{p} \right)} \delta^{\frac{(d+1)}{2}\left( \frac{1}{2} - \frac{1}{p} \right)} & \mbox{if $2 \leq p \leq p_{ST}$},
\end{array}
\right.
\end{equation}
{}{}{where we write $A \sim B$ if the two quantities $A$ and $B$ are such that $\frac{1}{C}A \leq B \leq CA$, for a constant $C$ which depends only on the \(d\)}.
The answer is again known in the case of compact Riemannian manifolds when $\delta = 1$ (Sogge~\cite{Sogge}{}{}{, Theorem 5.1.1}), for which
\begin{equation}
\label{boundSogge}
\| P_{\lambda,1} \|_{L^2 \to L^p} \sim
\left\{
\begin{array}{ll}
\lambda^{{\sigma(p)}/2} & \mbox{if $p \geq p_{ST}$}, \\
\lambda^{\frac{d-1}{2} \left( \frac{1}{2} - \frac{1}{p} \right)} & \mbox{if $2 \leq p \leq p_{ST}$}.
\end{array}
\right.
\end{equation}

\subsection{Spectral projectors on tori}

\subsubsection{Different kinds of tori}

From now on, we focus on the case of tori given by the quotient {}{}{$\mathbb{R}^d / (\mathbb{Z} e_1 + \dots + \mathbb{Z} e_d)$}, where $e_1,\dots,e_d$ is a basis of $\mathbb{R}^d$, with the standard metric. This is equivalent to considering the operators
$$
P_{\lambda,\delta} = \chi \left( \frac{\sqrt{-\mathcal{Q}(\nabla)} - \lambda}{\delta} \right) \qquad \mbox{on} \;\; \mathbb{T}^d = \mathbb{R}^d / \mathbb{Z}^d,
$$
where $\nabla$ is the standard gradient operator, and $\mathcal{Q}$ is a quadratic form on $\mathbb{R}^d$, with coefficients $\beta_{ij}$: 
$$
\mathcal{Q}(x) = \sum_{i=1}^d \beta_{ij} x^i x^j \qquad \implies \qquad {}{}{\mathcal{Q}(\nabla) = \sum_{i,j=1}^d \beta_{ij} \partial_i \partial_j}.
$$
Here \((\beta_{ij})\) is a symmetric positive definite real  matrix.
Dispensing with factors of $2\pi$, which can be absorbed in $\mathcal{Q}$, the associated Fourier multiplier has the symbol
$$
\chi \left( \frac{\sqrt{\mathcal{Q}(k)} - \lambda}{\delta} \right).
$$
Standard and rectangular tori correspond to the following particular cases.

\begin{itemize}
\item The \textit{standard torus} corresponds to $(e_i)$ being orthonormal, or $\beta_{ij} = \delta_{ij}$.
\item A \textit{rectangular torus} corresponds to $(e_i)$ being orthogonal, or equivalently to a diagonal quadratic form $\beta_{ij} = \beta_i \delta_{ij}$.
\end{itemize}

We will be concerned in this article with generic tori, which for our purposes are defined as follows.

\begin{df} \label{defgeneric}~
\begin{itemize}
\item 
Consider the rectangular tori with \(\beta_i\in[1,2]\) for each \(i\); we say  a property is true for \textit{generic rectangular tori} if it is true on a set of $(\beta_i)_{1\leq i \leq d}$ with full Lebesgue measure in \([1,2]^d\).
\item
Consider the tori with  $\beta_{ij} = \delta_{ij} + h_{ij}$ for each \(1\leq i,j \leq d\) and some $h_{ij}=h_{ji} \in [-\frac{1}{10d^2} , \frac{1}{10d^2} ]$; we say  a property is true for \textit{generic  tori} if it is true for a set of $(h_{ij})_{1\leq i\leq j \leq d}$ with full Lebesgue measure in \( [-\frac{1}{10d^2} , \frac{1}{10d^2} ]^{d(d+1)/2}\).
\end{itemize}
\end{df}

\subsubsection{The conjecture} It was conjectured in~\cite{GM} that, for an arbitrary torus,
\begin{equation}
\label{conjecturearbitrary}
\| P_{\lambda,\delta} \|_{L^{2} \to L^p} \lesssim 1 + (\lambda \delta)^{\frac{(d-1)}{2} \left( \frac{1}{2} - \frac{1}{p} \right)} + \lambda^{\frac{d-1}{2} - \frac{d}{p}} \delta^{1/2} \qquad \mbox{if $\delta > \lambda^{-1}$;}
\end{equation}
{}{}{here and below we denote $A \lesssim B$ if the quantities $A$ and $B$ are such that $A \leq CB$ for a constant $C$, where \(C\) may depend on the dimension \(d\)}.
This paper also contains new results towards this conjecture, as well as a survey of known results

\medskip

In the present paper, we turn our attention towards generic tori, for which the typical spacing between eigenvalues of \(\sqrt{-\Delta}\) is $\lambda^{1-d}$. {{}{}{Indeed, if $k$ ranges in $[-R,R]^d$, then $\sqrt{\mathcal{Q}(k)}$ takes $(2R)^d$ values in $[-CR,CR]$; if the $\beta_{ij}$ are chosen generically we expect these to distribute approximately uniformly.}} This naturally leads to replacing the above conjecture by the following: for generic tori,
\begin{equation}
\label{conjecturegeneric}
\| P_{\lambda,\delta} \|_{L^{2} \to L^p} \lesssim_{\beta,\epsilon} 1 + (\lambda \delta)^{\frac{(d-1)}{2} \left( \frac{1}{2} - \frac{1}{p} \right)} + \lambda^{\frac{d-1}{2} - \frac{d}{p}} \delta^{1/2} \qquad \mbox{if $\delta > \lambda^{1-d+\epsilon}$}
\end{equation}
{}{}{(here the notation $A \lesssim_\alpha B$ means that the constant $C$ in the relation $A \leq C B$ may depend on the parameter $\alpha$)}.

\subsubsection{Known results if $p=\infty$}\label{sec:known-L-infty}
For $p=\infty$, the problem of bounding $\| P_{\lambda,\delta}\|_{L^1 \to L^\infty}$ is closely related to counting lattice points in ellipsoids, a classical question in number theory. Namely, choosing $\chi = \mathbf{1}_{[-1,1]}$,
$$
\| P_{\lambda,\delta} \|_{L^1 \to L^\infty}  = N(\lambda + \delta) - N(\lambda - \delta),
$$
where $N(\lambda)$ is the counting function associated to the quadratic form $\mathcal{Q}$, defined as the number of lattice points $n \in \mathbb{Z}^d$ such that  $\mathcal{Q}(n) < \lambda^2$.

To leading order, $N(\lambda)$ equals $\operatorname{Vol}(E) \lambda^d$, where $\operatorname{Vol}(E)$ is the ellipsoid $\{\mathcal{Q}(x) < 1\}$; the error term is denoted $P(\lambda)$:
$$
N(\lambda) = \operatorname{Vol}(B_1) \lambda^d + P(\lambda).
$$
For the state of the art regarding \(P(\lambda)\) for any fixed \(\mathcal{Q}\) we refer the reader to the comments after~(1.3) in~\cite{GM}, and to the work of Bourgain-Watt~\cite{BW} giving an improved bound for the standard two-dimensional torus. For generic quadratic forms, {}{}{there are a number of additional results}.
\begin{itemize}
\item For generic diagonal forms, Jarn\'ik~\cite{Jarnik28} showed that $P(\lambda) = O (\lambda^{d/2})$ if $d \geq 4$; a weaker, but more general, result is due to Schmidt~\cite{Schmidt}.
\item Landau~\cite{Landau24} showed that $P(\lambda) = \Omega(\lambda^{\frac{d-1}{2}})$ for generic forms.
\item It has been shown that the average size of the error, say $[\mathbb{E} |P(\lambda)|^2]^{1/2}$ is $O(\lambda^{\frac{d-1}{2}})$, for different types of averaging: over translations of the integer lattice~\cite{Kendall}, over shears~\cite{Kelmer}, and over the coefficients $(\beta_i)$ of a diagonal form~\cite{HIW}.
\item When \(d=2\), Trevisan~\cite{Trevisan} has investigated in more detail the distribution of the normalised error~\(P(\lambda)\lambda^{-1/2}\) when \(\mathcal{Q}\) is chosen at random and \(\lambda\) is large.
\item The quantity \(P(\lambda+\delta)-P(\lambda-\delta)\) has also received attention. In particular it has average size \(O(\sqrt{\delta \lambda^{d-1}}) \)  when averaged over translations of the integer lattice~\cite{CGG}, provided that \(\delta \leq \lambda^{-\frac{d-1}{d+1}-\epsilon}\).
\end{itemize}
These results lead to the conjecture that the correct bound for the error term \(P(\lambda)\) for generic \(\mathcal{Q}\), or for generic diagonal \(\mathcal{Q}\), would be $O(\lambda^{\frac{d-1}{2}})$. Meanwhile for  \(P(\lambda+\delta)-P(\lambda-\delta)\) the corresponding conjecture would be \(O(\sqrt{\delta \lambda^{d-1}})\), at least {}{}{for} \(\delta>\lambda^{1-d+\epsilon}\).

\subsubsection{Known results if $p<\infty$}

After the pioneering work of Zygmund~\cite{Zygmund}, Bourgain~\cite{Bourgain} asked for $L^p$ bounds {}{}{for} eigenfunctions of the Laplacian on the (standard) torus. He conjectured that, if $\varphi$ is an eigenfunction of the Laplacian with eigenvalue $\lambda$, then
\begin{equation}
\label{conjecturebourgain}
\| \varphi \|_{L^p} \lesssim \lambda^{\frac{d}{2} - 1 - \frac{d}{p}} \| \varphi \|_{L^2} \qquad \mbox{if}\; \;p \geq \frac{2d}{d-2},
\end{equation}
which is equivalent to the case $\delta = \lambda^{-1}$ of~\eqref{conjecturearbitrary} for the standard torus. Progress on this conjecture appeared in a series of articles~\cite{Bourgain2,BourgainDemeter1,BourgainDemeter2} culminating in the proof of the $\ell^2$-decoupling conjecture by Bourgain and Demeter~\cite{BourgainDemeter3}, which implies~\eqref{conjecturebourgain} if $p \geq \frac{2(d-1)}{d-3}$ and $d \geq 4$.

Bounds for spectral projectors are essentially equivalent to bounds for the resolvent $(-\Delta + z)^{-1}$. This was the point of view adopted in Shen~\cite{Shen}, Bourgain-Shao-Sogge-Yau~\cite{BSSY}, and Hickman~\cite{Hickman}. Here the goal is to prove a sharp bound when \(p^*=\frac{2d}{d-2}\) and \(\delta\) is sufficiently large.

Finally, the authors of the present paper {}{}{were able to prove the conjecture}~\eqref{conjecturearbitrary} when \(\delta\) is sufficently large by combining $\ell^2$-decoupling with a geometry of numbers argument~\cite{GM}.

To the best of our knowledge, all works concerned with $p<\infty$ address either the case of standard tori, or the general case of arbitrary tori; {}{}{the generic case does not seem to have been considered specifically}. This will be {}{}{a} focus of the present paper.

\subsection{A new result through harmonic analysis}

The conjecture~\eqref{conjecturegeneric} was proved in~\cite{GM} for arbitrary tori and  \(\delta\) not too small, and for generic tori we can improve the range for \(\delta\) as follows.

\begin{thm}
\label{thmharmonicanalysis}
For generic rectangular tori and for generic tori (in the sense of Definition~\ref{defgeneric}), the conjecture~\eqref{conjecturegeneric} is verified if $p>p_{ST}$ and, for some $\epsilon >0$,
$$
\mbox{either $\delta > \max \left( \lambda^{-1}, \lambda^{\frac{p}{p-2} \left[ 1 - \frac{d}{2} + \frac{1}{p} \left( \frac{d^2 - d -2}{d-1} \right) \right] + \epsilon} \right)$} \quad 
\mbox{or $ \lambda^{1 - \frac{d}{2} + \frac{1}{p} \frac{d(d-3)}{d-1}+\epsilon} < \delta < \lambda^{-1}$.}
$$
Namely, under these conditions, there holds for almost all choices of $(\beta_i)_{1\leq i \leq d}$ (for generic rectangular tori) or $(\beta_{ij})_{1\leq i,j \leq d}$ (for generic tori)
$$
\| P_{\lambda,\delta} \|_{L^2 \to L^p} \lesssim _{\beta,\epsilon} \lambda^{\frac{d-1}{2} - \frac{d}{p}} \delta^{1/2}.
$$
\end{thm}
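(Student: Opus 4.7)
The overall plan is to combine the $\ell^2$-decoupling theorem of Bourgain--Demeter with an averaged lattice-point count that exploits the genericity of $\mathcal Q$, reducing matters in both regimes of $\delta$ to a second-moment estimate over the parameter space of Definition~\ref{defgeneric}. I would begin by passing to a $TT^*$ formulation: $\|P_{\lambda,\delta}\|_{L^2\to L^p}^{2}$ is comparable to the $L^{p'}\to L^p$ norm of the convolution operator whose symbol is supported on the thin spectral shell $S_{\lambda,\delta}=\{k\in\Z^d:|\sqrt{\mathcal Q(k)}-\lambda|\lesssim\delta\}$. The shell is a neighbourhood of an ellipsoid, so it has nondegenerate Gaussian curvature and is amenable to decoupling.

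Next, I would cover $S_{\lambda,\delta}$ by caps $C_\alpha$ of angular width $\rho$ (with $\rho\in[\lambda^{-1/2},1]$ to be optimised), write $P_{\lambda,\delta}=\sum_\alpha P_\alpha$ with $P_\alpha$ the Fourier projector to $C_\alpha$, and apply the $\ell^2$-decoupling inequality to control $\|P_{\lambda,\delta}f\|_{L^p}$ by $\lambda^\epsilon (\lambda\rho)^{\sigma(p)/2}\bigl(\sum_\alpha\|P_\alpha f\|_{L^p}^2\bigr)^{1/2}$. On each cap, a cheap Bernstein / Plancherel argument bounds $\|P_\alpha f\|_{L^p}$ by $N_\alpha^{1/2-1/p}\|P_\alpha f\|_{L^2}$, where $N_\alpha=\#(S_{\lambda,\delta}\cap C_\alpha)$. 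Summing and using $\sum_\alpha\|P_\alpha f\|_{L^2}^2=\|P_{\lambda,\delta} f\|_{L^2}^2\leq\|f\|_{L^2}^2$, one is reduced to bounding $\max_\alpha N_\alpha$ (and the sum $\sum_\alpha N_\alpha^{1-2/p}$ after a further interpolation).

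The heart of the argument is the genericity. For rectangular tori I would expand $N_\alpha$ as a Fourier series in the $\beta_i$, or alternatively compute $\mathbb E_\beta\bigl[\bigl(N_\alpha-\mathbb E_\beta N_\alpha\bigr)^2\bigr]$ by opening the square as a double sum over pairs $(k,k')$ and using oscillation of $\beta\mapsto e^{2\pi i(\mathcal Q(k)-\mathcal Q(k'))/\delta}$ against a smooth cutoff on $[1,2]^d$. This yields a variance of the expected shape $\lambda^\epsilon\bigl(\text{main term}\bigr)$, and Chebyshev combined with a Borel--Cantelli argument over a countable dyadic net in $(\lambda,\delta,\rho)$ extracts a full-measure set of $\beta$ on which $N_\alpha\lesssim_\beta\lambda^\epsilon(\lambda^{d-1}\delta\rho^{d-1}+1)$. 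The generic-torus case is handled identically, with the $d(d+1)/2$-dimensional parameter space $(h_{ij})$ replacing $(\beta_i)$; the required cancellation of exponential sums is even easier since one has more parameters over which to integrate.

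Finally I would optimise $\rho$ in the decoupling-plus-counting inequality. In the large-$\delta$ regime the optimum lies at $\rho\sim\lambda^{-1/2}$ and the two terms in the generic count balance at exactly the first threshold stated in the theorem, giving the bound $\lambda^{(d-1)/2-d/p}\delta^{1/2}$. In the small-$\delta$ regime $\delta<\lambda^{-1}$, the $+1$ term in the generic count becomes dominant on most caps, but one may restrict to the caps that actually contain lattice points; the number of such caps is $\lesssim \lambda^{d-1}\delta$ on average, and reoptimising $\rho$ yields the second threshold. The main obstacle I anticipate is technical rather than conceptual: ensuring that the exceptional null set of parameters can be chosen uniformly over all $(\lambda,\delta)$ in the claimed range (which requires summability of probabilities along the dyadic net, hence the $\epsilon$ loss in the thresholds), and keeping track of the several $\lambda^\epsilon$ factors so that they can be absorbed into the generic constants $\lesssim_{\beta,\epsilon}$ without contaminating the main power of $\delta$.
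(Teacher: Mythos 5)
Your strategy (decouple the shell into caps of angular width $\rho$, bound each cap by Bernstein in terms of its lattice-point count $N_\alpha$, and control $N_\alpha$ generically) is genuinely different from the paper's proof, and as sketched it has two serious gaps. First, the quantitative outcome is wrong even if every step works: from $\ell^2$-decoupling at cap scale $\rho=\lambda^{-1/2}$ together with $\|P_\alpha f\|_{L^p}\leq N_\alpha^{\frac12-\frac1p}\|P_\alpha f\|_{L^2}$ and the expected count $N_\alpha\sim\lambda^{\frac{d-1}{2}}\delta$, one gets $\|P_{\lambda,\delta}\|_{L^2\to L^p}\lesssim_\epsilon\lambda^{\frac{d-1}{2}-\frac dp+\epsilon}\,\delta^{\frac12-\frac1p}$, which loses a factor $\delta^{-1/p}$ against the claimed bound $\lambda^{\frac{d-1}{2}-\frac dp}\delta^{1/2}$ (and this loss is a positive power of $\lambda$ near the stated thresholds). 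The extra $\delta^{1/p}$ in the theorem is not produced by counting: in the paper it comes from writing $P_{\lambda,\delta}$ as a time integral of the Schr\"odinger propagator and observing that the piece $|t|\lesssim\lambda^{-1}$ is literally $\delta$ times a Sogge-type projector $P''_{\lambda,1}$, whose $L^{p'}\to L^p$ norm is $\lambda^{d-1-\frac{2d}{p}}$; genericity is only needed to show that the complementary piece $|t|\gtrsim\lambda^{-1}$ is an error term, via interpolation between the decoupling bound at $p_{ST}$ (Theorem~\ref{thmpST}) and an $L^1\to L^\infty$ bound $\lambda^{\frac d2+\epsilon}$ expressing square-root cancellation of the Weyl sum in $L^1_tL^\infty_x$ (Lemmas~\ref{lemsrc1} and~\ref{lemsrc2}).

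Second, your genericity input is much stronger than what a second-moment argument can deliver. You need $N_\alpha\lesssim\lambda^\epsilon(\lambda^{d-1}\delta\rho^{d-1}+1)$ \emph{simultaneously for every cap} and \emph{uniformly over all} $\lambda$ in the range (not just a dyadic net: within one dyadic block of $\lambda$ the shell changes completely on scale $\delta$, so there are $\sim\lambda/\delta$ essentially distinct shells). A variance bound plus Chebyshev controls one fixed cap and one fixed shell with failure probability that is at best polynomially small, whereas the union bound over $\rho^{-(d-1)}$ caps and $\lambda/\delta$ shells per dyadic block requires much smaller failure probabilities; Borel--Cantelli cannot close this. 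The paper sidesteps the issue because the quantities it feeds into Borel--Cantelli are already integrals (over $t$, or over $\lambda$ and $\beta$), never suprema over exponentially many events; the pointwise-in-$t$ information is supplied deterministically by the Weyl/Dirichlet major--minor arc bounds, and only the measure of time spent on major arcs is averaged over $\beta$. If you want to pursue a counting route, note that an "all caps, all $\lambda$" bound of the type you assume is essentially the (open) sharp lattice-point conjecture discussed in Section~1 and is strictly harder than what the paper's Theorem~\ref{thm:generic-L-infty} achieves by moment methods.
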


 In the particularly well-behaved case when \(p=\infty\) and we consider generic diagonal tori, the theorem matches the classical result of Jarn\'ik~\cite{Jarnik28} mentioned in the first bullet point in Section~\ref{sec:known-L-infty}, which even promotes the upper bound in the theorem to an asymptotic in that case.

The proof of this theorem will be given in Section~\ref{PTHA}. The idea of this proof is to first express the spectral projector through the Schr\"odinger group. {}{}{First note that the operator $\chi\left(\frac{ \sqrt{-\mathcal{Q}(\nabla)} - \lambda}{\delta}\right)$ can also be written $\chi \left(\frac{ -\mathcal{Q}(\nabla) - \lambda^2}{\lambda \delta} \right)$ by adapting the compactly supported function $\chi$. This in turn can be expressed as
$$
P_{\lambda,\delta} =  \lambda \delta \int_{\mathbb{R}} \widehat{\chi}(\lambda \delta t) e^{-2 \pi i\lambda^2 t} e^{-2\pi i t \mathcal{Q}(\nabla)} \,dt
$$}
and then split off into two pieces{}{}{, corresponding to $|t| \lesssim \lambda^{-1}$ and $|t| \gtrsim \lambda^{-1}$ respectively}
\begin{align*}
P_{\lambda,\delta} & = \int \lambda \delta \widehat{\chi}(\lambda t) \widehat{\chi}(\lambda \delta t) e^{-2\pi i\lambda^2 t} e^{-2\pi i t \mathcal{Q}(\nabla)} \,dt +  \int \lambda \delta [1-\widehat{\chi}(\lambda t)] \widehat{\chi}(\lambda \delta t) e^{-2\pi i\lambda^2 t} e^{-2\pi i t \mathcal{Q}(\nabla)} \,dt \\
& = P_{\lambda,\delta}^{\operatorname{small}} + P_{\lambda,\delta}^{\operatorname{large}}.
\end{align*}

It is easy to see that the operator $P_{\lambda,\delta}^{\operatorname{small}}$ can be written {}{}{in} the form $\delta P_{\lambda,1}$ (after adjusting the cutoff function); in other words, this corresponds to the case $\lambda =1$, to which the universal bounds of Sogge apply.

Turning to the term $P_{\lambda,\delta}^{\operatorname{large}}$, its operator norm will be obtained through interpolation between
\begin{itemize}
\item A bound $L^{p_{ST}'} \to L^{p_{ST}}$, Theorem~\ref{thmpST} below. As noted in~\cite{GM}, is a direct consequence of $\ell^2$ decoupling (and valid for any torus).
\item A bound $L^1 \to L^\infty$, for which genericity will be used. Namely, we will prove in Section~\ref{BoWs} that, generically in $(\beta_{ij})$,
\begin{equation}\label{eq:minorarcs}
\int_{1/N}^T \| \chi ( N^{-2} \Delta) e^{it\Delta} \|_{L^1 \to L^\infty} \,dt \lesssim_{\beta,\epsilon} T N^{\frac{d}{2}+\epsilon}.
\end{equation}
\end{itemize}

One could think of \eqref{eq:minorarcs} as square-root cancellation in $L^1 L^\infty$ since $\| \chi ( N^{-2} \Delta) \|_{L^1 \to L^\infty} \sim N^d$. One could also see this as a minor-arc type bound in the spirit of the circle method; indeed in the case \(p=\infty\) the proof in effect reduces to an application of the Davenport-Heilbronn circle method.

\subsection{An elementary approach for \(p=\infty\) and  \(\delta\) very small}

When \(\delta\) is small enough a more elementary counting argument can be used. 



Our main result there is Theorem~\ref{thm:generic-L-infty} below. {}{}{We first state three particularly simple  \(L^1\to L^\infty\) bounds proved at the start of Section~\ref{sec:geomofnumthm} since they are particularly simple; we will then mention consequences for  \(L^1\to L^p\) bounds.}
\begin{thm}
\label{thmgeometryofnumbers}
For generic tori, and also for generic rectangular tori, the following holds.
If \(\delta < \lambda^{1-2d-\epsilon}\), then
\begin{equation}\label{eq:thm_delta_very_small}
	\lVert P_{\lambda,\delta} \rVert_{L^1\to L^\infty} \lesssim_{\beta,\epsilon} 1.
\end{equation}
If instead  \(a\in\Z\) with \( d\leq a\leq 2d\)  and \(\lambda^{-a}\leq \delta\leq \lambda^{1-a}\), then
\begin{equation}\label{eq:thm_blunt_edge}
	\lVert P_{\lambda,\delta} \rVert_{L^1\to L^\infty} \lesssim_{\beta,\epsilon }
	{\delta^{1-\frac {a+1} {d+a+1}} \lambda^{ d -1+\frac{a+1}{d+a+1}+ \epsilon}}.
\end{equation}
Finally  if \(a\in\Z\) with \( \frac{d-1}{2}\leq a<d\) and \(\lambda^{-a}\leq \delta\leq \lambda^{1-a}\), we have
\begin{equation}\label{eq:thm_blunt_main}
	\lVert P_{\lambda,\delta} \rVert_{L^1\to L^\infty} \lesssim_{\beta,\epsilon }
	\delta^{1- \frac 1 {d+1-a}} \lambda^{d - 1 + {\frac{1}{d+1-a}} + \epsilon}.
\end{equation}
\end{thm}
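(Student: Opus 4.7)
Choosing $\chi=\mathbf{1}_{[-1,1]}$ and up to a universal constant, $\lVert P_{\lambda,\delta}\rVert_{L^1\to L^\infty}$ equals $N(\lambda,\delta,\beta):=\#\{n\in\mathbb{Z}^d:|\mathcal{Q}_\beta(n)-\lambda^2|\leq 2\lambda\delta\}$. Writing $\beta_{ij}=\delta_{ij}+h_{ij}$ (or $\beta_i=1+h_i$ for the rectangular case), each fixed $n$ with $|n|\asymp\lambda$ imposes a strip condition on $h$: the defining affine function has gradient of norm $\asymp\lambda^2$, so the strip has Lebesgue measure $\asymp \delta/\lambda$ in the parameter cube.

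\medskip

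The key estimate is the moment bound
\[
\int N(\lambda,\delta,\beta)^K\,dh\ \lesssim_K\ \lambda^{d-1}\delta\ +\ \lambda^{(d-1)K}\delta^{K}\qquad (K\geq 1),
\]
obtained by expanding $N^K=\sum_{(n_1,\dots,n_K)}\prod_j\mathbf{1}[n_j\in\text{shell}]$ and classifying $K$-tuples by the rank of the linear functionals $h\mapsto\mathcal{Q}_\beta(n_j)-|n_j|^2$ in the parameter space. The diagonal stratum ($n_j$ all equal up to sign) yields the first term; the generic stratum, where the $K$ functionals are linearly independent in the parameter space of dimension $d$ or $d(d+1)/2$, yields $\lambda^{dK}\cdot(\delta/\lambda)^K=\lambda^{(d-1)K}\delta^K$. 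Intermediate strata have to be shown negligible; for $K=2$ this is essentially automatic since $n_1\otimes n_1\parallel n_2\otimes n_2$ forces $n_1=\pm n_2$ over $\mathbb{Z}$.

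\medskip

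Chebyshev's inequality then gives $\mes(N\geq M)\lesssim M^{-K}\int N^K\,dh$, and I conclude by Borel--Cantelli over dyadic $(\lambda,\delta)$ in the relevant range (the union over the $O(\log\lambda)$ dyadic values of $\delta$ at fixed $\lambda$ costs only a harmless log factor). For \eqref{eq:thm_delta_very_small} I take $K=2$, $M=2$: at the largest admissible $\delta=\lambda^{1-2d-\epsilon}$ the failure measure is $\lesssim\lambda^{-d-\epsilon}+\lambda^{-2d-2\epsilon}$, summable. For \eqref{eq:thm_blunt_edge} with $a\in[d,2d]$ already $K=1$ (Markov) suffices: inserting the claimed $M$ yields failure measure $\lesssim\lambda^{-(a+1)^2/(d+a+1)-\epsilon}$, whose exponent is $<-1$ precisely because $(a+1)^2>d+a+1$ on this range. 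For \eqref{eq:thm_blunt_main} with $a\in[(d-1)/2,d)$ I use $K=2$; setting $\alpha=(a+1)/(d+1-a)$, the summability of the two terms of the moment bound reduces to $(d-1-a)+2\alpha>1$ and $2\alpha>1$, both of which hold throughout the allowed range of $a$.

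\medskip

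The main obstacle I anticipate is the moment estimate itself, specifically the treatment of the intermediate-rank strata. For the three bounds above only $K\leq 2$ is needed, and this reduces to the trivial algebraic fact about parallel outer products. Were one to push to $K\geq 3$ (to try to extend bound \eqref{eq:thm_blunt_main} to smaller $a$), one would have to bound, for each $r<K$, the number of $K$-tuples of integer vectors of size $\asymp\lambda$ whose outer products span only an $r$-dimensional subspace of the symmetric matrices; this is an elementary but delicate geometry-of-numbers exercise, and this is precisely where the rectangular case (parameter dimension $d$) and the fully generic case (dimension $d(d+1)/2$) would start to behave differently.
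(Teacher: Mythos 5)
Your overall architecture (moments in the torus parameters, Chebyshev, Borel--Cantelli) is the same as the paper's, but two steps break down. The first is the quantifier over $\lambda$: your moment bound is at fixed $\lambda$ and your Borel--Cantelli runs over \emph{dyadic} $\lambda$ only, whereas the theorem asserts a bound for every real $\lambda$. Shells at values of $\lambda$ differing by more than $\delta$ are essentially independent events, so covering one dyadic block requires a union over $\asymp\lambda/\delta$ shells --- equivalently one must bound $\int_h\sup_{\lambda\sim\lambda_0}N^K\,dh\lesssim\delta^{-1}\int_h\int N^K\,d\lambda\,dh$, which is exactly the origin of the $\delta^{-1/b}$ prefactor in the paper's bound \eqref{eq:thm-sharp}. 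Once that factor is paid your numerology collapses: for \eqref{eq:thm_blunt_edge} the $K=1$ failure measure becomes $\gtrsim\lambda^{(a+1)d/(d+a+1)}\gg 1$, and the paper in fact needs moments of order $b=d+a+1\geq 2d+1$ in that range; for \eqref{eq:thm_blunt_main} it needs $b=d+1-a$, which exceeds $2$ for all $a<d-1$. For \eqref{eq:thm_delta_very_small} the obstruction is the diagonal term $\lambda^{d-1}\delta$ of $\int N^2$, which after the sup-conversion becomes $\lambda^{d}$ and is genuinely present (every lattice point lies in \emph{some} shell); the paper circumvents this by recasting the event as a $\lambda$-free statement about off-diagonal pairs $x^{(1)},x^{(2)}$ with $|\mathcal{Q}(x^{(1)})-\mathcal{Q}(x^{(2)})|\lesssim\delta\lambda$ and bounding the expected number of such pairs.

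The second gap is the moment estimate itself. The measure of the intersection of $K$ strips is governed not by the \emph{rank} of the associated functionals but by their quantitative near-degeneracy (the singular values of the $K\times(\text{parameter})$ matrix): two strips whose normals $(n_i^2)_i$, $(m_i^2)_i$ are linearly independent but nearly parallel intersect in a set of measure far larger than $(\delta/\lambda)^2$, and such pairs are plentiful. Controlling their contribution requires counting pairs according to the dyadic size of the minors $n_i^2m_j^2-n_j^2m_i^2$; for $K=2$ this is the claim $\#\{y: 0<\max|y_{ij}|\leq Y\}\lesssim_\epsilon Y^{d+\epsilon}$ in the paper's deduction of \eqref{eq:thm_delta_very_small}, and for general $K$ it is Problem~\ref{countingmatricesproblem} together with Lemmas~\ref{lem:vol} and~\ref{lem:vol_I}, which form the technical core of the argument. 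Declaring the intermediate strata ``essentially automatic'' conflates exact with approximate linear dependence, and this is precisely the difficulty the paper's machinery exists to handle.
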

{}{}{
We remark that for \(\delta \leq \lambda^{-1}\), interpolation with the \(L^{p_{ST}'}\to L^{p_{ST}}\) bound from~\cite{GM} gives \[\lVert P_{\lambda,\delta} \rVert_{L^2\to L^p} \lesssim_{\beta,\epsilon}  \lambda^\epsilon( \lVert P_{\lambda,\delta} \rVert_{L^1\to L_\infty})^{\frac{1}{2}-\frac{p_{ST}}{2p}}.\]This would always fall short of the conjecture~\eqref{conjecturegeneric} for \(p<\infty\), even with an optimal \(L^1\to L_\infty\) bound.}
We highlight a few features of these bounds.

\begin{itemize}
	\item In the setting of Theorem~\ref{thmgeometryofnumbers},  conjecture~\eqref{conjecturegeneric} would give \(\lVert P_{\lambda,\delta} \rVert_{L^1\to L^\infty}\lesssim_{\beta,\epsilon} 1+\delta \lambda^{d-1+\epsilon}\).
	\item Although \eqref{eq:thm_blunt_edge} and \eqref{eq:thm_blunt_main} do not recover \eqref{conjecturegeneric}, they do improve on the best known bounds for \(N(\lambda-\delta)-N(\lambda+\delta)\) coming from the results listed in Section~\ref{sec:known-L-infty}.
	\item  Both \eqref{eq:thm_blunt_main} and \eqref{eq:thm_blunt_edge} are special cases of the stronger estimate \eqref{eq:thm-sharp} below, while \eqref{eq:thm_delta_very_small} has a  short self-contained proof.
	\item We restrict to \(a>\frac{d}{2}-1\) in \eqref{eq:thm_blunt_main} and to \(a\leq 2d\) in \eqref{eq:thm_blunt_main} solely becuase the remaining range is already covered by Theorem~\ref{thmharmonicanalysis} or by \eqref{eq:thm_delta_very_small}, see also \eqref{eq:thm-blunt} below.
	\item When \(a=d\) the bound \eqref{eq:thm_blunt_main} would be trivial, and hence for \(a\geq d\) the bound \eqref{eq:thm_blunt_edge} takes over.
\end{itemize}

In the proof of Theorem~\ref{thm:generic-L-infty} we will use the Borel-Cantelli Lemma to reduce to estimates for moments of \(\|P_{\lambda,\delta}\|\), where the moments are taken over \(\lambda\) and \(\beta\). {}{}{A short computation reduces this to the following problem.
\begin{prob}\label{countingmatricesproblem}
Estimate (from above, or asymptotically) the number of matrices of the form
$$
P=
\left(
\begin{matrix}
m^2_{11}&\cdots&m^2_{1b}\\
\vdots&\ddots&\vdots\\
m^2_{d1}&\cdots&m^2_{db}\\
\lambda^2&\cdots&\lambda^2
\end{matrix}
\right),
$$
where the \(m_{ij}\) are integers, all entries in each row lie in a specified dyadic range, and also for each \(k\) the maximal \(k\times k\) subdeterminant of \(P\) lies in a specified dyadic range.
\end{prob}
In section~\ref{cmwps} we give an upper bound in this counting problem using what is in effect linear algebra, relying on the rather technical Lemma~\ref{lem:vol_I} below.} We are then left with a maximum over all possible choices of the various dyadic ranges, and estimating this maximum will be the most challenging part of the proof.

The bound from Lemma~\ref{lem:vol_I} could  be improved. See Remark~\ref{rem:not-optimal} for one path. Another route concerns the case when all \(\beta_{ij}\) are generic, that is the case of generic tori as opposed to generic rectangular tori. Then one could expand \(P\); in place of the squares \(m_{1i}^2,\dotsc,m_{di}^2\) the \(i\)th column would contain all degree 2 monomials in \(m_{1i},\dotsc,m_{di}\). This should allow a smaller bound.

\section{Notations}

\label{sec:notation}

We adopt the following normalizations for the Fourier series on $\mathbb{T}^d$ and Fourier transform on $\mathbb{R}^d$, respectively:
{}{}{
\begin{align*}
& f(x) = \sum_{k \in \mathbb{Z}^d} \widehat{f}_k e^{2\pi i k \cdot x}, \qquad \qquad \widehat{f}_k = \int_{\mathbb{T}^d} f(x) e^{-2\pi i k \cdot x} \,dx, \\
& \widehat{f}(\xi) = \int_{\mathbb{R}^d} f(x) e^{-2\pi ix \cdot \xi} \,dx, \qquad \qquad f(x) = \int_{\mathbb{R}^d} \widehat{f}(\xi) e^{2\pi ix \cdot \xi} \,dx.
\end{align*}
}
With this normalization,
$$
{}{}{\widehat{fg} = \widehat{f} * \widehat{g} \qquad \mbox{and} \qquad \widehat{f * g} = \widehat{f} \widehat{g},}
$$
and {}{}{the} Parseval and Plancherel theorems, respectively, are given by
$$
{}{}{\| f \|_{L^2(\mathbb{T}^d)} = \| \widehat{f} \|_{\ell^2 (\mathbb{Z}^d)} \qquad \mbox{and} \qquad \| f \|_{L^2(\mathbb{R}^d)} = \| \widehat{f} \|_{L^2 (\mathbb{R}^d)}.}
$$

{}{}
{The operator $m(\sqrt{-\mathcal{Q}(\nabla)})$ can be expressed as a Fourier multiplier
$$
m(\sqrt{-\mathcal{Q}(\nabla)}) f = \sum_k m(\sqrt{\mathcal{Q}(k)}) \widehat{f}_k e^{2\pi i k \cdot x},
$$
or through a convolution kernel
$$
m(\sqrt{-\mathcal{Q}(\nabla)}) f (x) = \int K(x-y) f(y) \,dy, \qquad \mbox{with} \qquad K(z) = \sum_k m(\sqrt{\mathcal{Q}(k)}) e^{2\pi i k \cdot z}.
$$
}

In {}{}{Sections}~\ref{sec:linalg} and~\ref{sec:geomofnumthm} we will often join together several matrices \(A_1,\dotsc,A_n\) with the same number of rows to make a larger matrix, for which we use the notation \((A_1\mid\dotsb\mid A_n)\). We view column vectors as matrices with one column, so that \((A\mid \vec{v})\) is the matrix \(A\) with the vector \(\vec{v}\) added as an extra column on the right.

{}{}{Also in Sections~\ref{sec:linalg} and~\ref{sec:geomofnumthm} we use the following notation relating to subdeterminants. If $k \leq \min (p,q)$ and $M$ is a matrix in $\R^{p\times q}$, we will denote $D_k(M)$ the maximum absolute value of a $k\times k$ subdeterminant of $M$:
$$
D_k(M)
= 
\max_{\substack{ \I\subset \{1,\dotsc, p\} \\ \J \subset \{1,\dotsc,q\} \\ \#\I =\#\J=k }}
\big \lvert \det \, (M_{ij})_{i\in\I,j\in \J}\big\rvert.
$$}
{}{}{We further define $D_0(M)=1$ for ease of notation, and we let $D^{(\ell)}_k(M)$ denote the maximal subdeterminant, when the matrix $M$ is restricted to its first $\ell$ columns:}{}{}{
$$
D^{(\ell)}_k(M)
= 
\max_{\substack{ \I\subset \{1,\dotsc, p\} \\ \J \subset \{1,\dotsc,\ell\} \\ \#\I =\#\J=k }}
\big \lvert \det\, (M_{ij})_{i\in\I,j\in \J}\big\rvert
$$}

Given two quantities $A$ and $B$, we denote $A \lesssim B$ if there exists a constant $C$ such that $A \leq CB$, {}{}{and $A \sim B$ if $A \lesssim B$ and $B \lesssim A$}. If the implicit constant $C$ is allowed to depend on $a,b,c$, the notation becomes $A \lesssim_{a,b,c} B$. In the following, it will often be the case that the implicit constant will depend on $\beta$, and on an arbitrarily small power of $\lambda$, for instance $A \lesssim_{\beta,\epsilon} \lambda^\epsilon B$. When this is clear from the context, we simply write $A \lesssim \lambda^\epsilon B$. {}{}{Implicit constants in this notation may always depend on the dimension \(d\) of the torus that is the object of our study.}

{}{}{Finally, the Lebesgue measure of a set $E$ is denoted $\mes E$.}

\section{Bounds on Weyl sums}

\label{BoWs}

Consider the smoothly truncated Weyl sum (or regularized fundamental solution for the anisotropic Schr\"odinger equation)
$$
K_N(t,x) = \sum_{k\in\mathbb{Z}^d}e^{- 2\pi i(x\cdot k+t \mathcal{Q}(k))}\phi\bigg(\frac{k_1}{N}\bigg) \dots \phi\bigg(\frac{k_d}{N}\bigg)
$$
where $\phi$ is a smooth cutoff function supported on $[-1,1]$, equal to $1$ on $[-\frac{1}{2},\frac{1}{2}]$. 

In dimension one, it becomes
$$
K^{(1)}_N(t,y) = \sum_{k\in\mathbb{Z}}e^{2\pi i(yk+k^2t)}\phi\bigg(\frac{k}{N}\bigg).
$$

\subsection{Bound for small time} For small $t$, the following bound holds on any torus.

\begin{lem}\label{dispersive0}
For any {}{}{non-degenerate} quadratic form $\mathcal{Q}$, if $|t| \lesssim \frac{1}{N}$,
$$
|K_N(t,x)| \lesssim \left( \frac{N}{tN + \frac{1}{N}} \right)^{d/2} Z \left( \frac{|x|}{tN + \frac{1}{N}} \right),
$$
{}{}{where $Z$ decays super-polynomially}.
\end{lem}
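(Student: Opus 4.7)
My plan is to prove Lemma~\ref{dispersive0} via Poisson summation, which reduces the discrete sum on $\mathbb{Z}^d$ to a sum of Euclidean oscillatory integrals, each of which I then control by stationary/non-stationary phase.

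\textbf{Step 1 (Poisson summation).} Setting $h(\xi) := \phi(\xi_1/N)\cdots\phi(\xi_d/N)\, e^{-2\pi i(x\cdot\xi + t\mathcal{Q}(\xi))}$, which is Schwartz on $\mathbb{R}^d$, the Poisson summation formula gives
\[
K_N(t,x) = \sum_{k\in\mathbb{Z}^d} h(k) = \sum_{n\in\mathbb{Z}^d} I(x+n, t), \qquad I(y,t) := \int_{\mathbb{R}^d} \phi(\xi_1/N)\cdots\phi(\xi_d/N)\, e^{-2\pi i(y\cdot\xi + t\mathcal{Q}(\xi))}\,d\xi,
\]
using the Fourier convention of Section~\ref{sec:notation}. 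By periodicity I may assume $x \in [-1/2, 1/2]^d$.

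\textbf{Step 2 (Euclidean pointwise bound on $I$).} Writing $R := tN + 1/N$, I aim to show that for some super-polynomially decaying $Z_0$,
\[
|I(y,t)| \lesssim (N/R)^{d/2}\, Z_0(|y|/R).
\]
After the change of variable $\xi = N\eta$, and setting $a := tN^2 \in [0, O(N)]$,
\[
I(y,t) = N^d \int \phi(\eta_1)\cdots\phi(\eta_d)\, e^{-2\pi i \Phi(\eta)}\,d\eta, \qquad \Phi(\eta) := Ny\cdot\eta + a \mathcal{Q}(\eta).
\]
If $a \lesssim 1$ (so $R \sim 1/N$), the factor $e^{-2\pi i a \mathcal{Q}(\eta)}$ has $C^M$-norms bounded uniformly in $a$ on the support, so $I$ is (up to rescaling) the Fourier transform at $Ny$ of a Schwartz-class amplitude, giving $|I(y,t)| \lesssim_M N^d (1 + N|y|)^{-M}$. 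If $a \gtrsim 1$ (so $R \sim tN$), then $\nabla \Phi = Ny + 2aB\eta$ (with $B$ the matrix of $\mathcal{Q}$) vanishes at $\eta^* = -(2a)^{-1} B^{-1}(Ny)$. In the stationary regime $|y| \lesssim tN$, classical stationary phase yields $|I(y,t)| \lesssim N^d a^{-d/2} = t^{-d/2}$; for $|y| \gg tN$ one has $|\nabla\Phi| \gtrsim N|y|$ on $\operatorname{supp}\phi$, and repeated integration by parts against $\nabla\Phi/|\nabla\Phi|^2$ gives $|I(y,t)| \lesssim_M N^d (N|y|)^{-M}$. Comparing all these bounds with $(N/R)^{d/2}$ produces the claimed pointwise bound on $I$ with super-polynomial decay in $|y|/R$.

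\textbf{Step 3 (Summing over $n$).} Since $R \leq 2$ for $|t|\lesssim 1/N$, and $|x+n| \geq |n| - 1/2 \geq |n|/2$ for $n \neq 0$ when $x \in [-1/2, 1/2]^d$, the super-polynomial decay of $Z_0$ yields $\sum_{n\neq 0} Z_0(|x+n|/R) \lesssim 1$ uniformly in $x$ and $R$. Combined with the $n = 0$ term $Z_0(|x|/R)$, this gives $\sum_n Z_0(|x+n|/R) \lesssim Z(|x|/R)$ for a suitable super-polynomially decaying $Z$ (taking into account that at most $O(1)$ values of $n$ are nearly as close to $-x$ as $n=0$), which combined with Step~2 completes the proof.

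\textbf{Main obstacle.} The technical heart is the stationary-phase analysis in Step~2 when $a \gtrsim 1$ and $|y| \sim tN$, where the stationary point $\eta^*$ approaches the boundary of $\operatorname{supp}\phi$: here one must smoothly interpolate between the leading stationary-phase contribution and the non-stationary IBP bound, producing Schwartz-type decay in $|y|/R$ uniformly in the parameter $a \in [1, O(N)]$. This is standard but requires careful amplitude bookkeeping when differentiating the IBP kernel $\nabla\Phi/|\nabla\Phi|^2$, noting that $|\nabla^2\Phi|\lesssim a$ combines with $|\nabla\Phi|\gtrsim N|y|\gtrsim a$ so that each IBP effectively gains a factor $(N|y|)^{-1}$.
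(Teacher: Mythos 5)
Your argument is correct and is exactly the route the paper takes: its entire proof reads ``This is immediate on applying Poisson summation followed by stationary phase,'' and your Steps 1--3 are a faithful, correctly scaled expansion of that one line (including the right treatment of the two regimes $tN^2\lesssim 1$ and $tN^2\gtrsim 1$, and the harmless summation over the dual lattice). No gaps.
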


\begin{proof} This is immediate on applying Poisson summation followed by stationary phase. \end{proof}

\subsection{The one-dimensional case}

As in{}{}{, for example,} equation (8) of Bourgain-Demeter~\cite{BourgainDemeter1}, we have:

\begin{lem}[Weyl bound] \label{weyl0} 
If $a\in\mathbb{Z} \setminus \{0\}$, $q \in \{ 1,\dots, N \}$, $(a,q)=1$ and $\left|t-\frac{a}{q}\right|\leq \frac{1}{q N}$:
\begin{equation}\label{weyl} \forall y \in \mathbb{R}, \qquad \left| K^{(1)}_N(t,y) \right| \lesssim_\epsilon \frac{N^{1+\epsilon}}{\sqrt{q}( 1 + N | t - \frac{a}{q} |^{1/2})} \lesssim \frac{N^{1+\epsilon}}{\sqrt{q}}.
\end{equation} 
\end{lem}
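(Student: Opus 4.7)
The plan is to use the classical Weyl/Poisson-summation approach.  Writing $\beta = t - a/q$ so that $|\beta| \leq 1/(qN)$, the first step is to split the variable of summation as $k = qm + r$ with $0 \leq r < q$ and $m\in\mathbb{Z}$.  Since $k^2 a/q \equiv r^2 a/q \pmod 1$, the Weyl sum separates as
$$K_N^{(1)}(t,y) = \sum_{r=0}^{q-1} e^{2\pi i a r^2/q} \sum_{m \in \mathbb{Z}} e^{2\pi i[y(qm+r) + \beta(qm+r)^2]} \phi\Big(\tfrac{qm+r}{N}\Big).$$
The arithmetic information—the $r^2 a/q$ phase—has been isolated; what remains is a smooth sum over a slowly varying quadratic phase.

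Next I would apply Poisson summation to the inner sum in $m$, followed by the change of variables $u = (qm+r)/N$.  The $r$-dependence in the dual variable $\ell$ then decouples into a factor $e^{2\pi i \ell r/q}$, and summing over $r$ produces the complete Gauss sum
$$S(a,\ell;q) = \sum_{r=0}^{q-1} e^{2\pi i(ar^2 + \ell r)/q}, \qquad |S(a,\ell;q)| \lesssim \sqrt{q},$$
the last bound being standard since $(a,q)=1$.  This yields
$$K_N^{(1)}(t,y) = \frac{N}{q} \sum_{\ell \in \mathbb{Z}} S(a,\ell;q)\, I(\ell), \qquad I(\ell) = \int_{\mathbb{R}} e^{2\pi i[(yN - \ell N/q)u + N^2\beta u^2]} \phi(u)\,du.$$

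The crucial remaining step is to analyze $I(\ell)$ in two regimes.  When $N^2|\beta| \lesssim 1$, the quadratic part of the phase is essentially constant, so $I(\ell) \approx \widehat{\phi}(\ell N/q - yN)$, which decays super-polynomially in $|\ell - yq|$; the sum over $\ell$ contributes $O(1)$, giving $|K_N^{(1)}| \lesssim N/\sqrt{q}$.  When $N^2|\beta| \gg 1$, van der Corput's lemma (second derivative $= 4\pi N^2\beta$) gives $|I(\ell)| \lesssim (N^2|\beta|)^{-1/2}$, while the stationary point $u^* = (\ell/q - y)/(2N\beta)$ lies in $\operatorname{supp}\phi = [-1,1]$ only for $|\ell - yq| \leq 2 Nq|\beta|= O(1)$ thanks to $|\beta|\leq 1/(qN)$; integration by parts handles the non-stationary $\ell$.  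Combining both regimes yields the target bound $|K_N^{(1)}(t,y)| \lesssim N/(\sqrt{q}(1+N|\beta|^{1/2}))$.

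The main technical nuisance is treating the transition $N^2|\beta|\sim 1$ and the case where the stationary point falls near the boundary of $\operatorname{supp}\phi$; both are standard but fiddly, and the $N^\epsilon$ margin in the statement gives ample room to avoid the most delicate versions of stationary phase.  Otherwise the proof is almost entirely mechanical once the Poisson-summation decomposition is in place.
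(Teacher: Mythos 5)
Your argument is correct, and it is the standard proof of this estimate; the paper itself does not prove the lemma but simply cites it (equation (8) of Bourgain--Demeter), and your Poisson-summation/Gauss-sum/stationary-phase derivation is essentially the argument underlying that reference. Two small points worth noting: the Gauss sum bound $|S(a,\ell;q)|\leq\sqrt{2q}$ for $(a,q)=1$ holds for all $q$ (it can vanish for some $\ell$ when $q$ is even, which only helps), and because the cutoff $\phi$ is smooth the non-stationary $\ell$ contribute a rapidly convergent tail, so your argument in fact yields the bound with no $N^\epsilon$ loss at all --- the $\epsilon$ in the statement is harmless slack inherited from the reference.
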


We now define a decomposition into major and minor arcs: for $Q$ a power of 2, $c_0$ a constant which will be chosen small enough, and $Q \leq c_0 N$,
\begin{align*}
& \Lambda_Q(t) = \sum_{\substack{q \in \mathbb{N} \\ \frac{1}{2} Q \leq q < Q}} \sum_{\substack{a \in \mathbb{Z}^* \\ (a,q) = 1}} \mathbf{1}_{[-1,1]} \left( NQ\left( t-\frac{a}{q} \right)\right) \\
\end{align*}
{}{}{In the definition of $\Lambda_Q$, the integer $a$ is not allowed to be zero; it turns out to be convenient to single out the case $a=0$, by letting
$$
\Lambda_0(t) = \mathbf{1}_{[-\frac{1}{N},\frac{1}{N}]}.
$$}

{}{}{Observe that functions of the type $ \mathbf{1}_{[-1,1]} \left( NQ\left( t-\frac{a}{q} \right)\right)$, with $\frac{1}{2} Q_1 \leq q_1 \leq Q_1 \leq c_0 N$ and $\frac{1}{2} Q_2 \leq q_2 \leq Q_2 \leq c_0 N$ are disjoint if $(a_1,q_1) = (a_2,q_2) =1$ and $\frac{a_1}{q_1} \neq \frac{a_2}{q_2}$. Indeed, if $Q_1 \geq Q_2$,
$$
\left| \frac{a_1}{q_1} - \frac{a_2}{q_2} \right| \geq \frac{1}{q_1 q_2} \geq \frac{1}{Q_1 Q_2} \geq \frac{1}{c_0 N Q_2} \geq \frac{2}{NQ_2} \geq \frac{1}{NQ_1} + \frac{1}{NQ_2}.
$$
Therefore, $\Lambda_0 + \sum_{\substack{Q \in 2^\mathbb{N} \\ Q < c_0 N}} \Lambda_Q$ is the characteristic function of a set: the major acs.}

{}{}{The minor arcs will be the complement, with characteristic function $\rho$. This gives the decomposition, for any $t \in \mathbb{R}$,
\begin{equation}
\label{decomposition}
1 = {}{}{\Lambda_0(t)+}\sum_{\substack{Q \in 2^\mathbb{N} \\ Q < c_0 N}} \Lambda_Q(t) + \rho(t).
\end{equation}}

On the support of each of the summands above, the following bounds are available
\begin{itemize}
\item On the support of $\Lambda_0$, there holds $|t| \leq \frac{1}{N}$, and we resort to the short time bound.
\item If $t$ belongs to the support of $\Lambda_{Q}$, there exists $q \in [\frac{Q}{2},Q]$ such that $|t-\frac{a}{q}| < \frac{1}{qN}$ and Weyl's bound gives $\displaystyle  |K_N^{(1)}(t,y)| \lesssim_\epsilon \frac{N^{1+\epsilon}}{\sqrt{Q}}$.
\item {}{}{On the support of $\rho$, by Dirichlet's approximation theorem, there exists $a \in \mathbb{Z}$ and $q \in \{1,\dots,N \}$ relatively prime such that $|t-\frac{a}{q}| < \frac{1}{qN}$. If $q \sim N$, Weyl's bound gives  $\displaystyle  |K_N^{(1)}(t,y)| \lesssim_\epsilon N^{1/2+\epsilon}$. If $q \ll N$, then, since $t$ does not belong to the support of $\sum_{Q \leq c_0 N} \Lambda_Q$, there holds $|t-\frac{a}{q}| \sim \frac{1}{qN}$, which implies again, by Weyl's bound, $\displaystyle  |K_N^{(1)}(t,y)| \lesssim_\epsilon N^{1/2+\epsilon}$.}
\end{itemize}

\subsection{The case of generic rectangular tori} {}{}{In this subsection, we assume that the tori are rectangular, or, equivalently, that the quadratic form $\mathcal{Q}$ is diagonal. First of all, we learn from the bounds on $K_N^{(1)}$ that, on the support of $\Lambda_{Q_1}(\beta_1 t) \dots \Lambda_{Q_k}(\beta_k t) \rho(\beta_{k+1} t) \dots \rho(\beta_d t)$, with all $Q_i \in 2^\mathbb{N}$ and $Q_i \leq c_0 N$,
\begin{equation}
\label{bound1*}
|K_N(t,x)| = |K_N^{(1)}(\beta_1t,x) \dots K_N^{(1)}(\beta_dt,x)|  \lesssim_\epsilon \frac{N^{\frac{k+d}{2}+\epsilon}}{\sqrt{Q_1 \dots Q_k}}.
\end{equation}}

\begin{lem}
\label{generic} Let $\kappa, \epsilon>0$; then for generic $\beta_1, \dots, \beta_d \in [1,2]^d$ there holds, for $1 \leq k \leq d$, $0\leq T \leq N^\kappa$, and $\epsilon>0$, and for all $Q_i, N$ equal to powers of $2$, with $Q_i \leq N$,
\begin{align*}
& \int_0^T \Lambda_{Q_1}(\beta_1 t) \Lambda_{Q_2}(\beta_2 t) \dots \Lambda_{Q_k}(\beta_k t)\,dt \lesssim_{\epsilon,\kappa,\beta_1 \dots \beta_d} N^\epsilon \frac{Q_1 \dots Q_k}{N^k} T.\end{align*}
\end{lem}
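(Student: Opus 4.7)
The plan is to combine Markov's inequality with a Borel-Cantelli argument over dyadic parameters, exploiting the key structural point that each factor \(\Lambda_{Q_i}(\beta_i t)\) depends only on the single coordinate \(\beta_i\).

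First, by Fubini and the independence of the \(\beta_i\)'s,
\begin{equation*}
\int_{[1,2]^d} \int_0^T \prod_{i=1}^k \Lambda_{Q_i}(\beta_i t)\,dt\,d\beta = \int_0^T \prod_{i=1}^k G_i(t)\,dt, \qquad G_i(t) := \int_1^2 \Lambda_{Q_i}(\beta t)\,d\beta.
\end{equation*}
A standard Farey-fraction count shows \(G_i(t) \lesssim Q_i/N\) uniformly in \(t\). Indeed, substituting \(u = \beta t\) gives \(G_i(t) = \frac{1}{t}\,\mes\{u \in [t,2t] : \Lambda_{Q_i}(u) = 1\}\); the set on the right is a union of intervals of length \(\sim 1/(NQ_i)\) centred at reduced fractions \(a/q\) with \(q \sim Q_i\), \(a \ne 0\), and the number of such fractions in \([t,2t]\) is \(\lesssim tQ_i^2 + 1\), giving \(G_i(t) \lesssim Q_i/N\). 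Moreover \(G_i(t) = 0\) for \(t\) smaller than a constant times \(1/Q_i\), since then \([t,2t]\) contains no nonzero reduced fraction with denominator of size \(Q_i\). Integrating over \(t \in [0,T]\) yields
\begin{equation*}
\int_{[1,2]^d} F(\beta)\,d\beta \lesssim T \prod_{i=1}^k \frac{Q_i}{N}, \qquad F(\beta) := \int_0^T \prod_i \Lambda_{Q_i}(\beta_i t)\,dt.
\end{equation*}

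By Markov's inequality, for each dyadic tuple \((N,T,Q_1,\ldots,Q_k)\) with \(N = 2^n\), \(T\) dyadic in \([0,N^\kappa]\), and each \(Q_i\) a power of \(2\) with \(Q_i \leq N\),
\begin{equation*}
\mes\bigl\{\beta \in [1,2]^d : F(\beta) > N^\epsilon\, T\, Q_1\cdots Q_k / N^k \bigr\} \lesssim N^{-\epsilon}.
\end{equation*}
The number of admissible dyadic tuples with \(N = 2^n\) (including the range \(1 \leq k \leq d\)) is \(\lesssim n^{d+1}\), so \(\sum_n n^{d+1}\,2^{-n\epsilon} < \infty\). By Borel-Cantelli, for almost every \(\beta\) the stated bound holds for all but finitely many dyadic tuples; the finitely many exceptional tuples are absorbed into the constant \(C_{\beta,\epsilon,\kappa}\). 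Finally, a non-dyadic \(T \in [0, N^\kappa]\) is handled by monotonicity of \(F(\beta, \cdot)\), since rounding \(T\) up to the next power of \(2\) costs only a factor of \(2\).

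The main obstacle: there is essentially none of substance for this lemma, precisely because the \(\beta_i\)'s enter independently into distinct factors, reducing the concentration problem to a first-moment argument. The one point requiring care is the uniform Farey count underlying \(G_i(t) \lesssim Q_i/N\), which must cover both the sparse regime \(t \sim 1/Q_i\) (where only one or two reduced fractions \(a/q\) with \(q \sim Q_i\) meet \([t,2t]\)) and the dense regime where the number of contributing fractions grows like \(tQ_i^2\).
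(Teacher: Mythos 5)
Your proposal is correct and follows essentially the same route as the paper: a first-moment bound obtained by Fubini from the single-variable average $\int_1^2 \Lambda_Q(\beta t)\,d\beta \lesssim \frac{Q}{N}\mathbf{1}_{t\gtrsim 1/Q}$ (the paper normalizes $\beta_1=1$ and leaves one factor unaveraged, which changes nothing), followed by Markov and Borel--Cantelli over the dyadic tuples $(N,T,Q_1,\dots,Q_k)$, with the vanishing of the integrand for $t \lesssim 1/N$ keeping the tuple count at $O(n^{d+1})$ per $N=2^n$.
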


\begin{proof} Without loss of generality, we can choose $\beta_1 = 1$. {}{}{Indeed, if $\beta_1,\dots,\beta_d,\frac{T}{\gamma}$ is changed to $\gamma \beta_1, \dots, \gamma \beta_d,T$, with $\gamma > 0$, the integral in the statement of the lemma changes by the factor $\gamma$.} {}{}{We claim that it suffices to prove that
$$
\int_{1}^{2} \dots \int_{1}^{2} \int_0^T \Lambda_{Q_1}(t) \Lambda_{Q_2}(\beta_2 t) \dots \Lambda_{Q_k}(\beta_k t)\,dt \,d\beta_2 \dots d\beta_k \lesssim
{}{}{\mathbf{1}_{t\geq \frac{1}{4N}}}
 \frac{Q_1 \dots Q_k}{N^{k}} T
.
$$
Indeed the case \(t<\frac{1}{4N}\) of the lemma is then immediate, and the remaining case \(\frac{1}{4N}\leq t \leq N^\kappa\) follows by the Borel-Cantelli lemma as explained in Appendix~\ref{appendix}.
}
{}{}{By definition of $\Lambda_Q$},
$$
\int_{1}^{2} \Lambda_Q(\beta t) \,d\beta = \frac{1}{t} \int_{t}^{2t} \Lambda_Q(y)\,dy = \frac{1}{t} \sum_{\substack{(a,q) = 1 \\ \frac{1}{2}Q<q<Q}}\int_t^{2t} \mathbf{1}_{[-1,1]}\left(NQ \left(y-\frac{a}{q} \right) \right)\,dy 
$$
{}{}{To estimate this integral, we observe first that, if $[t,2t] \cap [\frac{a}{q} - \frac{1}{NQ}, \frac{a}{q} + \frac{1}{NQ}] \neq \emptyset$, then
$$
2t \geq \frac{a}{q} - \frac{1}{NQ} \geq \frac{1}{Q} \left[ 1 - \frac{1}{N} \right],
$$
so that, in particular $t {}{}{{}\geq \frac{1}{4Q}}$. Similarly, one can show that the number of $a$'s such that $[t,2t] \cap [\frac{a}{q} - \frac{1}{NQ}, \frac{a}{q} + \frac{1}{NQ}] \neq \emptyset$ is $\lesssim Qt$. Since furthermore the number of $q$'s in $[\frac{1}{2}Q,Q]$ is $\leq Q$, and since finally the integral of $\mathbf{1}_{[-1,1]}\left(NQ \left(y-\frac{a}{q} \right) \right)$ is $\lesssim \frac{1}{NQ}$, we obtain the estimate
$$
\int_{1}^{2} \Lambda_Q(\beta t) \,d\beta \lesssim
\frac{1}{t} \cdot Qt \cdot Q \cdot \frac{1}{NQ}{}{}{\mathbf{1}_{t\geq \frac{1}{4Q}}}
\lesssim \frac{Q}{N}{}{}{\mathbf{1}_{t\geq \frac{1}{4Q}}}
.
$$}
Then, by Fubini's theorem
\begin{align*}
& \int_{1}^{2} \dots \int_{1}^{2} \int_0^T \Lambda_{Q_1}(t) \Lambda_{Q_2}(\beta_2 t) \dots \Lambda_{Q_k}(\beta_k t)\,dt \,d\beta_2 \dots d\beta_k \\
& \qquad \qquad = \int_0^T \Lambda_{Q_1}(t) \left[ \int_{1}^{2} \Lambda_{Q_2}(\beta_2 t) \,d\beta_2 \right] \dots \left[ \int_{1}^{2} \Lambda_{Q_k}(\beta_k t) \,d\beta_k \right]\,dt \\
& \qquad \qquad \lesssim {}{}{\int_0^T} \Lambda_{Q_1}(t) \frac{Q_2 \dots Q_k}{N^{k-1}}\,dt \lesssim
{}{}{\mathbf{1}_{t\geq \frac{1}{4Q_1}}} \frac{Q_1 \dots Q_k}{N^k} T
.
\end{align*}
\end{proof}

A consequence of this lemma is an $L^1_t L^\infty_x$ estimate on $K_N$.

\begin{lem}[Square root cancellation in $L^1_t L^\infty_x$]
\label{lemsrc1} 
Let $\kappa,\epsilon>0$; then for generic {}{}{$\beta_1, \dots, \beta_d \in [1,2]^d$} there holds, for $N$ a power of 2 and $\frac{1}{N} < T < N^\kappa$,
$$
\frac{1}{T} \int_{1/N}^{T} \| K_N(t,\cdot) \| _{L^\infty} \,dt \lesssim_{\epsilon,\kappa,\beta_1 \dots \beta_d} N^{\frac{d}{2} + \epsilon}.
$$
\end{lem}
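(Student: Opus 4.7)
The plan is to combine the product structure $K_N(t,x) = \prod_{j=1}^d K_N^{(1)}(\beta_j t, x)$ with the major/minor arc decomposition~\eqref{decomposition} applied separately to each factor $\beta_j t$. Expanding
$$
1 = \prod_{j=1}^d \Bigl[\Lambda_0(\beta_j t) + \sum_{Q_j} \Lambda_{Q_j}(\beta_j t) + \rho(\beta_j t)\Bigr]
$$
partitions $\mathbb{R}$ into pieces indexed by a choice of $\Lambda_0$, one $\Lambda_{Q_j}$ (with $Q_j$ a dyadic power $\leq c_0 N$), or $\rho$ for each coordinate. Since each $\beta_j \geq 1$, any term containing a factor $\Lambda_0(\beta_j t)$ is supported where $t \leq 1/(N\beta_j) \leq 1/N$, which intersects the integration interval $[1/N,T]$ in a set of measure zero and can be discarded. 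So first I would fix a subset $S \subseteq \{1,\dots,d\}$ of size $k$, pick dyadic integers $Q_j \in [1,c_0 N]$ for each $j \in S$, and assign $\rho$ on the complement.

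On the support of such a term the pointwise bound~\eqref{bound1*} gives
$$
|K_N(t,x)| \lesssim_\epsilon \frac{N^{(d+k)/2+\epsilon}}{\sqrt{\prod_{j\in S} Q_j}},
$$
while Lemma~\ref{generic} (the main generic ingredient) controls the measure of the intersection of the arc supports by $\lesssim_{\beta,\epsilon} N^{\epsilon}\,T \prod_{j\in S} Q_j / N^{k}$. Multiplying these two estimates, the contribution to $\int_{1/N}^T \|K_N(t,\cdot)\|_{L^\infty}\,dt$ from one such piece is at most
$$
N^{(d-k)/2 + 2\epsilon}\,T\,\sqrt{\textstyle\prod_{j\in S} Q_j}.
$$

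Then I would sum over the dyadic choices of $Q_j$ using the geometric estimate $\sum_{Q \le c_0 N \text{ dyadic}} \sqrt{Q} \lesssim N^{1/2}$, which converts $\sqrt{\prod_{j\in S} Q_j}$ into $N^{k/2}$ (the logarithmic losses being absorbed in $N^\epsilon$). This exactly cancels the $N^{-k/2}$ arising from the Weyl bound and leaves $N^{d/2+2\epsilon}T$. Finally, summing over the $2^d$ choices of $S$ and the $k=0$ case (all factors $\rho$, in which the bound $|K_N| \lesssim N^{d/2+\epsilon}$ is integrated directly on $[1/N,T]$) introduces only a dimensional constant. Dividing by $T$ and renaming $\epsilon$ yields the desired bound. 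The main obstacle, namely the genericity-based cancellation in the measure of overlapping major arcs, has already been isolated in Lemma~\ref{generic}; once that is in hand, the proof of Lemma~\ref{lemsrc1} is essentially bookkeeping.
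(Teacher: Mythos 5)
Your argument is correct and is essentially the paper's own proof: the same per-coordinate major/minor arc decomposition~\eqref{decomposition}, the pointwise Weyl bound~\eqref{bound1*}, the measure estimate of Lemma~\ref{generic}, and the geometric summation over dyadic $Q_j$ that cancels the $N^{-k/2}$ gain. The only cosmetic difference is that you sum over an arbitrary subset $S$ of major-arc coordinates, where the paper relabels so that these are the first $k$; nothing changes.
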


\begin{proof} {}{}{The first step is to use the decomposition~\eqref{decomposition} in each of the variables $\beta_1 t,\dots \beta_d t$. Note that, since $t > \frac{1}{N}$ and $\beta_i \geq 1$, the term $\Lambda_0(\beta_i t)$ is always zero. By the inequality~\eqref{bound1*}, for almost any choice of $\beta_1,\dots,\beta_d$, there holds}
\begin{align*}
& \frac{1}{T} \int_{1/N}^{T} \| K_N(t,\cdot) \| _{L^\infty} \,dt \\
& {}{}{\qquad \lesssim_\epsilon \sum_{k=0}^d \sum_{\substack{Q_1, \dots Q_k \leq c_0 N \\ Q_i \in 2^\mathbb{N}}}  \frac 1 T \int_{1/N}^T \Lambda_{Q_1}(\beta_1 t) \dots \Lambda_{Q_k}(\beta_k t) \rho(\beta_{k+1} t) \dots \rho(\beta_d t) \,dt  \frac{N^{\frac{k+d}{2}+\epsilon}}{\sqrt{Q_1 \dots Q_k}}} \\
&{}{}{\qquad \lesssim_{\epsilon,\kappa,\beta_1 \dots \beta_d}  N^\epsilon
\sum_{k=0}^d \sum_{Q_1, \dots Q_k \leq c_0 N} \frac{Q_1 \dots Q_k}{N^k}  \frac{N^{\frac{k+d}{2}}}{\sqrt{Q_1 \dots Q_k}}} \\
& \qquad \lesssim N^{\frac{d}{2} + \epsilon}.\qedhere
\end{align*}
\end{proof}

\subsection{The case of generic tori} 
{}{}{We start with an averaging lemma.}

{}{}{\begin{lem}
For $A \in \mathbb{R}$, $\lambda>0$, and $N \geq 10$,
$$
\int_{-1}^1 \min \left( \frac{1}{\| \lambda h + A \|} , N \right) \,dh \lesssim 
\left\{
\begin{array}{ll}
\log N  & \mbox{if $\lambda \geq 1$} \\
\frac{\operatorname{log} N}{\lambda} & \mbox{if $\lambda \leq 1$},
\end{array}
\right.
$$
where, for a real number $x$, the notation $\| x \|$ stands for $\operatorname{dist}(x,\mathbb{Z}){}{}{=\min_{k\in\mathbb{Z}}|k-x|}$.
\end{lem}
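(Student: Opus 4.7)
The plan is to reduce the estimate, via a linear change of variables, to the model problem of integrating $\min(1/\lVert u\rVert, N)$ against Lebesgue measure on a finite interval, and then to exploit the $1$-periodicity of $\lVert \cdot \rVert$. Setting $u = \lambda h + A$ transforms the integral into
$$
\int_{-1}^1 \min\!\left(\frac{1}{\lVert \lambda h + A\rVert}, N\right) dh = \frac{1}{\lambda}\int_{A-\lambda}^{A+\lambda} \min\!\left(\frac{1}{\lVert u\rVert}, N\right) du,
$$
so the only thing left is to bound the integral of $\min(1/\lVert u\rVert,N)$ over an interval of length $2\lambda$.

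The next step is to record the one-period bound. Since $\lVert u\rVert$ has period $1$, by symmetry about $u=1/2$ and a direct splitting at $\lVert u\rVert = 1/N$, I would compute
$$
\int_0^1 \min\!\left(\frac{1}{\lVert u\rVert},N\right) du = 2\int_0^{1/2} \min\!\left(\frac{1}{u},N\right) du = 2 + 2\log(N/2) \lesssim \log N.
$$
From this it follows that the integral of $\min(1/\lVert u\rVert, N)$ over any interval $I\subset\mathbb{R}$ is $\lesssim (1+|I|)\log N$, because such an interval meets at most $\lceil |I|\rceil + 1$ fundamental periods.

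Applying this to $I = [A-\lambda, A+\lambda]$, of length $2\lambda$, I would split into the two regimes. If $\lambda \geq 1$, then $1 + |I| \lesssim \lambda$, and the prefactor $1/\lambda$ yields $\log N$. If $\lambda \leq 1$, then $1 + |I| \lesssim 1$, so the bound on the integral is $\lesssim \log N$, which becomes $\log N/\lambda$ after dividing by $\lambda$; this is exactly the two-case conclusion of the lemma.

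I expect no genuine obstacle: the lemma is a routine scaling-and-periodicity estimate, and the dichotomy in the statement is simply the dichotomy between the interval $[A-\lambda, A+\lambda]$ being long or short compared to the period of $\lVert \cdot \rVert$. The only minor bookkeeping point is checking that the per-period bound is robust enough to handle intervals that are not aligned to $\mathbb{Z}$, which is taken care of by allowing the harmless $+1$ in the count of fundamental periods covered.
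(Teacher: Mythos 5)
Your proof is correct and rests on the same mechanism as the paper's: the substitution $u=\lambda h+A$, the periodicity of $\|\cdot\|$, and the fact that the integral of $\min(1/\|u\|,N)$ over one period is $O(\log N)$. You simply make explicit the covering-by-periods step that the paper leaves implicit, so no further comment is needed.
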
}

{}{}{\begin{proof} \label{lemmaaverage}
If $\lambda \geq 1$, the left-hand side can be bounded by the average of the function $\min \left( \frac{1}{\|  h \|} , N\right)$, which equals $\log N$. If $\lambda \leq 1$, the left-hand side is bounded by $\int_{-1}^1 \min \left( \frac{1}{\| \lambda h \|} , N \right) \,dh \lesssim \frac{\operatorname{log} N}{\lambda}$.
\end{proof}}

{}{}{Armed with this lemma, we can now prove the desired square root cancellation result - its proof already appeared in~\cite{CG}, but we include an equivalent version here for the reader's convenience. 
Recall that the measure on nonsingular symmetric matrices we consider is given by $B = \operatorname{Id} + h_{ij}$, where $h$ is a symmetric matrix, all of whose coefficients are independent (besides the symmetry assumption) and uniformly distributed in $\left[ \frac{1}{10d^2} , \frac{1}{10d^2} \right]$.}

{}{}{\begin{lem}[Square root cancellation in $L^1_t L^\infty_x$]\label{lemsrc2} Let $\kappa,\epsilon>0$; then for generic $(\beta_{i,j})$, there holds, for $N$ a power of 2 and $\frac{1}{N} < T < N^\kappa$,
$$
\frac{1}{T} \int_{1/N}^{T} \| K_N(t,\cdot) \| _{L^\infty} \,dt \lesssim_{\epsilon,\kappa,\beta_{ij}} N^{\frac{d}{2} + \epsilon}.
$$
\end{lem}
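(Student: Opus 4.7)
The plan is to apply Weyl differencing to $|K_N(t,x)|^2$, followed by averaging over the random symmetric matrix $h=(h_{ij})$, Cauchy--Schwarz in $t$, and a standard Borel--Cantelli step. Writing $|K_N(t,x)|^2=\sum_{k,k'}\phi(k/N)\phi(k'/N)e^{-2\pi i(k-k')\cdot x-2\pi i t(\mathcal{Q}(k)-\mathcal{Q}(k'))}$, substituting $k=k'+m$, and using $\mathcal{Q}(k'+m)-\mathcal{Q}(k')=\mathcal{Q}(m)+2(Bm)\cdot k'$ (with $B=(\beta_{ij})$), the inner sum in $k'$ factorises as a product over $j$ of one-variable exponential sums. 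Poisson summation in each factor then gives the $x$-free bound
\begin{equation*}
\sup_x|K_N(t,x)|^2 \,\lesssim\, N^d + \sum_{\substack{0\neq m\in\mathbb{Z}^d\\\max_i|m_i|\lesssim N}}\prod_{j=1}^d\min\bigl(N,\|2t(Bm)_j\|^{-1}\bigr),
\end{equation*}
with the $N^d$ coming from $m=0$; this term integrates in $t$ to the acceptable $TN^{d/2}$ after taking square roots.

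The difficulty in averaging the remaining sum, relative to Lemma~\ref{lemsrc1}, is that since $h$ is symmetric but not diagonal the coordinates $(Bm)_j$ are coupled through the off-diagonal entries of $h$ and cannot be averaged direction by direction. Instead, for each $m\neq 0$, I would fix an index $i_0$ with $|m_{i_0}|=\max_i|m_i|$ and perform a linear change of variables on the single ``row'' $(h_{i_0,\ell})_{\ell=1,\dotsc,d}$ (identifying $h_{i_0,\ell}$ with $h_{\ell,i_0}$ when $\ell<i_0$), replacing these $d$ parameters by $u_j:=(Bm)_j$. A direct computation shows that the resulting Jacobian matrix becomes triangular after one row permutation, with $m_{i_0}$ on the diagonal; its determinant is therefore $m_{i_0}^d$. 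The image of the cube projects onto each $u_j$-axis in an interval of length $\lesssim (\sum_\ell|m_\ell|)/d^2\lesssim N$, so the elementary one-dimensional bound
\begin{equation*}
\int_{-L}^{L}\min\bigl(N,\|2tu\|^{-1}\bigr)\,du\;\lesssim\; L\log N + t^{-1}\log N\;\lesssim\; N\log N\qquad(t\geq1/N),
\end{equation*}
together with trivial integration of the remaining $h_{ij}$'s, yields
\begin{equation*}
\mathbb{E}_h\,\prod_j\min\bigl(N,\|2t(Bm)_j\|^{-1}\bigr)\;\lesssim\;\frac{(N\log N)^d}{|m_{i_0}|^d}.
\end{equation*}

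Summing over $m\neq 0$ with $\max_i|m_i|\lesssim N$, and using $\sum_{0\neq m,\,\max_i|m_i|\leq N}(\max_i|m_i|)^{-d}\lesssim\log N$ (obtained by grouping $m$'s with $\max_i|m_i|=k$), produces $\mathbb{E}_h\sup_x|K_N(t,x)|^2\lesssim N^d(\log N)^{d+1}$. Integrating in $t\in[1/N,T]$ and applying Cauchy--Schwarz then gives
\begin{equation*}
\mathbb{E}_h\int_{1/N}^T\|K_N(t,\cdot)\|_{L^\infty}\,dt\;\lesssim\; T\,N^{d/2}(\log N)^{(d+1)/2}.
\end{equation*}
A Chebyshev + Borel--Cantelli step identical to the one invoked in Lemma~\ref{generic} and detailed in Appendix~\ref{appendix} upgrades this expectation bound to the desired almost sure bound valid simultaneously for all dyadic $N$ and $T\leq N^\kappa$. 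The main obstacle, and the place where genericity of $(\beta_{ij})$ is really used, is the change of variables described above: one must choose $d$ of the $d(d+1)/2$ symmetric parameters in $h$ that act ``independently'' on $(Bm)$, and the row $(h_{i_0,\ell})_\ell$ is the natural choice because the symmetric structure conspires to give a triangular Jacobian of size $m_{i_0}^d$, exactly the denominator needed to control the sum over $m$.
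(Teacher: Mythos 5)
Your proposal is correct and follows essentially the same route as the paper: Weyl differencing to reduce $\sup_x|K_N(t,x)|^2$ to $\sum_m\prod_j\min(N,\|2t(Bm)_j\|^{-1})$, averaging over the row of $h$ indexed by the largest coordinate of $m$ (exploiting that $h_{i_0,j}=h_{j,i_0}$ enters $(Bm)_j$ with the large coefficient $m_{i_0}$), then Cauchy--Schwarz in $t$ and Borel--Cantelli. The only cosmetic difference is that you perform the $d$ averagings as a single linear change of variables with Jacobian $m_{i_0}^d$, whereas the paper iterates a one-dimensional averaging lemma over $\beta_{1,1},\dotsc,\beta_{1,d}$; the computations are identical in substance.
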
}

{}{}{\begin{proof} By the Borel-Cantelli argument in Appendix~\ref{appendix}, the result would follow from the bound
$$
\int \frac{1}{T} \int_{1/N}^{T} \| K_N(t,\cdot) \| _{L^\infty} \,dt \,dB \lesssim_{\epsilon} N^{\frac{d}{2} + \epsilon}, 
$$
which will now be proved. For $x \in \mathbb{T}^d$ and $t \in \mathbb{R}$, applying Weyl {}{}{differencing} gives
$$
|K_N(t,x)|^2 = \sum_{m,n} \phi \left(\frac{m_1 + n_1}{2N} \right) \dots \phi \left( \frac{m_d + n_d}{2N} \right) \left(\frac{m_1 - n_1}{2N} \right) \dots \phi \left( \frac{m_d - n_d}{2N} \right) e^{2itQ(m,n)},
$$
(where the sum is implicitly restricted to $m_i$, $n_i$ having the same parity). By Abel summation, this implies that
$$
\| K_N(t,\cdot) \|_{L^\infty_x} \lesssim \sum_{|n| \lesssim N} \prod_{i=1}^d \min \left(\frac{1}{\| t Q_i(n) \|} , N \right),
$$
where $Q_i(n) = \sum_j \beta_{ij} n_j$. Combining the Cauchy-Schwarz inequality with the above yields
\begin{align*}
 \int_{1/N}^{T} \| K_N(t,\cdot) \| _{L^\infty} \,dt \,dB & \lesssim
\sqrt{T} \left[ \int \int_{1/N}^{T} \| K_N(t,\cdot) \| _{L^\infty_x}^2 \,dt \,dB \right]^{1/2} \\
& \lesssim \sqrt{T} \left[ \int \int_{1/N}^{T} \sum_{|n| \lesssim N} \prod_{i=1}^d \min \left( \frac{1}{\| t Q_i(n) \|} , N \right) \, dt \, dB \right]^{1/2},
\end{align*}
where \(dB = \prod_{i \leq j} d\beta_{i,j}.\)
We now exchange the order of summation and integration, performing first the integration over $B$. Without loss of generality, assume that $|n_1| \sim |n|$. Note that $tQ_{1,1} (n) = t \sum_j \beta_{1,j} n_j$; therefore, by Lemma~\ref{lemmaaverage}, integrating first $\int \min \left( \frac{1}{\| t Q_1(n) \|} , N \right) \,d\beta_{1,1}$ gives $\log N \langle \frac{1}{t|n|} \rangle$. We integrate next $\int \min \left( \frac{1}{\| t Q_2(n) \|} , N \right) \,d\beta_{1,2}$, giving the same result, and similarly $\int \min \left( \frac{1}{\| t Q_k(n) \|} , N \right) \,d\beta_{1,k}$, for $3 \leq k \leq d$. Finally, $\int \prod_{2 \leq i \leq j} d\beta_{i,j}$ gives $O(1)$. Coming back to the sequence of inequalities above,
\begin{align*}
 \int_{1/N}^{T} \| K_N(t,\cdot) \| _{L^\infty} \,dt \,dB & \lesssim \sqrt{T} \left[ \sum_{|n| \lesssim N} ( \log N)^d \int_{1/N}^T  \langle \frac{1}{t|n|} \rangle^d \,dt \right]^{1/2} \\
& \lesssim T N^{d/2} (\log N)^{d/2}.
\end{align*}
\end{proof}}

\section{Proof of Theorem~\ref{thmharmonicanalysis}}

\label{PTHA}

{}{}{An important element of the proof is the optimal $L^{2} \to L^{p_{ST}}$ bound for spectral projectors. As observed in the previous article by the authors~\cite{GM}, it is a consequence of the $\ell^2$ decoupling bound of Bourgain-Demeter. The statement is as follows.}

{}{}{\begin{thm}\label{thmpST}
If $\lambda > 1$ and $\delta <1$, for any positive definite quadratic form $\mathcal{Q}$,
$$
\| P_{\lambda,\delta} \|_{L^2 \to L^{p_{ST}}} \lesssim (1 + \lambda \delta)^{\frac{1}{p_{ST}}}.
$$
\end{thm}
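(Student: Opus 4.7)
The plan is to combine the $\ell^2$ decoupling theorem of Bourgain-Demeter, applied to the ellipsoid $\{\mathcal{Q}(\xi)=\lambda^2\}$, with an elementary Bernstein-type bound on each decoupling cap. Since $\mathcal{Q}$ is positive definite, the ellipsoid has Gaussian curvature bounded away from zero after rescaling by $\lambda$, so the decoupling theorem applies. A standard transfer argument, realized by multiplying a Fourier series on the torus by a smooth bump adapted to a large cube in $\R^d$, turns the Euclidean decoupling inequality into a torus statement: partitioning the $\delta$-shell around the ellipsoid into caps $\theta$ of tangential size $\sqrt{\lambda\delta}$, one finds that for any $f\in L^2(\mathbb{T}^d)$,
\[
\|P_{\lambda,\delta} f\|_{L^{p_{ST}}(\mathbb{T}^d)} \lesssim_\epsilon \lambda^\epsilon \Bigl( \sum_\theta \|(P_{\lambda,\delta} f)_\theta\|^2_{L^{p_{ST}}(\mathbb{T}^d)} \Bigr)^{1/2},
\]
where $(P_{\lambda,\delta} f)_\theta$ is the portion of the Fourier series with frequencies in the cap $\theta$.

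Next I would bound each cap. A cap $\theta$, viewed as a subset of $\R^d$, is a thin slab of dimensions $\sqrt{\lambda\delta}\times\cdots\times\sqrt{\lambda\delta}\times\delta$; since $\delta<1$, a standard volumetric count yields $N_\theta := \#(\theta\cap\Z^d) \lesssim (1+\sqrt{\lambda\delta})^{d-1} \lesssim 1+(\lambda\delta)^{(d-1)/2}$. For a trigonometric polynomial $g$ with at most $N$ nonzero Fourier coefficients, the Bernstein bound $\|g\|_{L^\infty}\leq \sqrt{N}\,\|g\|_{L^2}$ interpolated against $L^2$ gives $\|g\|_{L^{p_{ST}}} \leq N^{1/(d+1)}\|g\|_{L^2}$; applying this and using the identity $\tfrac{d-1}{2(d+1)}=\tfrac{1}{p_{ST}}$ produces the per-cap bound
\[
\|(P_{\lambda,\delta}f)_\theta\|_{L^{p_{ST}}} \lesssim \bigl(1+(\lambda\delta)^{1/p_{ST}}\bigr)\|(P_{\lambda,\delta}f)_\theta\|_{L^2}.
\]

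Combining these with Plancherel, $\sum_\theta\|(P_{\lambda,\delta}f)_\theta\|_{L^2}^2 = \|P_{\lambda,\delta}f\|_{L^2}^2\leq \|f\|_{L^2}^2$, gives
\[
\|P_{\lambda,\delta} f\|_{L^{p_{ST}}} \lesssim_\epsilon \lambda^\epsilon (1+\lambda\delta)^{1/p_{ST}} \|f\|_{L^2},
\]
which is the asserted inequality (the $\lambda^\epsilon$ loss being absorbed into the implicit constant per the paper's conventions for $\lesssim$). I expect the main technical hurdle to be the decoupling application and the associated transfer from $\R^d$ to $\mathbb{T}^d$, which is standard but delicate and is carried out in \cite{GM}; a further subtle point is verifying that the volumetric lattice-point bound $N_\theta\lesssim 1+(\lambda\delta)^{(d-1)/2}$ holds uniformly in the orientation of the cap, which is what prevents potential clustering of lattice points on a single level set of $\mathcal{Q}$ from spoiling the argument.
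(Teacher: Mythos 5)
Your argument is the standard decoupling-plus-Bernstein proof and is precisely the route of the cited reference~\cite{GM}, to which the present paper defers for Theorem~\ref{thmpST}: rescale so the ellipsoid $\{\mathcal{Q}=1\}$ has nonvanishing curvature, decouple into caps of tangential size $\sqrt{\lambda\delta}$ and thickness $\delta$, bound the lattice points per cap by the volume-packing estimate $N_\theta\lesssim(1+\sqrt{\lambda\delta})^{d-1}$ (which is indeed uniform in orientation, since the unit cubes centred at lattice points of the cap pack into an $O(1)$-neighbourhood of it), and apply Bernstein with the exponent identity $\tfrac{d-1}{2(d+1)}=\tfrac{1}{p_{ST}}$; the arithmetic is correct. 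The one caveat is that decoupling leaves a genuine $\lambda^\epsilon$ factor that cannot literally be absorbed into an absolute constant as you suggest, but this matches how the theorem is actually invoked in Step 4 of the proof of Theorem~\ref{thmharmonicanalysis}, where the $\lambda^\epsilon$ loss is carried explicitly.
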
}

{}{}{We now turn to the proof of Theorem~\ref{thmharmonicanalysis}.}

\begin{proof} \underline{Step 1: Allowing more general cutoff functions.} {}{}{Define the spectral projector
$$
P'_{\lambda,\delta} = \zeta \left( \frac{\mathcal{Q}(\nabla) + \lambda^2}{\lambda \delta} \right),
$$
where $\zeta$ is a Schwartz function such that $\zeta(0) > 0$ and $\widehat{\zeta}$ is compactly supported.}

{}{}{We claim that it suffices to prove Theorem~\ref{thmharmonicanalysis} for the spectral projector $P'_{\lambda,\delta}$ instead of $P'_{\lambda,\delta}$. Indeed, assume that $P'_{\lambda,\delta}$ enjoys the bound in this theorem. Since there exists $c>0$ such that $\zeta(x) \geq c \mathbf{1}_{[-c,c](x)}$, the desired bound follows\footnote{Here, we are using that, if $m_1(\Delta)$ and $m_2(\Delta)$ are two Fourier multipliers such that $|m_1| \geq |m_2|$, then $\| m_1(\Delta) \|_{L^2 \to L^p} \geq \| m_2(\Delta) \|_{L^2 \to L^p}$. This follows from Parseval's theorem.} for the operator $\mathbf{1}_{[-c,c]} \left( \frac{\mathcal{Q}(\nabla) + \lambda^2}{\lambda \delta} \right)$. This implies in turn this bound for the operator $\mathbf{1}_{[-a,a]} \left( \frac{\sqrt{-\mathcal{Q}(\nabla)} - \lambda}{\delta} \right)$, for a constant $a>0$. Finally, this implies the desired bound for $P_{\lambda,\delta}$ since $|\chi(x)|$ can be bounded by a finite sum of translates of $\mathbf{1}_{[-a,a]}$.}

{}{}{We now claim that $P'_{\lambda,1}$ enjoys the Sogge bounds~\eqref{boundSogge}, just like $P_{\lambda,1}$. This follows from writing
$$
P_{\lambda,1}' = \sum_{n > -\frac{\delta}{\lambda}} P'_{\lambda,1} \mathbf{1}_{[\lambda+(n-1)\delta, \lambda + n\delta]} \left(\sqrt{-\mathcal{Q}(\nabla)} \right)
$$
and bounding
$$
\| P_{\lambda,1}' \|_{L^2 \to L^p} \lesssim c_n \sum_{n > -\frac{\delta}{\lambda}} \left\|  \mathbf{1}_{[\lambda+(n-1)\delta, \lambda + n\delta]} \left(\sqrt{-\mathcal{Q}(\nabla)} \right) \right\|_{L^2 \to L^p} \;\; \mbox{with} \;\; c_n = \sup_{(n-1)\delta < x < n\delta} \left| \zeta \left( \frac{-x^2 + \lambda^2}{\lambda \delta} \right) \right|.
$$
The rapid decay of $\zeta$ implies that $|c_n| \lesssim n^{-N}$ for any $N$, while $\mathbf{1}_{[\lambda+(n-1)\delta, \lambda + n\delta]}$ enjoys the Sogge bounds. Thus, it is not hard to sum the above series, and deduce that $P_{\lambda,1}'$ also enjoys the Sogge bounds.}

{}{}{By a similar argument, it can be shown that Theorem~\eqref{thmpST} applies to $P'_{\lambda,1}$.}

\medskip

\noindent \underline{Step 2: splitting the spectral projector.}
Writing the function $x \mapsto \zeta \left( \frac{\mathcal{Q}(x) + \lambda^2}{\lambda \delta} \right)$ as a Fourier transform, the operator $P_{\lambda,\delta}'$ becomes
\begin{equation}
\label{integralP'}
P_{\lambda,\delta}' = \int_{\mathbb{R}} \lambda \delta \widehat{\zeta}(\lambda \delta t) e^{-2\pi i \lambda^2 t} e^{-2\pi it\mathcal{Q}(\nabla)} \,dt,
\end{equation}
with the kernel
$$
P_{\lambda,\delta}'(x) = \int_{\mathbb{R}} \lambda \delta \widehat{\zeta}(\lambda \delta t) e^{-2\pi i \lambda^2 t} K_N(t,x) \,dt;
$$
here, we choose $N$ to be a power of $2$ in the range $[2\lambda,4\lambda]$.

The basic idea is to split the integral giving $P_{\lambda,\delta}$ into two pieces, $|t| < \lambda^{-1}$ and $|t| > \lambda^{-1}$. The former corresponds to an operator of the type $\delta P_{\lambda,1}'$, for which bounds are well-known: this corresponds to the classical Sogge theorem. The latter can be thought of as an error term, it will be bounded by interpolation between $p=p_{ST}$ and $p= \infty$, and it is for this term that genericity is used.

Turning to the implementation of this plan, we write
\begin{align*}
P_{\lambda,\delta}' & = \int_{\mathbb{R}} \lambda \delta \widehat{\zeta}(\lambda t) \widehat{\zeta}(\lambda \delta t) e^{-2\pi  i \lambda^2 t} e^{-2\pi it \mathcal{Q}(\nabla)} \,dt + \int_{\mathbb{R}} \lambda \delta [1- \widehat{\zeta}(\lambda t)] \widehat{\zeta}(\lambda \delta t) e^{-2\pi  i \lambda^2 t} e^{-2\pi it\mathcal{Q}(\nabla)} \,dt \\
& = P^{\operatorname{small}}_{\lambda,\delta} +  P^{\operatorname{large}}_{\lambda,\delta} 
\end{align*}

\bigskip

\noindent \underline{Step 3: Bounding the term corresponding to small $t$.} {}{}{Observe that $P_{\lambda,\delta}^{\operatorname{small}}$ can be written $\delta P''_{\lambda, 1}$, where $P''_{\lambda, 1}$ is a variation on $P'_{\lambda,1}$; this can be compared to the definition of $P_{\lambda,\delta}^{\operatorname{small}}$ and~\eqref{integralP'}. We saw in Step 1 that $P'_{\lambda,1}$ enjoys the Sogge bounds, and this remains true for $P''_{\lambda,1}$}.

Furthermore, by a classical $TT^*$ argument, the operator norm of the spectral projector $L^{p'} \to L^p$ is the square of the operator norm of the spectral projector $L^2 \to L^p$ (once again, up to redefining the cutoff function $\chi$).

Therefore, it enjoys the bound
\begin{equation}
\label{Psmall}
{}{}{\| P_{\lambda,\delta}^{\operatorname{small}} \|_{L^{p'} \to L^p} \lesssim \delta \| P''_{\lambda, 1} \|^2_{L^2 \to L^p} \lesssim \lambda^{d-1-\frac{2d}{p}} \delta \qquad \mbox{for $p \geq p_{ST}$}.}
\end{equation}

\bigskip

\noindent \underline{Step 4: Bounding the term corresponding to large $t$.}
In order to bound this term, we will interpolate between

\begin{itemize}

\item {}{}{The case $p=p_{ST}$: in this case, we resort to Theorem~\ref{thmpST}. We saw in Step 1 that it applies to $P'_{\lambda,\delta}$, and, by the same argument, it applies to $P''_{\lambda,\delta}$.} This gives
\begin{align*}
\| P_{\lambda,\delta}^{\operatorname{large}} \|_{L^{p_{ST}'} \to L^p_{ST}} & \lesssim \| P'_{\lambda,\delta} \|_{L^{p_{ST}'} \to L^p_{ST}} + \| P^{\operatorname{small}}_{\lambda,\delta} \|_{L^{p_{ST}'} \to L^p_{ST}} \\
& \lesssim \| P'_{\lambda,\delta} \|_{L^{p'_{ST}} \to L^{p_{ST}}} + \delta \| P''_{\lambda,\delta} \|_{L^{p'_{ST}} \to L^{p_{ST}}} \\
& \lesssim_\epsilon \lambda^\epsilon [1 + (\lambda \delta)^{2/p_{ST}} + \delta \lambda^{2/p_{ST}}] \lesssim \lambda^\epsilon [ 1 + (\lambda \delta)^{2/p_{ST}}].
\end{align*}
\item {}{}{The case $p = \infty$: in this case, we resort to Lemma~\ref{lemsrc1} (generic rectangular tori) and Lemma~\ref{lemsrc2} (generic tori). In order for these lemmas to apply, we add a further requirement on $\zeta$, namely that its Fourier transform be $1$ in a neighbourhood of zero.} Then, for almost any choice of $(\beta_{ij})$,
$$
\| P_{\lambda,\delta}^{\operatorname{large}} \|_{L^1 \to L^\infty} \lesssim \int_{\mathbb{R}} \lambda \delta |1 - \widehat{\zeta}(\lambda t) | \left|\widehat{\zeta}(\lambda \delta t)\right| \| K_{N} (t,\cdot) \|_{L^\infty} \,dt \lesssim_{\beta,\epsilon} \lambda^{\frac{d}{2} + \epsilon}.
$$
\end{itemize}

Interpolating between these two bounds gives for almost any choice of $(\beta_{ij})$
\begin{equation}
\label{Plarge}
\| P_{\lambda,\delta}^{\operatorname{large}} \|_{L^{p'} \to L^p} \lesssim_{\beta,\epsilon} (1 + \lambda \delta)^{2/p} \lambda^{\frac{d}{2} \left( 1 - \frac{p_{ST}}{p} \right) + \epsilon} \qquad \mbox{for $p \geq p_{ST}$}.
\end{equation}

\medskip

\noindent \underline{Step 5: conclusion.} Finally, combining~\eqref{Psmall} and~\eqref{Plarge}, and using that $\| P_{\lambda,\delta}\|_{L^2 \to L^p}^2 \sim \| P_{\lambda,\delta}^2 \|_{L^{p'} \to L^p}$ (by the classical $TT^*$ argument) gives
$$
\| P_{\lambda,\delta}\|_{L^2 \to L^p} \lesssim_{\beta,\epsilon} \lambda^{\frac{d-1}{2} - \frac{d}{p}} \delta^{1/2} + (1 + \lambda \delta)^{\frac{1}{p}} \lambda^{\frac{d}{4} \left( 1 - \frac{p_{ST}}{p} \right) + \epsilon},
$$
from which the desired result follows.
\end{proof}

\section{Some linear algebra}\label{sec:linalg}

{}{}{
In this section we assemble technical tools to attack Problem~\ref{countingmatricesproblem}. Recall that the goal is to count the number of matrices
\[
\left(
\begin{matrix}
	m^2_{11}&\cdots&m^2_{1b}\\
	\vdots&\ddots&\vdots\\
	m^2_{d1}&\cdots&m^2_{db}\\
	\lambda^2&\cdots&\lambda^2
\end{matrix}
\right),
\]
where the \(m_{ij}\) are integers in given dyadic intervals and the maximal subdeterminants of \(P\) also lie in some specified dyadic intervals. The idea is to add the columns one by one, so that we count the number of possible \((m_{11},\dotsc,m_{d1})\), and for each possibility we count the number of \((m_{12},\dotsc,m_{d2})\), and so on. The main goal of this section is Lemma~\ref{lem:vol_I},  which can be understood as an estimate for the \emph{measure} of the \emph{real} vectors \((m_{1k},\dotsc,m_{dk})\) which are within a distance \(O(1)\) of a vector satisfying the required conditions, given the previous columns.
}

{}{}{In this and the next section we will often use the  notation \(D_k(M), D_k^{(\ell)}(M)\) defined in section~\ref{sec:notation}.}

\subsection{Singular values and largest subdeterminants}

{}{}{
We begin with a number of general statements about the size of the subdeterminants of a \(p\times q\) matrix, and their relation to the \emph{singular value decomposition}, a type of canonical form for matrices. Throughout this subsection, implicit constants in \(\lesssim\) and \(\sim\) notation may depend on \(p\) and \(q\).}

\begin{lem}[Singular value decomposition] \label{SVD} Let $M \in\R^{p\times q}$ and let $m=\min (p,q)$. Then there are $U\in O(p)$, $V\in O(q)$ and (uniquely defined) \textit{singular values} $\sigma_1(M) \geq \dotsb\geq\sigma_{m}(M)\geq 0$ such that
	\begin{equation}\label{eqn:normal-form}
		M = U \begin{pmatrix} \Sigma \\ 0 \end{pmatrix} V 
		\quad\text{or}\quad
		U  \begin{pmatrix} \Sigma \; 0 \end{pmatrix} 	V, \quad \mbox{where} \quad \Sigma = \operatorname{diag}( \sigma_1(M), \dots, \sigma_m(M)),
	\end{equation}
and where $0$ is a matrix of zeroes (possibly empty). 
\end{lem}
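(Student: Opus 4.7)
The plan is to reduce the statement to the spectral theorem for real symmetric matrices. Without loss of generality I would assume $p \geq q$, so that $m=q$; the other case is obtained by transposing and exchanging the roles of $U$ and $V$. The Gram matrix $M^T M \in \R^{q\times q}$ is symmetric and positive semidefinite, so by the spectral theorem there exist $V \in O(q)$ and non-negative reals $\sigma_1^2 \geq \sigma_2^2 \geq \dotsb \geq \sigma_q^2 \geq 0$ such that
\[
V^T (M^T M) V = \operatorname{diag}(\sigma_1^2, \dotsc, \sigma_q^2).
\]
Setting $\sigma_i := \sqrt{\sigma_i^2}$ furnishes the candidate singular values, which by construction are non-negative and decreasing.

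Next I would construct $U$. Let $r$ be the number of positive $\sigma_i$, and write $v_1,\dotsc,v_q$ for the columns of $V$. For $1 \leq i \leq r$ I would set $u_i := \sigma_i^{-1} M v_i \in \R^p$. The identity $v_i^T M^T M v_j = \sigma_i^2 \delta_{ij}$ then gives $u_i^T u_j = \delta_{ij}$ for $i,j \leq r$, so $\{u_1,\dotsc,u_r\}$ is orthonormal in $\R^p$. I would extend it via Gram--Schmidt to an orthonormal basis $u_1,\dotsc,u_p$ of $\R^p$, and let $U$ be the matrix with these columns; then $U \in O(p)$. A direct entry-by-entry computation shows $(U^T M V)_{ij} = \sigma_j \delta_{ij}$ for $j \leq r$, while for $j > r$ one has $\|M v_j\|^2 = v_j^T M^T M v_j = \sigma_j^2 = 0$, so $M v_j = 0$ and the $j$th column of $U^T M V$ vanishes. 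This yields the block form $U^T M V = \bigl(\begin{smallmatrix} \Sigma \\ 0 \end{smallmatrix}\bigr)$, or equivalently $M = U \bigl(\begin{smallmatrix} \Sigma \\ 0 \end{smallmatrix}\bigr) V^T$. A relabelling $V \mapsto V^T$ yields the exact form in \eqref{eqn:normal-form}.

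Uniqueness of the ordered tuple $(\sigma_1,\dotsc,\sigma_m)$ follows because the identity $M^T M = V^T \operatorname{diag}(\sigma_i^2) V$ exhibits $\sigma_1^2,\dotsc,\sigma_q^2$ as the eigenvalues of $M^T M$ counted with multiplicity; these are invariants of $M$, so when arranged in decreasing order they are determined by $M$ alone.

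I do not anticipate any substantive obstacle: all steps are standard, and the only mild point of care is the treatment of zero singular values, which is resolved precisely by extending the partial orthonormal family $\{u_1,\dotsc,u_r\}$ to a full basis of $\R^p$ rather than trying to define $u_i$ by the formula $\sigma_i^{-1} M v_i$ when $\sigma_i = 0$.
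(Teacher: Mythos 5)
Your proof is correct and complete. The paper states Lemma~\ref{SVD} without proof, treating the singular value decomposition as classical; your derivation via the spectral theorem applied to $M^TM$ is the standard argument and handles the only delicate points (zero singular values via extension of the partial orthonormal family, and uniqueness via the eigenvalues of $M^TM$) correctly.
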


\begin{lem}[Stability of $\sigma_k$ and $D_k$ under multiplication by orthogonal matrices] 
If $k \leq \min (p,q)$ and $M$ is a matrix in $\R^{p\times q}$,
\label{bluebird}
\begin{itemize}
\item[(i)] If $U\in O(p)$, then $D_k(UM) \sim D_k(M)$ and $\sigma_k(UM) = \sigma_k(M)$.
\item[(ii)] If $U\in O(q)$, then $D_k(MU) \sim D_k(M)$ and $\sigma_k(MU) = \sigma_k(M)$.
\end{itemize}
\end{lem}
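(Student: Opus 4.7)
The argument splits into two parts: the invariance of singular values under orthogonal multiplication is essentially immediate, and the comparability of the maximal subdeterminants will follow by relating $D_k(M)$ to the product $\sigma_1(M) \cdots \sigma_k(M)$.

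For the singular value equalities, I would use the characterisation that $\sigma_1(M)^2, \dotsc, \sigma_m(M)^2$ are precisely the eigenvalues of $M^T M$ (equivalently of $M M^T$), possibly together with zeros. For part (i), observe that $(UM)^T (UM) = M^T U^T U M = M^T M$ since $U \in O(p)$, so the two matrices have identical eigenvalues and $\sigma_k(UM) = \sigma_k(M)$. For part (ii), symmetrically $(MU)(MU)^T = M U U^T M^T = M M^T$ yields $\sigma_k(MU) = \sigma_k(M)$. Alternatively one can read this off directly from Lemma~\ref{SVD}: multiplying an SVD of $M$ by an orthogonal matrix on either side produces another valid SVD with the same diagonal block $\Sigma$, and the uniqueness clause in Lemma~\ref{SVD} finishes the job.

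For the subdeterminant estimates, the key identity is
\[
\sum_{\substack{\I \subset \{1,\dotsc,p\},\ \J \subset \{1,\dotsc,q\} \\ \#\I = \#\J = k}} \bigl( \det (M_{ij})_{i\in\I,\,j\in\J} \bigr)^2 \;=\; e_k\bigl(\sigma_1(M)^2,\dotsc,\sigma_m(M)^2\bigr),
\]
where $e_k$ denotes the $k$th elementary symmetric polynomial. This comes from applying the Cauchy--Binet formula to each $k \times k$ principal minor of $M^T M$ and then summing, using the fact that the sum of the $k \times k$ principal minors of a symmetric matrix equals $e_k$ of its eigenvalues. Since the dominant term of $e_k(\sigma_1(M)^2,\dotsc,\sigma_m(M)^2)$ is $\sigma_1(M)^2 \cdots \sigma_k(M)^2$ and the number of summands on either side is bounded in terms of $p$ and $q$, this yields the clean sandwich
\[
D_k(M) \sim \sigma_1(M)\, \sigma_2(M) \cdots \sigma_k(M).
\]
Combining with the singular value invariance, we obtain $D_k(UM) \sim \prod_{i=1}^k \sigma_i(UM) = \prod_{i=1}^k \sigma_i(M) \sim D_k(M)$, and the right-multiplication case is identical.

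I do not anticipate a serious obstacle: the singular value part is immediate, and the comparison $D_k(M) \sim \sigma_1(M) \cdots \sigma_k(M)$ is a classical consequence of Cauchy--Binet. The only point requiring some care is tracking that the implicit constants depend only on $p$ and $q$, but since these are controlled by the torus dimension $d$ in all later applications, this is consistent with the $\sim$ convention adopted in the paper.
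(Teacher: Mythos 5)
Your proof is correct, but the route for the $D_k$ part differs from the paper's. The paper applies Cauchy--Binet directly to a $k\times k$ minor of $UM$, expanding over an intermediate index set $\mathcal{K}$ and using that all $k\times k$ minors of an orthogonal matrix are $O(1)$; this gives $D_k(UM)\lesssim D_k(M)$, and running the same argument with $U^{-1}$ gives the reverse inequality. The relation $D_k(M)\sim\sigma_1(M)\cdots\sigma_k(M)$ is then stated as a \emph{corollary}, deduced afterwards by using the lemma to reduce to a rectangular diagonal matrix. You invert this logical order: your Cauchy--Binet computation establishes the identity $\sum_{\I,\J}\bigl(\det(M_{ij})_{i\in\I,j\in\J}\bigr)^2=e_k(\sigma_1(M)^2,\dotsc,\sigma_m(M)^2)$, from which the sandwich $D_k(M)\sim\sigma_1(M)\cdots\sigma_k(M)$ follows (the max of the squared minors is comparable to their sum, up to a constant depending only on $p,q$, and $e_k$ is comparable to its largest monomial $\sigma_1^2\cdots\sigma_k^2$), and the lemma then drops out of the invariance of singular values. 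Your approach proves the corollary and the lemma in one stroke and is arguably more conceptual; the paper's is a little shorter if one only wants the lemma. Both treatments of the singular-value claim (eigenvalues of $M^TM$, or uniqueness in Lemma~\ref{SVD}) are fine, and your tracking of the dependence of implicit constants on $p$ and $q$ is consistent with the paper's conventions.
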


\begin{proof} The statements $(i)$ and $(ii)$ are symmetric, so that we will only focus on $(i)$. Let \(\I\subset \{1,\dotsc, p\},\J \subset \{1,\dotsc,q\}, \#\I =\#\J=k \).
	The Cauchy-Binet identity gives
	\[
	\det \, (UM)_{i \in\I,j\in \J} = \det \, U_{i \in \I, \ell} \cdot M_{\ell, j\in \J} 
	=
	\sum_{\substack{ \mathcal{K}\subset \{1,\dotsc,p\}\\ \#\mathcal{K} = k }}
	\det \, (U_{i\ell})_{i\in\I,\ell \in \mathcal{K}}
	\det \, (M_{\ell j})_{\ell \in \mathcal{K},j\in \J}.
	\]
Hence \(D_k(UM)\lesssim D_k(M)\) and repeating the argument with \(U^{-1}M\) in place of \(M\) shows that \(D_k(M)\lesssim D_k(UM)\) as well. 

Finally, it follows from the uniqueness of the $(\sigma_i)$ in~\eqref{eqn:normal-form} that $\sigma_k(UM) = \sigma_k(M)$.
\end{proof}

\begin{cor*}[Relation between the $D_k$ and $\sigma_k$]
If $k \leq \min (p,q)$, the singular values and the maximal subdeterminants are such that
$$
\sigma_k(M) \sim D_{k-1}(M)^{-1}D_k(M),
$$
{}{}{where we use the convention that $0^{-1} 0=0$,} or equivalently
$$
D_k(M) \sim \sigma_1(M) \dots \sigma_k(M).
$$
\end{cor*}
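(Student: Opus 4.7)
The plan is to use the singular value decomposition from Lemma~\ref{SVD} to reduce to the case of a diagonal matrix, and then compute the subdeterminants explicitly. By Lemma~\ref{bluebird}, both sides of the relation $D_k(M) \sim \sigma_1(M)\dotsb\sigma_k(M)$ are stable (up to constants depending on $p,q$) under left and right multiplication by orthogonal matrices, so we may replace $M$ by $\binom{\Sigma}{0}$ or $(\Sigma \; 0)$, and in either case by the square diagonal matrix $\Sigma = \operatorname{diag}(\sigma_1(M),\dotsc,\sigma_m(M))$ itself (the extra zero rows or columns only contribute zero subdeterminants, so the maximum over $k\times k$ subdeterminants is unchanged).

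For a diagonal matrix $\Sigma$, every $k\times k$ subdeterminant is zero unless the row index set $\I$ equals the column index set $\J$, in which case the subdeterminant equals $\prod_{i\in\I}\sigma_i(M)$. Since $\sigma_1(M)\geq\sigma_2(M)\geq\dotsb\geq 0$, the maximum is achieved by $\I=\J=\{1,\dotsc,k\}$, giving
\[
D_k(\Sigma) = \sigma_1(M)\sigma_2(M)\dotsb\sigma_k(M).
\]
Combining with Lemma~\ref{bluebird} yields $D_k(M) \sim \sigma_1(M)\dotsb\sigma_k(M)$, which is the second form of the claim.

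The first form then follows by taking the ratio: if $\sigma_{k-1}(M)>0$ (so that all earlier $\sigma_i(M)>0$ as well), then $D_{k-1}(M)\sim\sigma_1(M)\dotsb\sigma_{k-1}(M)$ is nonzero and
\[
\frac{D_k(M)}{D_{k-1}(M)} \sim \sigma_k(M).
\]
If instead $\sigma_{k-1}(M)=0$, then $\sigma_k(M)=0$ as well by the monotonicity of the singular values, and moreover $D_{k-1}(M)\sim 0$ and $D_k(M)\sim 0$. Under the convention $0^{-1}\cdot 0 = 0$, the relation $\sigma_k(M)\sim D_{k-1}(M)^{-1}D_k(M)$ continues to hold trivially. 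No step here is really the main obstacle; the proof is essentially bookkeeping once one recognizes that orthogonal invariance plus the SVD reduces everything to the diagonal case.
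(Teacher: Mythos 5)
Your proof is correct and follows exactly the paper's route: reduce to the rectangular diagonal case via the singular value decomposition and the orthogonal invariance of $D_k$ and $\sigma_k$ (Lemma~\ref{bluebird}), then read off the identity for a diagonal matrix. The paper simply leaves the diagonal computation and the degenerate case $\sigma_{k-1}(M)=0$ as "obvious," whereas you spell them out.
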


\begin{proof}
By lemmas~\ref{SVD} and~\ref{bluebird}, it suffices to prove these formulas for a rectangular diagonal matrix; but then they are obvious.
\end{proof}

\begin{lem}\label{lem:rearrange-columns}
Given a matrix $M \in \mathbb{R}^{p\times q}$, we can change the order of its columns so that for each $k\leq\ell \leq \min (p,q)$,
	\begin{equation}\label{eqn:rearrange-columns}
		D^{(\ell)}_k(M)
		\sim
		D_k(M).
	\end{equation}
\end{lem}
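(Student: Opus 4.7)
The plan is to select the ordering greedily. Having chosen columns \(c_{i_1},\dotsc,c_{i_{k-1}}\), let \(V_{k-1}=\operatorname{span}(c_{i_1},\dotsc,c_{i_{k-1}})\) and let \(P_{k-1}\) denote orthogonal projection onto \(V_{k-1}^\perp\). I would then choose \(c_{i_k}\) to be the remaining column that maximises \(\lVert P_{k-1} c\rVert\), starting with \(c_{i_1}\) being the column of largest norm. Equivalently, at each step we pick the column which, together with \(c_{i_1},\dotsc,c_{i_{k-1}}\), maximises the \(k\)-dimensional volume of the parallelepiped in \(\R^p\); by the Cauchy--Binet identity this volume is \(\sim D_k^{(k)}(M)\).

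I would then prove by induction on \(k\) that \(D_k^{(k)}(M) \sim D_k(M)\). The base case \(k=1\) is immediate since \(D_1^{(1)}(M)=\max_j \lVert c_j\rVert = D_1(M)\). For the inductive step one factors
\[
\operatorname{vol}(c_{i_1},\dotsc,c_{i_k}) = \operatorname{vol}(c_{i_1},\dotsc,c_{i_{k-1}}) \cdot \lVert P_{k-1} c_{i_k}\rVert,
\]
applies the inductive hypothesis to the first factor, and combines with the identity \(D_k(M)\sim \sigma_1(M)\cdots\sigma_k(M)\) from the corollary after Lemma~\ref{bluebird}. The remaining ingredient, and the main obstacle of the proof, is the lower bound
\[
\max_{c\text{ remaining}} \lVert P_{k-1} c\rVert \gtrsim \sigma_k(M).
\]

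Establishing this inequality would be done by a dimension count. By the min--max characterisation of singular values, there is a \(k\)-dimensional subspace \(W\subset \R^q\) on which \(\lVert Mv\rVert \geq \sigma_k(M)\lVert v\rVert\). The image \(MW\) is a \(k\)-dimensional subspace of \(\R^p\), while \(V_{k-1}^\perp\) has codimension at most \(k-1\), so \(MW\cap V_{k-1}^\perp\) contains a nonzero vector \(w=Mv\) with \(v\in W\). Then \(P_{k-1}Mv=w\), so
\[
\sigma_1(P_{k-1}M)\geq \frac{\lVert w\rVert}{\lVert v\rVert}\geq \sigma_k(M).
\]
On the other hand \(\sigma_1(P_{k-1}M) \leq \lVert P_{k-1}M\rVert_F \lesssim \max_j \lVert P_{k-1} c_j\rVert\), and since \(P_{k-1}c_j=0\) for \(j\in\{i_1,\dotsc,i_{k-1}\}\) the maximum is attained at a remaining column, proving the claim (with implicit constant depending on \(q\)).

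Finally, once \(D_k^{(k)}(M)\sim D_k(M)\) is established for every \(k\leq \min(p,q)\), the full statement follows from monotonicity: for any \(\ell\geq k\) we have \(D_k^{(\ell)}(M)\geq D_k^{(k)}(M)\sim D_k(M)\), while trivially \(D_k^{(\ell)}(M)\leq D_k(M)\).
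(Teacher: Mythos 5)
Your proof is correct, but it takes a genuinely different route from the paper's. The paper first uses Lemma~\ref{bluebird} and the singular value decomposition to reduce to a matrix whose rows are $\sigma_i(M)L_i$ with $(L_i)$ orthonormal, and then chooses the ordering \emph{backwards}: expanding $\det M$ along the last row, it locates a column $i_0$ with $|M_{q,i_0}|\sim\sigma_q(M)$ and cofactor $\sim\sigma_1(M)\cdots\sigma_{q-1}(M)$, moves it to position $q$, and recurses on the remaining $(q-1)\times(q-1)$ block, yielding $\det(M_{ij})_{i,j\leq k}\sim D_k(M)$ for every $k$. You instead run a \emph{forward} greedy pivoting (the classical maximal-volume / rank-revealing column choice), orthogonalizing against the span of the columns already selected; the heart of your argument is the lower bound $\max_c\|P_{k-1}c\|\gtrsim\sigma_k(M)$, obtained from the min--max characterisation of $\sigma_k$ together with a dimension count, which combines with the base-times-height factorisation of the volume and Cauchy--Binet to close the induction. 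Your route avoids the SVD normalisation entirely and is self-contained modulo the corollary $D_k(M)\sim\sigma_1(M)\cdots\sigma_k(M)$, whereas the paper trades your min--max step for a Laplace expansion that is only available after reducing to orthogonal rows; both give implicit constants depending only on $p$ and $q$, as required. Two cosmetic points: the base case should read $D_1^{(1)}(M)\sim\|c_{i_1}\|=\max_j\|c_j\|\sim D_1(M)$ (these are comparisons up to a factor $\sqrt{p}$, not equalities), and when $\sigma_k(M)=0$ both sides of \eqref{eqn:rearrange-columns} vanish, so the degenerate case does not disturb the induction.
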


\begin{proof}
We claim first that it suffices to prove the result for the matrix $UM$, where $U$ is orthogonal. Indeed, denoting $M^{(\ell)}$ for the restriction of $M$ to its first $\ell$ columns, this implies, in combination with Lemma~\ref{bluebird},
$$
D_k^{(\ell)}(M) = D_k(M^{(\ell)}) \sim D_k(UM^{(\ell)}) = D_k^{(\ell)}(UM) \sim D_k(UM) \sim D_k(M),
$$
which is the desired result. {}{}{For the remainder of the proof, we write for simplicity \(\sigma_i=\sigma_i(M)\).}

We can choose $U$ as in Lemma~\ref{SVD}, in which case, assuming for instance $p \geq q$, it suffices to deal with the case
$$
M = \left( \sigma_1 L_1, \dots \sigma_q L_q, 0, \dots 0 \right)^T,
$$
where $L_i \cdot L_j = \delta_{ij}$. The $0$ entries are irrelevant, so we can assume that
$$
M =  \left( \sigma_1 L_1, \dots \sigma_q L_q \right)^T = (M_{ij})_{1 \leq i,j \leq q}.
$$
We now claim that, after permuting the columns of $M$, it can be ensured that, for any $k$, the top left square matrix of dimension $k \times k$ has nearly maximal subdeterminant:
\begin{equation}
\label{tanager}
\det (M_{ij})_{1 \leq i,j \leq k} \sim D_k(M).
\end{equation}
 The construction of the matrix permutation is iterative and proceeds as follows: expanding the determinant of $M$ with respect to the last row, we see that
$$
\sigma_1 \dots \sigma_q = \operatorname{det} M = {}{}{\sum_{i=1}^p} (-1)^{q+i} M_{q,i} \operatorname{det} M^{\{q,i\}},
$$
where $M^{\{q,i\}}$ is the matrix obtained from $M$ by removing the $q$-th {}{}{row} and the $i$-th column. Since $|M_{q,i}| \leq \sigma_q$ and $\operatorname{det} M^{\{q,i\}} \lesssim \sigma_{1} \dots \sigma_{q-1}$ for all $i$, we can find $i_0$ such that $|M_{q,i_0}| \sim \sigma_q$, and $\operatorname{det} M^{\{q,i_0\}} \sim \sigma_{1} \dots \sigma_{q-1}$. Exchanging the columns $i_0$ and $q$, the resulting matrix satisfies~\eqref{tanager} for $k = q-1$.

We now consider the matrix $N = (M_{ij})_{1 \leq i,j \leq q-1}$, which was denoted $M^{\{q,i_0\}}$ before columns were permuted. It is such that entries in the last {}{}{row }are $\leq \sigma_{q-1}$, and subdeterminants of size $q-2$ are bounded by $\sigma_1 \dots \sigma_{q-2}$. Therefore, the same argument as above can be applied, and it proves~\eqref{tanager} for $k = q-2$. An obvious induction leads to the desired statement.
\end{proof}

\subsection{{}{}{Describing some convex bodies}}

{}{}{We can use the subdeterminants studied above to describe certain convex bodies. Our first result  concerns the measure of a neighbourhood of a convex hull.}

\begin{lem}\label{lem:det_vol} Let $X$ denote the $d \times d$ matrix with columns $x^{(i)}$. Then 
	\[
	\mes
	\left\{
	\sum_{i=1}^d t_i {x}^{(i)}+ {w}, \, |t_i|\leq 1, \, |{w}|\leq 1
	\right\}
	\lesssim
	1+ \sum_{k=1}^d D_k(X) {}{}{.}
	\]
\end{lem}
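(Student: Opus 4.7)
The plan is to reduce to the case where $X$ is diagonal via the singular value decomposition, and then to compute the measure directly as a product and expand. Write $S$ for the set whose measure we wish to bound, so that $S = X[-1,1]^d + B$ with $B = \{w : |w| \leq 1\}$. Applying Lemma~\ref{SVD} to write $X = U\Sigma V$ with $U,V \in O(d)$ and $\Sigma = \operatorname{diag}(\sigma_1(X),\dots,\sigma_d(X))$, and using that both $V[-1,1]^d$ and $U^{-1}B$ are contained in the dilated cube $[-\sqrt d,\sqrt d]^d$, one reduces (up to a constant depending only on $d$) to bounding the measure of
$$
\Sigma[-1,1]^d + [-1,1]^d.
$$
This is simply a rectangular box whose $i$-th side has length $\sim \sigma_i + 1$, so its measure is comparable to $\prod_{i=1}^d(\sigma_i + 1)$.

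The next step is to expand this product as
$$
\prod_{i=1}^d (\sigma_i + 1) = \sum_{k=0}^d e_k(\sigma_1,\dots,\sigma_d),
$$
where $e_k$ denotes the $k$-th elementary symmetric polynomial. Because the $\sigma_i$ are arranged in decreasing order, every $k$-fold product $\sigma_{i_1}\cdots\sigma_{i_k}$ with $i_1<\dots<i_k$ is at most $\sigma_1\cdots\sigma_k$, while the subset $\{1,\dots,k\}$ attains this bound; hence $e_k(\sigma_1,\dots,\sigma_d) \sim \sigma_1\cdots\sigma_k$. The corollary to Lemma~\ref{bluebird} then identifies $\sigma_1(X)\cdots\sigma_k(X)$ with $D_k(X)$ up to constants, and one arrives at
$$
\mes S \lesssim \sum_{k=0}^d D_k(X) = 1 + \sum_{k=1}^d D_k(X),
$$
using the convention $D_0(X) = 1$.

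The only point requiring care is the reduction to diagonal form: the orthogonal matrices $U$ and $V$ do not preserve the cube $[-1,1]^d$ or the ball $B$ exactly, and so the inclusions above introduce $O(\sqrt d)$ losses in each coordinate. These losses are harmless for an estimate up to constants depending only on the dimension, so the reduction proceeds without issue. Beyond this, the argument is a straightforward expansion of a product of elementary symmetric polynomials, combined with the relationship between singular values and maximal subdeterminants already established in this section.
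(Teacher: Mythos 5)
Your proof is correct. It differs mildly but genuinely from the paper's argument: the paper forms the augmented $d\times 2d$ matrix $M=(X\mid \operatorname{Id})$, notes the set is comparable to $MB(0,1)$, computes $\mes MB(0,1) \sim \det\Sigma \sim D_d(M)$ via the singular value decomposition of $M$, and then bounds $D_d(X\mid\operatorname{Id}) \lesssim \sum_k D_k(X)$ by Laplace expansion (each $d\times d$ subdeterminant of $(X\mid\operatorname{Id})$ factors as a $k\times k$ subdeterminant of $X$ times a subdeterminant of the identity). You instead diagonalize $X$ alone, reduce to the box $\Sigma[-1,1]^d+[-1,1]^d$ of measure $\sim\prod_i(\sigma_i+1)$, and expand this product into elementary symmetric polynomials, using the ordering of the $\sigma_i$ to get $e_k(\sigma)\sim\sigma_1\cdots\sigma_k\sim D_k(X)$. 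The two routes are essentially dual: your expansion of $\prod(\sigma_i+1)$ plays the role of the paper's Laplace expansion of $D_d(X\mid\operatorname{Id})$. Your version is arguably a little more self-contained (no need to reason about subdeterminants of an augmented matrix), at the cost of the explicit $O(\sqrt d)$ bookkeeping when passing between cubes and balls under $U$ and $V$, which, as you note, is harmless since all constants may depend on $d$. Both arguments rest on the same two inputs from this section, Lemma~\ref{SVD} and the Corollary to Lemma~\ref{bluebird}.
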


\begin{proof}
	Let $M$ be the $d \times 2d$ matrix $(X | \operatorname{Id})$, so that the set whose measure we want to estimate can be written (up to a multiplicative constant) $M B(0,1)$. By Lemma~\ref{SVD} (singular value decomposition), we can write $M = U (\Sigma | 0) V$; then
	$$
	\mes M B(0,1) = \mes U (\Sigma | 0) V B(0,1) = \mes (\Sigma | 0)  B(0,1) \lesssim \mes \Sigma B(0,1) = \det \Sigma \sim D_d(M).
	$$
	There remains to evaluate $D_d(M)$; owing to the specific structure of $M$,
	\[
	D_d(M) = D_d( X | \operatorname{Id}) \lesssim \sum_{k=1}^d D_k(X).\qedhere
	\]
\end{proof}

{}{}{We can also describe a subset of a convex hull cut out by linear inequalities, showing that it is contained in a potentially smaller convex hull.}

\begin{lem}\label{lem:basis}
	Given linearly independent \({}{}{{v}^{(1)},\dotsc{v}^{(d)}} \in (\R^d)^d,\) and \(Y_i>0,Z_i>0\) there are \( {}{}{{w}^{(1)},\dotsc{w}^{(d)}}   \in (\R^d)^d,\) with 
	\begin{equation} \label{junco}
	w^{(i)}_j \lesssim \min (Y_i|{v}^{(i)}|,Z_j)
	\end{equation}
	such that 
	\begin{equation}
	\label{oystercatcher}
		\left\{
		{z}\in\R^d: {z}=\sum_{i=1}^d y_i {v}^{(i)}, \, 
		|y_i|\leq Y_i, \,
		|z_i|\leq Z_i
		\right\}
		\subset
		\left\{
		\sum_{i=1}^d t_i {w}^{(i)}:
		|t_i| \lesssim 1
		\right\}.
	\end{equation}
\end{lem}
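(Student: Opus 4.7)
The plan is to identify the set on the LHS of~\eqref{oystercatcher} with the image $VK$ of a convex body in $y$-space, where $V$ denotes the $d\times d$ matrix with columns $v^{(i)}$, and
\[
K := \left\{ y \in \R^d : \ |y_i| \leq Y_i \text{ for all } i, \ |(Vy)_j| \leq Z_j \text{ for all } j \right\}.
\]
It then suffices to produce $u^{(1)},\dotsc,u^{(d)} \in K$ with $K \subset C_d \cdot \{\sum_i t_i u^{(i)} : |t_i|\leq 1\}$ and set $w^{(i)} := V u^{(i)}$; any such choice automatically gives $|w^{(i)}_j| = |(Vu^{(i)})_j| \leq Z_j$.

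After WLOG reordering the $v^{(i)}$ so that $Y_1 |v^{(1)}| \leq \dotsb \leq Y_d |v^{(d)}|$, I would let
\(K_i := \{y \in K : y_k = 0 \text{ for } k > i\}\)
and choose $u^{(i)} \in K_i$ maximising $|u^{(i)}_i|$. Note $|u^{(i)}_i| > 0$, since linear independence of the $v^{(k)}$ ensures $\epsilon e_i \in K_i$ for small $\epsilon > 0$. The vanishing of $u^{(i)}_k$ for $k > i$, together with $|u^{(i)}_k| \leq Y_k$ and the ordering, gives
\[
|w^{(i)}_j| = \Bigl|\sum_{k \leq i} u^{(i)}_k v^{(k)}_j\Bigr| \leq \sum_{k \leq i} Y_k |v^{(k)}| \leq i \cdot Y_i |v^{(i)}| \lesssim Y_i |v^{(i)}|,
\]
which together with the bound $|w^{(i)}_j|\leq Z_j$ above establishes~\eqref{junco}.

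The main work is to verify the containment $K \subset C_d \cdot \{\sum t_i u^{(i)} : |t_i| \leq 1\}$. Given $y \in K$, the equation $y = \sum_i s_i u^{(i)}$ reduces (using $u^{(i)}_k = 0$ for $k > i$) to the triangular system $y_k = \sum_{i \geq k} s_i u^{(i)}_k$, which I would invert by back-substitution from $k=d$ downwards. I would prove by downward induction that $|s_i| \leq 2^{d-i}$. At the inductive step, setting $y' := y - \sum_{i > k} s_i u^{(i)}$ gives $y'_l = 0$ for $l > k$; combining the inductive bounds on $|s_i|$ with $|y_l|,|u^{(i)}_l| \leq Y_l$ and $|(Vy)_j|,|(Vu^{(i)})_j| \leq Z_j$ shows that some dimensional rescaling $y'/C$ lies in $K_k$. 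The maximality of $|u^{(k)}_k|$ over $K_k$ then bounds $|y'_k|$ by $O(|u^{(k)}_k|)$, yielding $|s_k| = |y'_k/u^{(k)}_k| \lesssim 1$.

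The main obstacle is the bookkeeping in this induction — particularly, verifying at each step that a fixed dimensional rescaling of $y'$ lies in $K_k$ with constants depending only on $d$. The cleanliness of the argument rests on two features of the construction: restricting $u^{(i)}$ to the triangular slice $K_i$ couples the bound on $|w^{(i)}_j|$ to $Y_i |v^{(i)}|$ via the ordering, while simultaneously producing a triangular linear system whose nonzero diagonal entries $u^{(i)}_i$ allow stable back-solution for the $s_i$.
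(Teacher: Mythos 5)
Your proof is correct, but it takes a genuinely different route from the paper's. The paper encodes both families of constraints into a single $2d\times d$ matrix $M=(Y^{-T}\,|\,Z^{-T})^T$, takes its singular value decomposition, defines the $w^{(i)}$ as the columns of $V^{-1}\Sigma^{-1}$, and then extracts the two bounds in~\eqref{junco} from singular-value estimates (the Courant minimax principle for $\tau_i\lesssim Y_i|v^{(i)}|$, and the block structure $W=ZU_3$ for $|w^{(i)}_j|\lesssim Z_j$). You instead work directly with the convex body $K$ in $y$-coordinates and greedily select a triangular system of near-extremal points $u^{(i)}\in K_i$, recovering the containment by back-substitution with constants $|s_k|\leq 2^{d-k}$; I checked the induction and it closes correctly ($C_k=1+\sum_{i>k}C_i=2^{d-k}$, the base case following from maximality of $|u^{(d)}_d|$ over $K$, and the rescaled vector $y'/C_k$ genuinely lies in $K_k$ because both defining families of inequalities are stable under the bounded linear combinations involved). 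Your argument is more elementary and self-contained — it avoids the SVD and minimax machinery entirely and produces $w^{(i)}$ lying in a dimensional constant times the body itself — whereas the paper's version is shorter in context because the SVD formalism is already set up and reused throughout Section 5 for the subdeterminant estimates. One small point of care in your write-up: the support condition $u^{(i)}_k=0$ for $k>i$ ties the bound $|w^{(i)}_j|\leq\sum_{k\leq i}Y_k|v^{(k)}|\leq d\,Y_i|v^{(i)}|$ to the chosen ordering $Y_1|v^{(1)}|\leq\dotsb\leq Y_d|v^{(d)}|$, so you should note explicitly that the conclusion of the lemma is invariant under simultaneously permuting the pairs $(v^{(i)},Y_i)$ and relabelling the $w^{(i)}$, which it is.
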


\begin{proof}
	
Let $Y$ be the matrix with columns $Y_i {v}^{(i)}$, which, without loss of generality, can be assumed to have nondecreasing norms. We claim that its singular values, $\tau_i$, are such that
\begin{equation}
\label{tanager0}
\tau_i \lesssim Y_i |v^{(i)}|.
\end{equation}
Indeed, by the Courant minimax principle, {}{}{$\tau_k$} can be characterized as follows
\begin{align*}
\tau_k & = \sigma_k (Y) = \min_{\dim E = d+1-k} \max_{\substack{x \in E \\ | x | =1}} | Y x| \leq \max_{\substack{x_{d-k+2} {}{}{=}\dotsb = x_{d} = 0 \\ | x | =1}} |Yx| \\
& \lesssim \max_{j = 1, \dots, d-k+1} |Y_j| |v^{(j)}| = |Y_{d-k+1}| |v^{(d-k+1)}|.
\end{align*}

Let $Z$ be the matrix $ \operatorname{diag}(Z_1,\dotsc,Z_d)$, and let {}{}{$M = ( Y^{-T} \,|\, Z^{-T} )^T \in \mathbb{R}^{2d \times d}$}. Then the set on the {}{}{left-hand side} of~\eqref{oystercatcher} is contained in $\{ z: |Mz| \lesssim 1 \}$. By Lemma~\ref{SVD}, we can write {}{}{$M = U ( \Sigma^T \,|\, 0 )^T V$}, so that the set $\{ z : |Mz| \lesssim 1 \}$ can now be written (up to a multiplicative constant)  as $W B(0,1)$, with 
\begin{equation*}
W = V^{-1} \Sigma^{-1}. 
\end{equation*}

We can now define the \({w}^{(i)}\) to be the columns of $W$; in order to establish the lemma, it suffices to prove the inequality~\eqref{junco}. Note first that 
\begin{equation} \label{tanager1}
|{w}^{(i)}|\lesssim \sigma_i(M)^{-1}. \end{equation}
Next, denoting $U = \begin{pmatrix} U_1 \; U_2 \\ U_3 \; U_4 \end{pmatrix}$ where each \(U_i\) is a \(d\times d\) matrix, we have \(Y^{-1} = U_1\Sigma V\). Therefore,
\begin{equation}
\label{tanager2}
(\tau_{d+1-i})^{-1} = \sigma_i(Y^{-1}) = \sigma_i(U_1 \Sigma V)  \lesssim \sigma_i(\Sigma) = \sigma_i(M).
\end{equation}
Combining~\eqref{tanager0}, ~\eqref{tanager1} and~\eqref{tanager2},
$$
|\vec{w}^{(i)}|\leq \sigma_i(M)^{-1} \lesssim  \tau_{d+1-i} \lesssim Y_i |v^{(i)}|.
$$

Finally, $W = Z U_3$, which gives $|w^{(i)}_j| \lesssim Z_j$.
\end{proof}

\subsection{{}{}{Extending} matrices with prescribed largest subdeterminants}

{}{}{We now start to describe the columns which may be added to a given \(p\times k\) matrix, with a prescribed effect on its singular values.}

\begin{lem}\label{lem:vol}Let $M$ be a $p \times k$ matrix, which admits a singular value decomposition as in~\eqref{eqn:normal-form}. 	{}{}{For some fixed \(C>0\), let}
$$ 
S(M,R) = {}{}{
	\{ x \in \mathbb{R}^p:D_j(M | x) \leq C D_j(M) \;\; (1 \leq j \leq \min (p,k)),\, D_{k+1}(M | x) \leq R \;\;(p \geq k+1) \} }
$$
		and set
{		\[
		\tau_i
		=
		\begin{cases}
			\sigma_i(M)&\mbox{if} \; i\leq k,
			\\
			\min (\sigma_k(M),\frac{R}{D_k(M)}) &\mbox{if} \; i\geq k+1 \; \mbox{and} \; p \geq k+1.	
		\end{cases}
		\]}
		Then, denoting $U^{(i)}$ for the columns of {}{}{the matrix $U$ from the singular value decomposition of \(M\),}
		$$
		S(M,R) \subset \left\{ \sum_{i=1}^p y_i U^{(i)} : | y_i| 	{}{}{\lesssim_{C,p,k}} \tau_i \right\}.
		$$
\end{lem}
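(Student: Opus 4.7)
The plan is to apply the singular value decomposition (Lemma~\ref{SVD}) to bring $M$ into canonical diagonal form, and then exhibit, for each coordinate $y_i$ of the expansion $x = \sum_i y_i U^{(i)} = Uy$, an explicit submatrix of the augmented matrix $(M\mid x)$ whose determinant is a product of $\sigma_j(M)$'s and $y_i$. The size conditions defining $S(M,R)$ then directly force $|y_i|\lesssim_{C,p,k} \tau_i$.

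I treat the case $p\geq k$ first, writing $M=U\bigl(\begin{smallmatrix}\Sigma\\ 0\end{smallmatrix}\bigr)V$ as in Lemma~\ref{SVD}. Splitting $y=(y',y'')$ with $y'\in\R^k$ and $y''\in\R^{p-k}$, one computes
\[
(M\mid x)=U\begin{pmatrix}\Sigma V & y'\\ 0 & y''\end{pmatrix}.
\]
Multiplying on the right by the orthogonal $(k+1)\times(k+1)$ block-diagonal matrix $\operatorname{diag}(V^{-1},1)$ and invoking Lemma~\ref{bluebird}, it suffices to bound $|y_i|$ when the matrix
\[
N=\begin{pmatrix}\Sigma & y'\\ 0 & y''\end{pmatrix}
\]
satisfies $D_j(N)\lesssim D_j(M)$ for $j\leq k$ and, if $p\geq k+1$, $D_{k+1}(N)\lesssim R$. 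The case $p<k$ is analogous, and there the second family of bounds is vacuous.

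Next, each $|y_i|$ is extracted as the determinant of an explicit upper-triangular submatrix of $N$. For $1\leq i\leq k$, the submatrix with rows $\{1,\dots,i\}$ and columns $\{1,\dots,i-1,k+1\}$ is upper triangular with diagonal $\sigma_1(M),\dots,\sigma_{i-1}(M),y_i$, so has determinant $\sigma_1(M)\cdots\sigma_{i-1}(M)\,y_i$; the bound $D_i(N)\lesssim D_i(M)\sim\sigma_1(M)\cdots\sigma_i(M)$ (using the Corollary after Lemma~\ref{bluebird}) then gives $|y_i|\lesssim \sigma_i(M)=\tau_i$. For $k+1\leq i\leq p$, the submatrix with rows $\{1,\dots,k,i\}$ and all $k+1$ columns is upper triangular of size $k+1$ with diagonal $\sigma_1(M),\dots,\sigma_k(M),y_i$, so the bound on $D_{k+1}(N)$ yields $|y_i|\lesssim R/D_k(M)$; while the submatrix with rows $\{1,\dots,k-1,i\}$ and columns $\{1,\dots,k-1,k+1\}$ produces a $k\times k$ determinant $\sigma_1(M)\cdots\sigma_{k-1}(M)\,y_i$, whence $|y_i|\lesssim \sigma_k(M)$ from the bound on $D_k(N)$. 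Taken together, these two inequalities give $|y_i|\lesssim\tau_i$.

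The only real subtlety, and what I expect to be the main obstacle, is the rank-deficient case in which $\sigma_{j_0}(M)=0$ for some smallest index $j_0\leq\min(p,k)$. In this situation $D_{j_0}(M)=0$, so the hypothesis forces $D_{j_0}(N)=0$; applying the triangular submatrix construction above, but now with rows $\{1,\dots,j_0-1,i\}$ and columns $\{1,\dots,j_0-1,k+1\}$, produces the determinant $\sigma_1(M)\cdots\sigma_{j_0-1}(M)\,y_i$, which must vanish and thereby forces $y_i=0$ for every $i\geq j_0$. Under the convention $0^{-1}0=0$ from the Corollary after Lemma~\ref{bluebird}, $\tau_i$ also equals zero in this range, so the stated conclusion persists.
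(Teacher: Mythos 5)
Your proof is correct, and while it shares the paper's overall framework (singular value decomposition plus the orthogonal invariance of the $D_j$ from Lemma~\ref{bluebird}, then reading off coordinate bounds from subdeterminants of $(M\mid x)$), the technical core is genuinely different and simpler. The paper only normalizes on the left, reducing to $U=\operatorname{Id}$, and therefore must work with the block $\Sigma V$; to extract $|x_i|\lesssim \sigma_i(M)$ for $i\leq k$ it runs an induction on $i$, at each step selecting an $n\times n$ submatrix $\widetilde V$ of $V$ with determinant $\sim 1$ and performing a Laplace expansion of the resulting $\widetilde M$. Your extra right-multiplication by the orthogonal block matrix $\operatorname{diag}(V^{-1},1)$ -- legitimate by Lemma~\ref{bluebird}(ii), since it changes each $D_j(M\mid x)$ only by bounded factors -- strips out $V$ entirely, so that every needed inequality becomes the determinant of an explicit upper-triangular minor of $\bigl(\begin{smallmatrix}\Sigma & y'\\ 0 & y''\end{smallmatrix}\bigr)$ and no induction is required. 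What this buys is a shorter and more transparent argument; what the paper's version buys is essentially nothing extra here, so your route is arguably preferable. Your explicit handling of the rank-deficient case $\sigma_{j_0}(M)=0$ (forcing $y_i=0$ for $i\geq j_0$, matching $\tau_i=0$ under the convention $0^{-1}0=0$) is also a welcome addition: the paper leaves this degenerate situation implicit, even though the divisions by $\sigma_1(M)\cdots\sigma_{i-1}(M)$ require it to be addressed. The only bookkeeping point to make explicit in a final write-up is that for $i\leq k$ the hypothesis $D_i(M\mid x)\leq C\,D_i(M)$ is indeed available, i.e.\ $i\leq\min(p,k)$; this holds in both the case $p\geq k+1$ and the case $p\leq k$, where as you note the $D_{k+1}$ condition is vacuous.
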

	
\begin{proof}
	{}{}{In the proof we allow all implicit constants in \(\lesssim,\sim\) notation to depend on \(C,p,k\).}
	
	\underline{Step 1: $p \geq k+1$ and $U = \operatorname{Id}$}.  Then the singular value decomposition of $M$ is $M = \begin{pmatrix} \Sigma \\ 0 \end{pmatrix} V$ and
$$
(M | x) = \begin{pmatrix}
\Sigma V & x' \\ 0 & x'' \end{pmatrix}, \qquad  x = \begin{pmatrix} x' \\ x'' \end{pmatrix}.
$$
If $x \in S(M,R)$, it is immediate that $|x''| \lesssim \frac{R}{D_k(M)}$, by considering submatrices consisting of the first $k$ {}{}{rows,} together with one of the last $p-k$ lines). Furthermore, by considering submatrices consisting of a $(k-1) \times (k-1)$ submatrix of $\Sigma V$, one of the $p-k$ last {}{}{rows,} and the last column, we have
$$
| x''| D_{k-1}(M) \lesssim D_k(M).
$$
{}{}{It follows that \( |x''| \lesssim \sigma_k(M).\)}

We will now focus on the $k \times (k+1)$ matrix made up of the first $k$ {}{}{rows }of $(M | x)$, namely
$$
 \begin{pmatrix} \sigma_1(M) V^{(1)} &  x_1 \\ \vdots & \vdots \\ \sigma_k(M) V^{(k)}  & x_k \end{pmatrix}
$$
(where $V^{(i)}$ stands for the $i$-th {}{}{row }of $V$). We now prove by induction on $n$ that $|x_i| \leq \sigma_i(M)$ if $i \leq n$; this assertion for $n=k$ is the desired result. The case $n=1$ being immediate, we can assume the assertion holds at rank $n$, and aim at proving it at rank $n+1$.
		
		{}{}{
		The $n$ first rows of $V$ are orthogonal, therefore we can delete the last \(n-k\) rows and some \(k-n\) columns of \(V\)
		to get an  $n \times n$ matrix with a determinant $\sim 1$; denote this matrix \(\widetilde{V}\) and its rows $\widetilde{V}^{(1)},\dotsc,\widetilde{V}^{(n)} $.}
		
		{}{}{
		Note that the \(n\times n\) matrix with rows  
		$ \sigma_1(M) \widetilde{V}^{(1)}  ,\dotsc, \sigma_n(M) \widetilde{V}^{(n)} $ has determinant $\sim D_n(M)$.}
			
		{}{}{
		We now consider the submatrix of \(M\) obtained by deleting the last \(n-k-1\) rows and the same columns that were deleted from \(V\) to make \(\widetilde{V}\). That is,
		$$
		\widetilde{M} = \begin{pmatrix} \sigma_1(M) \widetilde{V}^{(1)}  & x^1 \\ \vdots & \vdots  \\ \sigma_n(M) \widetilde{V}^{(n)} & x^n \\ \sigma_{n+1}(M) \widetilde{V}^{(n+1)} & x^{n+1}  \end{pmatrix}.
		$$
		We further {}{}{write} $\widetilde{M}^{\{i , n+1 \}}$ for the matrix $\widetilde{M}$ with $i$-th row and last column removed.} Expanding the determinant of $\widetilde{M}$ with respect to the last column, we find that
$$
\det(\widetilde{M}) = \sum_{i=1}^{n+1} (-1)^{i+n+1} x^i \det \widetilde{M}^{\{i , n+1 \}} + x^{n+1} \det \widetilde{M}^{\{n+1, n+1 \}}.
$$
By the induction assumption, $|x_i| \leq \sigma_i(M)$ for $1 \leq i \leq n$. Furthermore, $\det \widetilde{M}^{\{i , n+1 \}}  \lesssim \frac{D_{n+1}(M)}{\sigma_i(M)}$ for  $1 \leq i \leq n$, and we saw that $\det \widetilde{M}^{\{n+1, n+1 \}} \sim D_n(M)$. Finally, the definition of $S(M,R)$ requires that $\det(\widetilde{M}) \lesssim D_{n+1}(M)$. Combining these observations and the above equality implies that, if $x \in S(M,R)$,
		$$
		|x^{n+1}| D_n(M) \lesssim D_{n+1}(M), \qquad \mbox{i.e.} \qquad |x^{n+1}| \lesssim \sigma_{n+1}(M).
		$$
		
		\medskip \noindent \underline{Step 2: general case $p \geq k+1$.} Then the singular value decomposition of $M$ is $M = U \begin{pmatrix} \Sigma \\ 0 \end{pmatrix} V$. Setting $y = U^{-1} x$, we can write
		$$
		D_j(UDV | x) = D_j (U (DV | y) ) \sim D_j(DV|y), \qquad \mbox{where} \qquad D = \begin{pmatrix} \Sigma \\ 0 \end{pmatrix}.
		$$
Then $x \in S(M,R)$ if and only if $y \in S(DV,R) \subset \{y: |y_i| \lesssim \tau_i \}$. The desired result follows for $x = Uy$.

\medskip \noindent \underline{Step 3: the case $p \leq k$.} Similarly to the case $p \geq k+1$, one deals first with $U = \operatorname{Id}$. Then
$$
M = (\Sigma | 0) V = (\Sigma V_1)
$$
(where $V_1$ is the {$p \times k$ upper submatrix of $V$}). Then
$$
(M|x) = ( \Sigma V_1 | x).
$$
Proceeding as in Step 1, one can deduce that $|x_i| \lesssim \tau_i$ if $1 \leq i \leq p$, and the desired conclusion follows as in Step 2.
\end{proof}

{}{}{We can apply the last lemma to Problem~\ref{countingmatricesproblem}, with some technical complexity coming from the constant entries in the last row of the matrix \(P\) appearing there. In the following lemma one should think of \(M\) as  the first \(k\) columns of \(P\), and \(x\) as being a column \((m_{1(k+1)}^2, \dotsc, m_{d(k+1)}^2, \lambda^2)^T\) to be adjoined to the matrix \(M\). As the \(m_{i(k+1)}\) range over integers of size \(\sim \mu_i\), the vector \(\mathcal{M}{x} = (m_{1(k+1)}^2\mu_1^{-1}, \dotsc, m_{d(k+1)}^2\mu_d^{-1})^T\) then takes values which are separated from each other by distances \(\gtrsim 1\). In section~\ref{cmwps} we will use this to bound the number of integral \(m_{i(k+1)}\) by the measure of a neighbourhood of the permissible real vectors \(\mathcal{M}{x}\). It is this measure that is estimated in \eqref{eqA}.}

\begin{lem}\label{lem:vol_I}
Adopting the notation of Lemma~\ref{lem:vol}, let \(\mu_1\geq \dotsc \geq \mu_{p-1} >0\) and let $\mathcal{M}$ be the $(p-1) \times p$ matrix defined by
$$
\mathcal{M}= ( \operatorname{diag}(\mu_1^{-1}, \dots , \mu_{p-1}^{-1}) | 0).
$$
As in Lemma~\ref{lem:vol} let $M$ be a $p \times k$ matrix, 	{}{}{fix \(C>0\) and put}
$$ 
S(M,R) = {}{}{
\{ x \in \mathbb{R}^p:D_j(M | x) \leq C D_j(M) \;\; (1 \leq j \leq \min (p,k)),\, D_{k+1}(M | x) \leq R \;\;(p \geq k+1) \} },
$$
and let \(
\tau_i
=
	\sigma_i(M)\) or \(
	\min (\sigma_k(M),\frac{R}{D_k(M)})\) for \(i\leq k\) or \(i>k\) respectively. Then, for any $A>0$, if $M_{p1}>\epsilon \sigma_{1}(M)$ for some $\epsilon >0$, then
\begin{equation}
\label{eqA}
\mes \{\mathcal{M}{x}+ {w}:{x}\in S(M,R), \, x_p=A,\,{}{}{|x_i| \in [\tfrac{\mu_i^2}{C}, C \mu_i^2]} \;(i<p) ,\, |{w}|\leq 1\}
{}{}{\lesssim_{\epsilon,p,k,C}} 1 + \sum_{k=1}^{p-1} D_k(\widetilde{W}),
\end{equation}
where $\widetilde{W}$ is a $(p-1) \times (p-1)$ matrix with entries such that
$$
| \widetilde{W}_{ij} | {}{}{\lesssim_{p,k,C}} \mu_j^{-1} \min (\tau_{i+1},\mu_j^2),
$$
so that \(D_k(\widetilde{W}) {}{}{\lesssim_{p,k,C}} \max_{\substack{ i_1,\dotsc,i_k\textup{ distinct}\\j_1,\dotsc,j_k\textup{ distinct} }}
\prod_{\ell=1}^{k}
\mu_{j_\ell}^{-1} \min (\tau_{i_\ell+1},\mu_{j_\ell}^2)\).
\end{lem}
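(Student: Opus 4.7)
The plan is to prove Lemma~\ref{lem:vol_I} by combining the SVD-based parametrization from Lemma~\ref{lem:vol} with the hypothesis \(M_{p1}>\epsilon\sigma_1(M)\) to eliminate one degree of freedom from the constraint \(x_p=A\), then applying Lemmas~\ref{lem:basis} and~\ref{lem:det_vol} to control the measure.

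First, by Lemma~\ref{lem:vol}, any \(x\in S(M,R)\) can be written \(x=\sum_{i=1}^p y_iU^{(i)}\) with \(|y_i|\lesssim \tau_i\), where the \(U^{(i)}\) are the columns of the orthogonal matrix \(U\) appearing in the SVD of \(M\). I would use the hypothesis \(M_{p1}>\epsilon\sigma_1(M)\) to deduce \(|U_{p1}|\gtrsim_\epsilon 1\): the condition forces the first column of \(M\) to have its \(p\)-th entry comparable to its norm \(\sigma_1(M)\), and (after choosing the SVD appropriately when singular values coincide, using the rotational freedom within each eigenspace) this aligns \(U^{(1)}\) so that \(|U_{p1}|\) is bounded below. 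The constraint \(x_p=A\), i.e.\ \(\sum_iy_iU_{pi}=A\), can then be solved for \(y_1\), giving
\[\mathcal{M}x=\frac{A}{U_{p1}}\mathcal{M}U^{(1)}+\sum_{i=1}^{p-1}y_{i+1}\tilde v^{(i)},\qquad \tilde v^{(i)}=\mathcal{M}\Big(U^{(i+1)}-\frac{U_{p(i+1)}}{U_{p1}}U^{(1)}\Big).\]
The constant term contributes only a translation, which does not affect the Lebesgue measure of the \(1\)-neighbourhood we are computing. Moreover the vectors \(\tilde v^{(i)}\in\R^{p-1}\) satisfy the per-coordinate bound \(|\tilde v^{(i)}_j|\lesssim_\epsilon \mu_j^{-1}\), which follows immediately from \(|U_{j\ell}|\leq 1\) (since \(U\) is orthogonal) combined with the lower bound on \(|U_{p1}|\).

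Next, I would apply Lemma~\ref{lem:basis} to the parametrization \(\sum_{i=1}^{p-1}y_{i+1}\tilde v^{(i)}\) with coefficient bounds \(|y_{i+1}|\lesssim\tau_{i+1}\) and, arising from \(|x_j|\leq C\mu_j^2\), coordinate bounds \(|(\mathcal{M}x)_j|\lesssim \mu_j\). The output is a set of vectors whose entries form the columns of the matrix \(\widetilde W\), satisfying
\[|\widetilde W_{ij}|\lesssim \mu_j^{-1}\min(\tau_{i+1},\mu_j^2),\]
the two arguments of the minimum arising respectively from the coefficient bound (via \(\tau_{i+1}|\tilde v^{(i)}_j|\lesssim\tau_{i+1}/\mu_j\)) and from the box constraint. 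Finally Lemma~\ref{lem:det_vol} bounds the measure of the \(1\)-neighbourhood of the resulting parallelotope by \(1+\sum_{k=1}^{p-1}D_k(\widetilde W)\), yielding the conclusion; the stated bound on \(D_k(\widetilde W)\) itself follows from the Leibniz formula applied to the entrywise estimate on \(\widetilde W\).

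The main obstacle is to combine the coefficient and coordinate bounds so as to extract the tight per-coordinate estimate \(\tau_{i+1}/\mu_j\), rather than the weaker \(\tau_{i+1}/\min_j\mu_j\) that would result from applying Lemma~\ref{lem:basis} directly with the Euclidean norm of \(\tilde v^{(i)}\). This requires exploiting the particular coordinate-wise structure \(|\tilde v^{(i)}_j|\lesssim\mu_j^{-1}\), together with careful handling of the translated box constraint after shifting out the constant term \(\frac{A}{U_{p1}}\mathcal{M}U^{(1)}\).
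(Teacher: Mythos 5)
Your overall architecture matches the paper's: parametrize $S(M,R)$ via Lemma~\ref{lem:vol}, use the hypothesis on $M_{p1}$ to eliminate one coefficient through the constraint $x_p=A$, then invoke Lemma~\ref{lem:basis} followed by Lemma~\ref{lem:det_vol}. However, there are two genuine gaps.

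First, the deduction ``$M_{p1}>\epsilon\sigma_1(M)\Rightarrow|U_{p1}|\gtrsim_\epsilon 1$'' is false, and the appeal to rotational freedom in degenerate eigenspaces does not rescue it. Take $p=k=2$ and $M=\left(\begin{smallmatrix}0&10\\9&0\end{smallmatrix}\right)$: here $M_{21}=9>\tfrac12\sigma_1(M)=5$, the singular values $10,9$ are distinct so $U$ is forced (up to signs) to be the identity, and $U_{p1}=0$. What the hypothesis actually gives, writing $M_{p1}=\sum_i\sigma_i(M)V_{i1}U^{(i)}_p$, is the existence of \emph{some} index $i_0$ with both $|U^{(i_0)}_p|\gtrsim_\epsilon 1$ and $\sigma_{i_0}(M)\gtrsim_\epsilon\sigma_1(M)$; one must eliminate $y_{i_0}$ rather than $y_1$. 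The second condition $\sigma_{i_0}(M)\gtrsim_\epsilon\sigma_1(M)$ is then needed: after re-indexing, the surviving coefficients with $i<i_0$ carry the bound $\tau_i$ rather than $\tau_{i+1}$, and one recovers $\tau_{i+1}$ only because $\tau_1,\dots,\tau_{i_0}$ are all comparable (each is trapped between $\sigma_{i_0}(M)$ and $\sigma_1(M)$). This is how the paper proceeds.

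Second, the issue you flag as ``the main obstacle'' is a real gap, not a technicality: Lemma~\ref{lem:basis} as stated only yields $|w^{(i)}_j|\lesssim\min(Y_i|v^{(i)}|,Z_j)$ with the Euclidean norm of $v^{(i)}$, so applying it after rescaling by $\mathcal{M}$ gives $\tau_{i+1}/\mu_{p-1}$ in place of $\tau_{i+1}/\mu_j$, which is too weak. The fix is to reverse the order of operations: apply Lemma~\ref{lem:basis} in the unrescaled coordinates, where the vectors $P[U^{(i)}-\frac{U^{(i)}_p}{U^{(i_0)}_p}U^{(i_0)}]$ have norm $O_\epsilon(1)$ and the box is $|x_j|\lesssim\mu_j^2$, obtaining $|w^{(i)}_j|\lesssim\min(\tau_{i+1},\mu_j^2)$; only then apply the diagonal map $\mathcal{M}$, which multiplies the $j$-th row by $\mu_j^{-1}$ and produces exactly the claimed entrywise bound. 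Relatedly, your translation argument for $A\neq 0$ leaves the box constraint off-centre (the shift $\frac{A}{U^{(i_0)}_p}\mathcal{M}U^{(i_0)}$ need not be small compared with $\mu_j$), whereas taking differences of two elements of the set — which also converts $|x_i|\sim\mu_i^2$ into $|x_i|\lesssim\mu_i^2$ — reduces cleanly to $A=0$ with a centred box.
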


\begin{proof} 
	{}{}{In the proof we allow all implicit constants in \(\lesssim,\sim\) notation to depend on \(C,p,k\).} {Taking the difference of two vectors in the set on the right-hand side of~\eqref{eqA}, we see that it suffices to prove the desired statement for $A=0$, and the condition $|x_i| \sim \mu_i^2$ replaced by $|x_i| \lesssim \mu_i^2$. In other words, it suffices to prove 
$$
\mes \{\mathcal{M}{x}+ {w} : {x}\in S(M,R), \, x_p=0,\, |x_i| \lesssim \mu_i^2 \;(i<p) ,\, |{w}|\leq 1\}
\lesssim_\epsilon 1 + \sum_{k=1}^{p-1} D_k(\widetilde{W}).
$$}
Define the projector $P$ on the first $p-1$ coordinates of a vector of $\mathbb{R}^p$:
$$
P( (x_1 ,\dots, x_p)^T) = (x_1, \dots, x_{p-1}, 0)^T.
$$
Let \(U, V\) be matrices as in \eqref{eqn:normal-form} and let \(U^{(i)}\) be the \(i\)th column of \(U\). Since $M_{p1}>\epsilon \sigma_{1}(M)$ there is \(i_0\) such that \([MV^{-1}]_{pi_0}\gtrsim \epsilon\sigma_{1}(M)\), that is to say \(U^{(i_0)}_p\sigma_{i_0}(M)\gtrsim \epsilon\sigma_{1}(M)\), whence \(U^{(i_0)}_p\gtrsim_\epsilon 1\) and \(\sigma_{i_0}(M)\gtrsim_\epsilon \sigma_{1}(M)\).

By Lemma~\ref{lem:vol},
$$
\left\{ x \in S(M,R) : x_p=0 \right\} \subset \left\{x:x=\sum_{i=1}^p y_i U^{(i)} : |y_i| \lesssim \tau_i, \, x_p=0 \right\}.
$$
Since the $p$-th coordinate of $\sum_{i=1}^p y_i U^{(i)}$ is $0$, we find that
$$
P \left[ \sum_{i=1}^p y_i U^{(i)} \right] = \sum_{i =1}^{p-1}  \widetilde{ y_i} \widetilde{U}^{(i)},
$$
where
\begin{align*}
 \widetilde{U}^{(i)} &= P \left[ U^{(i)} - \frac{U^{(i)}_p}{U^{(i_0)}_p} U^{(i_0)} \right],
 &  \widetilde{ y_i} &= y_{i}
&(i<i_0),
\\
 \widetilde{U}^{(i)} &= P \left[ U^{(i+1)} - \frac{U^{(i+1)}_p}{U^{(i_0)}_p} U^{(i_0)} \right],
&  \widetilde{ y_i} &= y_{i+1}
&(i\geq i_0),
\end{align*}
and our choice of \(i_0\) above ensures that 
 \(| \widetilde{U}^{(i)} |\lesssim_\epsilon 1\). 
Therefore,
$$
\{ P x : x \in S(M,R), \, x_p=0 \} \subset \left\{x:x= \sum_{i=1}^{p-1} \widetilde{y_i} \widetilde{U}^{(i)}, \, |\widetilde{y_i}| \lesssim \tau_{i+1} \right\}.
$$
We now add the condition  $|x_i| \lesssim \mu_i^2$ to obtain
\begin{align*}
& \{ Px : x\in S(M,R), \, |x_i| \lesssim \mu_i^2 \, \,\mbox{if $1 \leq i \leq p-1$}, \, x_p =0\} \subset 
\left\{x: x= \sum_{i=1}^{p-1} \widetilde{y_i} \widetilde{U}^{(i)}, \, |\widetilde{y_i}| \lesssim \tau_{i+1},\, |x_i| \lesssim \mu_i^2 \right\} \\
& \qquad \qquad \qquad \subset \left\{ \sum_{t=1}^{p-1} t_i w^{(i)}: |t_i| \leq 1 \right\} \qquad \mbox{with} \;\;\; |w^{(i)}_j| \lesssim \min (\tau_{i+1}, \mu_j^2),
\end{align*}
where the last inclusion is a consequence of Lemma~\ref{lem:basis}. Applying the matrix $\mathcal{M}$, we see that
\begin{align*}
& \{ \mathcal{M}x:x\in S(M,R), \, |x_i| \lesssim \mu_i^2 \,\, \mbox{if $1 \leq i \leq p-1$}, \, x_p =0\} \\
& \qquad \qquad  \subset \left\{ \sum_{t=1}^{p-1} t_i \widetilde{w}^{(i)} : |t_i| \leq 1 \right\}  \qquad \mbox{with} \;\;\; |\widetilde{w}^{(i)}_j| = | [\mathcal{M} w^{(i)}]_j |\lesssim \mu_j^{-1} \min (\tau_{i+1}, \mu_j^2).
\end{align*}
Finally, Lemma~\ref{lem:det_vol}  gives the desired conclusion.
\end{proof}

\section{Proof of Theorem~\ref{thmgeometryofnumbers}}\label{sec:geomofnumthm}

In this section we will prove the following result.

\begin{thm}\label{thm:generic-L-infty}
	The following holds for any fixed off-diagonal coefficients \(\beta_{ij}=\beta_{ji}\in [-\frac{1}{10d^2},\frac{1}{10d^2}]\) \((i<j)\) and   
almost all $(\beta_{11},\dotsc,\beta_{dd})^T \in [1,2]^d$. Moreover it also holds for almost all symmetric matrices with \(\beta_{ji}\in [-\frac{1}{10d^2},\frac{1}{10d^2}]\) for \(i\neq j\) and \(\beta_{11},\dotsc,\beta_{dd}\in[1,2]\).

For every \(b\in\N\), every \(\delta <1<\lambda\) and  any $\epsilon>0$, we have
\begin{equation}\label{eq:thm-sharp}
	\lVert P_{\lambda,\delta} \rVert_{L^1\to L^\infty} \lesssim_{\beta,\epsilon} \delta^{-1/b} \lambda^{\epsilon} \left[ \max_{0 \leq b_2 \leq \min (b,d+1)}  \delta^{ b_2} \lambda^{-b_2^2 + (b+d) b_2 + {1-b}} \right]^{1/b}.
\end{equation}

It follows in particular that, for every $a\in\N$ and whenever \(\lambda^{-a}\leq \delta\leq \lambda^{1-a}\), we have
\begin{equation}\label{eq:thm-blunt}
\lVert P_{\lambda,\delta} \rVert_{L^1\to L^\infty}  \lesssim_{\beta,\epsilon} 
\begin{cases}
\delta^{1- \frac 1 {d+1-a}} \lambda^{d - 1 + {\frac{1}{d+1-a}} + \epsilon}
&(a<d
),
\\
{\delta^{1-\frac {1+a} {d+1+a}} \lambda^{ d -1+\frac{1+a}{d+1+a}+ \epsilon}}
&( a\geq d).
\end{cases}
\end{equation}
\end{thm}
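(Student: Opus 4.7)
The plan is to combine a Borel--Cantelli reduction with a linear-algebraic counting argument. First, I would use the Borel--Cantelli lemma (in the form of Appendix~\ref{appendix}) to reduce the almost-sure statement~\eqref{eq:thm-sharp} to an integrated moment bound of the type
$$
\int \int_{\lambda_0}^{2\lambda_0} \|P_{\lambda,\delta}\|^b_{L^1\to L^\infty}\, d\lambda\, dB
\lesssim_{\beta,\epsilon} \lambda_0^\epsilon \max_{0\leq b_2 \leq \min(b,d+1)} \delta^{b_2}\lambda_0^{-b_2^2 + (b+d)b_2 + 1 - b},
$$
averaged simultaneously over $\beta$ and over $\lambda$ in a dyadic window. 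The gain of a factor $\delta$ coming from the $\lambda$-integration (because each admissible lattice vector contributes only to an interval of $\lambda$'s of length $\sim\delta$) is what eventually produces the $\delta^{-1/b}$ factor in~\eqref{eq:thm-sharp} after taking $b$-th roots, while the usual Borel--Cantelli loss supplies the $\lambda^\epsilon$.

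Since $\|P_{\lambda,\delta}\|_{L^1 \to L^\infty}$ counts, up to constants, the lattice vectors $k \in \mathbb{Z}^d$ with $\sqrt{\mathcal{Q}(k)} \in [\lambda-\delta, \lambda+\delta]$, expanding the $b$-th power and carrying out the $\lambda$-integration reduces the moment to counting $b$-tuples $(k_1,\ldots,k_b) \in (\mathbb{Z}^d)^b$ for which the values $\sqrt{\mathcal{Q}(k_j)}$ all lie in a common interval of length $O(\delta)$, weighted by the measure of admissible $\beta$. The off-diagonal coefficients $\beta_{ij}$ ($i\neq j$) appear only through small perturbations and I would integrate them out first, leaving a system of $b$ slab conditions of width $\sim\lambda\delta$ that are linear in the diagonal coefficients $\beta_{ii}$. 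Decomposing the $b$-tuple sum dyadically according to the sizes of the entries $|m_{ij}|\sim \mu_i$ and of the subdeterminants of the $(d+1)\times b$ matrix $P$ from Problem~\ref{countingmatricesproblem} then reduces the moment to that counting problem, weighted by an appropriate power of $\lambda\delta$ divided by a subdeterminant of $P$.

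To solve the counting problem, I would adjoin the columns of $P$ one at a time and apply Lemma~\ref{lem:vol_I}: given the first $k$ columns with prescribed subdeterminant scales, that lemma bounds the measure of admissible \emph{real} vectors for the next column. Since distinct integer columns give rise to rescaled images $(m_{1j}^2/\mu_1,\ldots,m_{dj}^2/\mu_d)$ separated by distances $\gtrsim 1$, the measure estimate converts directly to a lattice-point count; iterating over $k = 0, 1, \ldots, b-1$ produces a product formula for the contribution of each dyadic cell in terms of the prescribed dyadic scales.

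The main obstacle, as flagged in the introduction, lies in the final step: optimising this product over all admissible choices of dyadic scales, subject to the monotonicity and compatibility constraints on singular values and subdeterminants, and collapsing the outcome to the clean maximum appearing in~\eqref{eq:thm-sharp}. I anticipate that the extremiser is parametrised by an integer $b_2 \in \{0,1,\ldots,\min(b,d+1)\}$ interpreted as a ``rank cutoff'' at which the prescribed subdeterminants of $P$ saturate their $\lambda$-dependent upper bound arising from the constant bottom row. Finally, \eqref{eq:thm-blunt} should follow by specialising $\delta \sim \lambda^{-a'}$ with $a-1 \leq a' \leq a$ in~\eqref{eq:thm-sharp} and choosing $b$ to balance the factor $\delta^{-1/b}$ against the maximum; two different candidate extremisers dominate in the two regimes, which accounts for the split $a<d$ versus $a\geq d$ in~\eqref{eq:thm-blunt}.
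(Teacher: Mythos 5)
Your plan follows the paper's proof essentially step for step: the same moment-plus-Borel--Cantelli reduction (with the $\delta^{-1/b}$ arising exactly as you describe from the $\lambda$-integration), the same dyadic reduction to Problem~\ref{countingmatricesproblem}, the same column-by-column count via Lemma~\ref{lem:vol_I} and the $1$-separation of the rescaled squares, and the same final optimisation with extremiser indexed by $b_2$ and choice of $b$ giving the two regimes of~\eqref{eq:thm-blunt}. The one small divergence is that the paper does not integrate out the off-diagonal coefficients but holds them fixed inside a measure estimate for $(\beta_{11},\dotsc,\beta_{dd},\lambda)$ that is uniform in them, which is what yields the stronger ``any fixed off-diagonal'' clause of the theorem; integrating them out first, as you propose, would only deliver the ``almost all symmetric matrices'' version.
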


We emphasize that \eqref{eq:thm-blunt} is intended as illustrative; one could prove a stronger but less tidy result just by making a more careful choice of the parameter \(b\) at the end of the proof.

By contrast it seems more challenging to improve \eqref{eq:thm-sharp} using our methods. See Remark~\ref{rem:not-optimal} for one idea.

Before proceeding to the proof we deduce  Theorem~\ref{thmgeometryofnumbers}.

\begin{proof}[Proof of Theorem~\ref{thmgeometryofnumbers}]
	Equations \eqref{eq:thm_blunt_main} and \eqref{eq:thm_blunt_edge} are cases of \eqref{eq:thm-blunt}. For \eqref{eq:thm_delta_very_small}, observe that if
	\(
	\|P_{\lambda,\delta}\|
	\)
	is sufficiently large in terms of the function \(\chi\), then there are two integer vectors with
	\[
	{\mathcal{Q}(x^{(1)})}^{1/2}, {\mathcal{Q}(x^{(2) })}^{1/2}\in [\lambda-\delta,\lambda+\delta],
	\quad\text{and}\quad
	|x^{(1)}_i|\neq |x^{(2)}_i|
	\quad\text{ for some \(i\).}
	\]
	Letting \(y_{ij}=x^{(1)}_ix^{(1)}_j-x^{(2)}_ix^{(2)}_j\) we find that
	\begin{align}	
		y_{ij}&\in\Z,
		&|y_{ij}|&\lesssim \lambda^2,
		&y_{ii}&\neq 0\text{ for some }i,
		&\Big\lvert
		\sum_{i.j} \beta_{ij}y_{ij}
		\Big\rvert&\lesssim \delta\lambda.
		\label{eq:mat_y}
	\end{align}
	 For a given matrix \(y_{ij}\) and for any off-diagonal coefficients \(\beta_{ij}=\beta_{ji}\) \((i<j)\) we have
	\[
	\mes\Big\{
	(\beta_{11},\dotsc,\beta_{dd})^T\in[1,2]^d :
	\Big\lvert
	\sum_{i.j} \beta_{ij}y_{ij}
	\Big\rvert\leq \delta\lambda
	\Big\}
	\lesssim
	\frac{\delta\lambda}{\max\{|y_{ij}|\} }.
	\]
	We claim that
	\[
	\#\{(y_{ij}) : y_{ij}=x^{(1)}_ix^{(1)}_j-x^{(2)}_ix^{(2)}_j\text{ for some }x^{(i)}\in \Z^d,\text{ and }
	0<\max\{|y_{ij}|\}\leq Y \}
	\lesssim_\epsilon Y^{d+\epsilon}.
	\]
	For the proof, note that we cannot have every \(y_{ii}=0\) or else \((y_{ij})\) would vanish. We assume without loss of generality that there is \(i_0\in\{1,\dotsc,d\}\) such that \(y_{ii}=0\) iff \(i> i_0\). There are \(O(Y^{i_0})\) possible values of  \(y_{ii}\) for \(i\leq i_0\), and once these are chosen the identity \(y_{ii} =(x^{(1)}_i+x^{(2)}_i)(x^{(1)}_i-x^{(2)}_i) \) determines  \(x^{(1)}_i,x^{(2)}_i\) for \(i\leq i_0\) up to \(Y^\epsilon\) possibilities.  Next for \(i>i_0\) we have  \(x^{(1)}_i=\pm x^{(2)}_i\), and so up to finitely many choices both of these are determined  by the values of \(y_{1i} = (\pm x^{(1)}_1\pm x^{(2)}_1)x^{(1)}_i\), for which there are \(O(Y^{d-i_0})\) possibilities. We conclude that there are \(\lesssim_\epsilon Y^{d+\epsilon}\) choices for the \(x^{(i)}\) and hence for \((y_{ij})\).
	
	We can now conclude that, for a suitably large  constant \(C\) depending only on the cutoff function \(\chi\), and for any fixed values of the off-diagonal entries \(|\beta_{ij}|\leq \frac{1}{10d^2}\) (\(i< j\)), we have
	\begin{align*}
	\mes \{ 
	(\beta_{11},\dotsc,\beta_{dd})^T\in[1,2]^d :
	\|P_{\lambda,\delta}\|>C\text{ for some } \lambda\sim \lambda_0, \delta < \delta_0
	\}
	&\lesssim
	\sum_{\substack{(y_{ij})\\ 0<\max\{|y_{ij}|\}\lesssim \lambda_0^2 }}
	\frac{\delta_0\lambda_0}{\max\{|y_{ij}|\} }
	\\&
	\lesssim_\epsilon
	\sum_{\substack{Y=2^k \\ k\in \Z \\ Y\lesssim \lambda_0^2}}
	\delta_0\lambda_0Y^{d-1+\epsilon}
	.
	\end{align*}
	This is \(O_\epsilon(\delta_0\lambda_0^{2d-1})\). Applying the Borel-Cantelli lemma (Lemma~\ref{borelcantelli}) proves \eqref{eq:thm_delta_very_small}. 
\end{proof}

We now begin the proof of Theorem~\ref{thm:generic-L-infty}. Throughout the rest of this section we write \(\beta_i\) for \(\beta_{ii}\), put \(\beta'=(\beta_1,\dotsc \beta_d)^T\), and 
given $b,d\in \mathbb{N}$ and $M = (m_{ij})_{1\leq i\leq d,\,1\leq j\leq b}$, we put
\begin{equation*}
	P(M)
	=
	(p_{ij}(M))_{1\leq i \leq d+1,\,1\leq j\leq b}
	=
	\left(
	\begin{matrix}
		m^2_{11}&\cdots&m^2_{1b}\\
		\vdots&\ddots&\vdots\\
		m^2_{d1}&\cdots&m^2_{db}\\
		\lambda_0^2&\cdots&\lambda_0^2
	\end{matrix}
	\right).
\end{equation*}

\subsection{Integrating over $\lambda$ and $\beta$}

Our key observation is as follows. {}{}{Since, for \(m\in\mathbb{Z}\), we have \(1\leq \sum_{\mu \in 2^{\mathbb{N}}\cup \{0\}} \mathbf{1}_{\mu\leq 2m\leq 2\mu},\) and since \(\chi\) takes non-negative values, we have for any $\lambda,\delta>0$ that}
%
%
\begin{align}
	\MoveEqLeft[4]
	\int_{[1,2]^d}
	\int_{\lambda_0/2}^{\lambda_0}
	\lVert P_{\lambda,\delta} \rVert_{L^1\to L^\infty}^b
	\, d \lambda
	\, d {\beta'}
	\nonumber
	\\
	&=
	\int_{[1,2]^d}
	\int_{\lambda_0/2}^{\lambda_0}
	\bigg(
	\sum_{\substack{\lambda_0 \geq\mu_1,\dotsc, \mu_d \geq 0\\ \mu_i \in 2^{\mathbb{N}}\cup \{0\} }}
	\sum_{\substack{{m} \in \mathbb{Z}^d \\ |m_i|\in [\frac{\mu_i}{2},\mu_i] }}
	\chi \left( \frac{\mathcal{Q}({m}) - \lambda^2}{\delta \lambda} \right)
	\bigg)^b
	\, d \lambda
	\, d {\beta'}
	\nonumber
	\\
	&\lesssim
	\int_{[1,2]^d}
	\int_{\lambda_0/2}^{\lambda_0}
	\log^{bd}(\lambda_0)
	\max_{\substack{\lambda_0 \geq\mu_1\geq\dotsb\geq \mu_d \geq 0\\ \mu_i \in 2^{\mathbb{N}}\cup \{0\} }}
	\bigg(
	\sum_{\substack{{m} \in \mathbb{Z}^d \\  |m_i|\in [\frac{\mu_i}{2},\mu_i] }}
	\chi \left( \frac{\mathcal{Q}({m}) - \lambda^2}{\delta \lambda} \right)
	\bigg)^b
	\, d \lambda
	\, d {\beta'},
	\nonumber
	\\
	&
	{}{}{\lesssim
	\int_{[1,2]^d}
	\int_{\lambda_0/2}^{\lambda_0}
	\log^{bd}(\lambda_0)
	\max_{\substack{\lambda_0 \geq\mu_1\geq\dotsb\geq \mu_d \geq 0\\ \mu_i \in 2^{\mathbb{N}}\cup \{0\} }}
	\prod_{j=1}^b
	\sum_{\substack{m_{1j},\dotsc,m_{dj}\in \mathbb{Z} \\  |m_{ij}|\in [\frac{\mu_i}{2},\mu_i] }}
	\mathbf{1}_{ \lvert {\mathcal{Q}(m_{1j},\dotsc,m_{dj}) - \lambda^2}\rvert \leq \delta \lambda} 
	\, d \lambda
	\, d {\beta'},}
	\nonumber
\end{align}
{}{}{
and if we temporarily write the off-diagonal parts of \(\mathcal{Q}(m_{1j},\dotsc,m_{dj})\) using the row vector
	\begin{equation*}
		q
		=
		\Big(\sum_{\substack{1\leq i,j\leq d \\ i \neq j}} \beta_{ij} m_{i1}m_{j1},\dotsc, \sum_{\substack{1\leq i,j\leq d \\ i \neq j}}  \beta_{ij} m_{ib}m_{jb}\Big).
	\end{equation*}
	then this becomes
	\begin{align}
		\MoveEqLeft[1]
		\int_{[1,2]^d}
		\int_{\lambda_0/2}^{\lambda_0}
		\lVert P_{\lambda,\delta} \rVert_{L^1\to L^\infty}^b
		\, d \lambda
		\, d {\beta'}\nonumber
		\\
	&\lesssim
		\int_{[1,2]^d}
		\int_{\lambda_0/2}^{\lambda_0}
		\log^{bd}(\lambda_0)
		\max_{\substack{\lambda_0 \geq\mu_1\geq\dotsb\geq \mu_d \geq 0\\ \mu_i \in 2^{\mathbb{N}}\cup \{0\} }}
		\prod_{j=1}^b
		\sum_{\substack{m_{1j},\dotsc,m_{dj}\in \mathbb{Z} \\  |m_{ij}|\in [\frac{\mu_i}{2},\mu_i] }}
		\mathbf{1}_{ \lvert(\beta_1,\dotsc,\beta_d, - \lambda^2/\lambda_0^2) P(M)-q\rvert \leq \delta \lambda_0}
		\, d \lambda
		\, d {\beta'}\nonumber
	\\
	&=
	\log^{bd}(\lambda_0)
	\max_{\substack{\lambda_0 \geq\mu_1\geq\dotsb\geq \mu_d \geq 0\\ \mu_i \in 2^{\mathbb{N}}\cup \{0\} }}\nonumber
	\\
	&\hphantom{{}={}}
	\sum_{\substack{M\in \mathbb{Z}^{d\times b} \\ |m_{ij}| \in [\mu_i/2,\mu_i] }}
	\mes\big\{
	({\beta'},\lambda)\in [1,2]^d\times [\frac{\lambda_0}{2},\lambda_0] : \lvert(\beta_1,\dotsc,\beta_d, - \lambda^2/\lambda_0^2) P(M)-q\rvert \leq \delta \lambda_0
	\big\},
	\label{eqn:large-measure}
\end{align}
We can estimate the measure inside the last}
 sum  in \eqref{eqn:large-measure} as follows.
Notice first that
\begin{equation}
	\label{eqn:largest-singular-value}
	D_1(P(M)) = \max_{\substack{i=1,\dotsc, d+1 \\ j=1,\dotsc,b}} \lvert p_{ij}(M) \rvert \sim \lambda_0^2.
\end{equation}
By Lemma~\ref{SVD}{}{}{, we have
\begin{multline*}
	\mes\Big\{
	{\gamma}\in \R^{d+1} : \lvert {\gamma}\rvert \leq 1, \,
	\lvert 
	\sum_i \gamma_ip_{ik}(M)
	-
	\sum_{i \neq j} \beta_{ij} m_{ik}m_{jk}
	\rvert \leq \delta \lambda_0 
	\Big\}
	\\
	{}{}{\lesssim}
	\prod_{k=1}^{\min (b,d+1)}
	\min\left( 1,
	\frac{\delta \lambda_0}{\sigma_k(P(M))} \right) =  \prod_{\substack{1 \leq i \leq \min (b,d+1) \\ \sigma_i(P(M))>\delta \lambda_0}} \frac{\delta \lambda_0}{\sigma_i(P(M))}
.
\end{multline*}}
Together with \eqref{eqn:large-measure}  {}{}{and the fact that \(P(M)\) does not depend on the signs of the \(m_{ij}\),} we find
\begin{multline*}
	\int_{[1,2]^d}
	\int_{\lambda_0/2}^{\lambda_0}
	\lVert P_{\lambda,\delta} \rVert_{L^1\to L^\infty}^b
	\, d \lambda
	\, d {\beta'}
	\lesssim
	(\log\lambda_0)^{bd+d}
	\max_{\substack{\lambda_0 \geq\mu_1\geq\dotsb\geq \mu_d \geq 0\\ \mu_i \in 2^{\mathbb{N}}\cup \{0\} }}
	\sum_{\substack{M\in \mathbb{Z}^{d\times b} \\ m_{ij} \in [\mu_i/2,\mu_i] }}
	\lambda_0
 \prod_{\substack{1 \leq i \leq \min (b,d+1) \\ \sigma_i(P(M))>\delta \lambda_0}} \frac{\delta \lambda_0}{\sigma_i(P(M))},
\end{multline*}
{}{}{where the \(m_{ij}\) are now non-negative since \(m_{ij} \in [\mu_i/2,\mu_i] \).}
Combining this with \eqref{eqn:largest-singular-value} and {}{}{the Corollary to Lemma~\ref{bluebird} yields
\begin{equation}
	\int_{[1,2]^d}
	\int_{\lambda_0/2}^{\lambda_0}
	\lVert P_{\lambda,\delta} \rVert_{L^1\to L^\infty}^b
	\, d \lambda
	\, d {\beta'}
	\lesssim
	(\log\lambda_0)^{bd+d}
	\max_{\substack{ \lambda_0^2 \sim L_1\geq \dotsb\geq L_{{\min (b,d+1)}} \geq 0 \\ \lambda_0 \geq\mu_1\geq\dotsb\geq \mu_d \geq 0
\\\mu_i \in 2^{\mathbb{N}}\cup \{0\} 	
}}
	\lambda_0
	\prod_{\substack{1 \leq i \leq \min (b,d+1) \\ L_i>\delta \lambda_0}} \frac{\delta \lambda_0}{L_i}
	Z_{d,b}(\vec{\mu},\vec{L})
	,
	\label{eqn:prescribed-subdeterminants}
\end{equation}
where
\begin{multline}\label{eqn:matrix_count}
Z_{d,b}(\vec{\mu},\vec{L})=
	\#\Big\{ M\in \mathbb{Z}^{d\times b} :  
	\frac{m_{ij}}{\mu_i }\in [\tfrac{1}{2},1],\,
	 \frac{ D_k(P(M))}{L_1\dotsm L_k }\in [\tfrac{1}{2},1]
	 \\\text{for all } 1\leq i \leq d,\;1\leq j\leq b,\;1\leq k \leq {\min (b,d+1)} \Big\}.
\end{multline}
}{}{}{In \eqref{eqn:prescribed-subdeterminants} we may assume that \((\mu_i=0\implies L_{i+1}=0)\), since otherwise \(
Z_{d,b}(\vec{\mu},\vec{L})\) would be zero (there are no such \(M\)).}
In particular allowing \(\mu_i\) to be zero is the same as allowing the dimension \(d\) to drop, in the sense that{}{}{
\begin{multline*}
	\max_{\substack{ \lambda_0^2 \sim L_1\geq \dotsb\geq L_{{\min (b,d+1)}} \geq 0 \\ \lambda_0 \geq\mu_1\geq\dotsb\geq \mu_d \geq 0\\\mu_i \in 2^{\mathbb{N}}\cup \{0\} 
	}}
	\lambda_0
\left[	\prod_{\substack{1 \leq i \leq \min (b,d+1) \\ L_i>\delta \lambda_0}} \frac{\delta \lambda_0}{L_i} \right]
Z_{d,b}(\vec{\mu},\vec{L})
	\\
	=
	\max_{\substack{ 0\leq d'\leq d\\\lambda_0^2 \sim L_1\geq \dotsb\geq L_{{\min (b,d'+1)}} \geq 0 \\ \lambda_0 \geq\mu_1\geq\dotsb\geq \mu_{d'} \geq 1,\,\mu_i \in 2^{\mathbb{N}} }}
	\lambda_0
\left[ \prod_{\substack{1 \leq i \leq \min (b,d'+1) \\ L_i>\delta \lambda_0}} \frac{\delta \lambda_0}{L_i} \right]
Z_{d',b}(\vec{\mu},\vec{L}).
\end{multline*}}

\subsection{Counting matrices with prescribed subdeterminants}

\label{cmwps}


We want to estimate the right-hand side of \eqref{eqn:prescribed-subdeterminants}, \underline{under the assumption that $\mu_i \neq 0$ for any $i$}, or in other words $\mu_i \in 2^\mathbb{N}$. 

{}{}{Our first object is to estimate from above the number of matrices \(M\) counted by the function \(Z_{d,b}(\vec{\mu},\vec{L})\) from \eqref{eqn:matrix_count}. By Lemma~\ref{lem:rearrange-columns}, it suffices to l count  those \(M\) satisfying an additional condition
	\[
	D_k(P(M))\sim D_k^{(k)}(P(M))
	{}{}{\text{ for all }} \leq k \leq {\min (b,d+1)},
	\]
	since permuting the columns of these recovers all the matrices in \(Z_{d,b}(\vec{\mu},\vec{L})\) }

For $j=1,\dotsc,b$ define the vectors $m^{(j)}$ and ${n}^{(j)}\in\R^d$ {}{}{by}
$$
m^{(j)} = {}{}{(m_{1j},\dotsc,m_{dj})^T,} \qquad 
{n}^{(j)}
=
(m_{1j}^2/\mu_1,\dotsc,m_{dj}^2/\mu_d)^T.
$$
{}{}{That is the vectors $m^{(j)}$ are the columns of $M$. Meanwhile the vectors $n^{(j)}$ are the columns of $P(M)$ with the last element dropped and the others rescaled to that   $n^{(j)}$ belongs to the set $S$ defined by
	$$
S
=
\{
(u_{1}^2/\mu_1,\dotsc,u_{d}^2/\mu_d)^T: u_i\in \mathbb{Z},\, u_i \in [\frac{\mu_i}{2},\mu_i]  \},
$$}
{}{}{whose elements are separated by gaps of size \(\sim 1\)}.
If $M$ is counted in the right-hand side of {}{}{\eqref{eqn:matrix_count}}, then the vector ${n}^{(1)}$ can be chosen arbitrarily from $S$; there are ${}{}{\lesssim} \prod_{i=1}^d \mu_i$ choices.

Suppose now that the first $k$ columns of \(P(M)\) are given, and they satisfy
		\[
		 D_\ell(P(m^{(1)}|\dotsb| m^{(k)}))
		 \sim L_1\dotsm L_\ell
		 \qquad
		 (1\leq \ell \leq \min ( k,d+1)).
		\]
		 {}{}{We} want to select $m^{(k+1)}$, or equivalently $n^{(k+1)}$. We can first use that $S$ is 1-separated to replace our counting problem by {}{}{a} volume estimate: letting
\begin{align*}
\mathcal{N}^{k+1} = & \{ n^{(k+1)} \in S:
{}{}{
m^{(k)}_i
 \in [\mu_i/2,\mu_i],}
\, D_\ell(P(m^{(1)},\dots m^{(k+1)})) \sim L_1 \dots L_\ell \\
& \qquad \qquad \qquad \qquad \qquad \qquad \qquad \,\,(1\leq i \leq d, \,1\leq \ell \leq \min (k+1,d+1)) \},
\end{align*}
then
$$
\# \mathcal{N}^{k+1}  \lesssim \mes \left[ \mathcal{N}^{k+1} + B(0,1) \right].
$$
\begin{rem}\label{rem:not-optimal}
	This volume bound is not necessarily optimal. To take just one simple example, if \(\lambda_0^2\) is an integer then \(D_\ell^{(\ell)}(P(M))\) is an integer. Thus, in the case when \(0<L_1\dotsm L_\ell \ll 1\), it is impossible for \(D_\ell^{(\ell)}(P(M))\sim L_1\dotsm L_\ell \) to hold and the set \(\mathcal{N}^{k+1}\) is empty.
\end{rem}
We apply  Lemma~\ref{lem:vol_I}  with
\begin{align*}
	p&=d+1,
	&
	M&={}{}{P(m^{(1)}|\dots |m^{(k)}),}
	&
	\mathcal{M}x&=n^{(k+1)},
	&
	A&=\lambda_0^2,
	&
	R=L_1\dotsm L_{k+1}.
\end{align*}
We compute that
\[
\tau_i
\asymp
\begin{cases}
\max\{
L_i,L_{k+1}
\}
&(k\leq d),
\\
L_i
&(k>d).
\end{cases}
\]
We now need to distinguish two cases. If \(k\leq d\) then  applying Lemma~\ref{lem:vol_I} gives that 	
\[
\# \mathcal{N}^{k+1}
	\lesssim
	1+
	\max_{\substack{ \I\subset \{1,\dotsc,d\} \\ \J \subset \{1,\dotsc,d\} \\ \#\I =\#\J\\ \sigma: \mathcal{I}\hookrightarrow \mathcal{J} }}
	\bigg(\prod_{i\in\mathcal{I}} \mu_i \bigg)
	\bigg( \prod_{\substack{\mu_i^2 > \max(L_{\sigma(i)+1},L_{k+1})\\i\in\mathcal{I}}} \frac{\max(L_{\sigma(i)+1},L_{k+1})}{\mu_i^2} \bigg).
	\]
	 If instead {$k >d$}, applying Lemma~\ref{lem:vol_I} gives that
		\[
		\# \mathcal{N}^{k+1}
	\lesssim
	1+
	\max_{\substack{ \I\subset \{1,\dotsc, d\} \\ \J \subset \{1,\dotsc,d\} \\ \#\I =\#\J\\ \sigma: \mathcal{I}\hookrightarrow \mathcal{J} }}
	\bigg(\prod_{i\in\mathcal{I}} \mu_i \bigg)
	\bigg( \prod_{\substack{{\mu_i^2 > L_{\sigma(i)+1}} \\i\in\mathcal{I}}} \frac{ L_{\sigma(i)+1}}{\mu_i^2} \bigg).
	\]
Recall now that this is a bound for the number of choices for \(m^{(k+1)}\), given \(m^{(1)},\dotsc,m^{(k)}\), and that there are \(\mu_1\dotsm \mu_d\) choices for \(m^{(1)}\). Recall also that our object is to estimate that part of the right-hand side of \eqref{eqn:prescribed-subdeterminants} for which every \(\mu_i\) is nonzero. The bound we have proved is
\begin{multline}
\label{formulamax}
\max_{\substack{ \lambda_0^2 \sim L_1\geq \dotsb\geq L_{{\min (b,d+1)}} \geq 0 \\ \lambda_0 \geq\mu_1\geq\dotsb\geq \mu_d > 0
		\\\mu_i \in 2^{\mathbb{N}}
}}
\lambda_0
\prod_{\substack{1 \leq i \leq \min (b,d+1) \\ L_i>\delta \lambda_0}} \frac{\delta \lambda_0}{L_i} 
Z_{d,b}(\vec{\mu},\vec{L})
\\
\lesssim
	\max_{ \substack{\lambda_0^2 \sim L_1\geq \dotsb\geq L_{\min (b,d+1)} \geq 0 \\ \lambda_0 \geq\mu_1\geq\dotsb\geq \mu_d \geq 0}}
	\max_{\substack{ \I_k \subset \{1,\dotsc,d\} \\ \sigma_k: \mathcal{I}_k \hookrightarrow \{1,\dots, d \} }}
	\mathcal{F}((L_i),(\mu_i),(\mathcal{I}_k),(\sigma_k))
\end{multline}
where $\sigma_k$ is a bijection from $\mathcal{I}_k$ to a subset of $\{1,\dots,d\}$ and
\begin{align*}
& \mathcal{F}((L_i),(\mu_i),(\mathcal{I}_k),(\sigma_k)) \\
& = \lambda_0
	\bigg(
	\prod_{\substack{i=1,\dotsc,\min (b,d+1) \\ L_i>\delta \lambda_0}} \frac{\delta \lambda_0}{L_i}\bigg)
	\bigg(\prod_{i=1}^d \mu_i \bigg)
	\prod_{k=1}^{\min (b-1,d)}	
	\bigg(\prod_{i\in\mathcal{I}_k} \mu_i \bigg)
	\bigg( \prod_{\substack{
			\mu_i^2 > \max\{L_{\sigma_k(i)+1},L_{k+1}\}\\i\in\mathcal{I}_k}}
		\frac{\max(L_{\sigma_k(i)+1},L_{k+1})
	}{\mu_i^2} \bigg) \\
	&  \qquad \qquad \qquad \qquad \prod_{k=\min (b,d+1)}^{b-1}	
	\bigg(\prod_{i\in\mathcal{I}_k} \mu_i \bigg)
	\bigg( \prod_{\substack{\mu_i^2 >L_{\sigma_k(i)+1}\\i\in\mathcal{I}_k }}
	\frac{L_{\sigma_k(i)+1}}{\mu_i^2} \bigg)
\end{align*}
with the understanding that $\mathcal{I}$ might be empty and $\prod_{\mathcal{I}} = 1$ if $\mathcal{I} = \emptyset$.

\subsection{The maximization procedure}
Our aim is now to find the values of \(L_i,\mu_i,\mathcal{I}_k,\sigma_k\) for which the maximum on the right-hand side of~\eqref{formulamax} is attained. 

\medskip
\noindent {}{}{\underline{Step 1: Maximizing in $(L_i)$ with the $\mathcal{I}_k$'s and $\mu_i$'s held fixed.}} We start with the dependence on $(L_i)$, and we relax first the condition that they be ordered; we will simply assume that {}{}{$0 \leq L_i \leq \lambda_0^2$ for each $i=1, \dotsc, \min (b,d+1)$}. Next, we claim that the {}{}{products}
$$
\bigg( \prod_{\substack{
			\mu_i^2 > \max (L_{\sigma_k(i)+1},L_{k+1}) \\i\in\mathcal{I}_k}}
		\frac{\max(L_{\sigma_k(i)+1},L_{k+1})
	}{\mu_i^2} \bigg) \quad \mbox{and} \quad
	\bigg( \prod_{\substack{\mu_i^2 >L_{\sigma_k(i)+1}\\i\in\mathcal{I}_k }}
	\frac{L_{\sigma_k(i)+1}}{\mu_i^2} \bigg)
$$
 on the right-hand side of the definition of $\mathcal{F}((L_i),(\mu_i),(\mathcal{I}_k),(\sigma_k))$ can be taken to be empty. Indeed, assume that the maximum of $\mathcal{F}$ is reached at a point such that, for some $k$ in the product, the corresponding {}{}{product} is not empty: it contains $\frac{\max(L_{\sigma_k(i)+1},L_{k+1})}{\mu_i^2} $ or $\frac{L_{\sigma_k(i)+1}}{\mu_i^2}$ for some $i$; we will denote it $\frac{L_{i_0}}{\mu_{i_1}^2}$. Expand $\mathcal{F}$ in powers of the $L_i$, and consider the exponent of $L_{i_0}$. It is necessarily $\geq 0$, since only the first factor in the definition of $\mathcal{F}$ contributes a negative power, namely $L_{i_0}^{-1}$. Therefore, $\mathcal{F}$ will be larger, or equal, if we increase the value of $\max(L_{\sigma_k(i)+1},L_{k})$ to $\mu_{i_1}^2$, which has the effect of {}{}{cancelling} the undesirable term.

{After this manipulation, the parentheses we mentioned have been cancelled, and the value of some of the $L_i$'s has been fixed to $\mu_{f(i)}^2$ for some function $f$. The remaining $L_i$ contribute $\mathbf{1}_{L_i > \delta \lambda_0} \frac{\delta \lambda_0}{L_i }$, and they might be constrained by inequalities of the type $L_i > \mu_j^2$. Therefore, $\mathcal{F}$ will be maximal if they take the value $\delta \lambda_0$, or $\mu_{f(i)}^2$ for some function $f$. }

\medskip
\noindent {}{}{\underline{Step 2: Maximizing in $(\mu_i)$.}} The result of the maximization in $(L_i)$ is that we can assume that each $L_i$ takes the value either $\mu_{f(i)}^2$, or $\delta \lambda_0$, that $\mu_i^2 \leq \max(L_{\sigma_k(i)+1},L_{k+1}) \; \mbox{if $i \in \mathcal{I}_k$}$, with the convention that $L_{k} = 0$ for {$k \geq d+2$}, and that the function to maximize is
\begin{equation}
\label{functiontomaximize}
\lambda_0
	\bigg(
	\prod_{\substack{i=1,\dotsc,\min (b,d+1) \\ L_i>\delta \lambda_0}} \frac{\delta \lambda_0}{L_i}\bigg)
	\bigg(\prod_{i=1}^d \mu_i \bigg)
	\prod_{k=1}^{b-1}	
	\bigg(\prod_{i\in\mathcal{I}_k} \mu_i \bigg).
\end{equation}
We now claim that, at the maximum, the $(\mu_i)$ either take the value $1$ or $\lambda_0$. To prove this claim, assume that, at the maximum, the $\mu_i$ take a number $n$ of distinct values $1 \leq a_1 < \dots < a_n \leq \lambda_0$. Replacing $L_i$ by $\mu_{f(i)}^2$ in the above expression, it takes the form
$$
\lambda_0 (\lambda_0 \delta)^{\alpha_0} \prod_{i=1}^n a_i^{\alpha_i}, \qquad \mbox{where $\alpha_i \in \mathbb{Z}$}.
$$
If $\alpha_i >0$ and $a_i < \lambda_0$, then this expression will increase if the value of $a_i$ is increased until $a_{i+1}$ or $\lambda_0$; and similarly, if $\alpha_i <0$ and $a_i > 1$, it will decrease if the value of $a_i$ is decreased until $a_{i-1}$ or $1$. This contradicts the maximality of $(\mu_i)$ unless $a_i$ only takes the values $\lambda_0$ or $1$ for $\alpha_i \neq 0$. There remains the case where $\alpha_i =0$, but then $\mu_i$ can be assigned the value $\lambda_0$ or $1$ indifferently.

\medskip
\noindent {}{}{\underline{Step 3: Maximizing in $(\mathcal{I}_k)$ and $(\sigma_k)$.}} We showed that the maximum of $\mathcal{F}$ is less than the maximum of~\eqref{functiontomaximize}, under the constraint that $\mu_i^2 \leq \max(L_{\sigma_k(i)+1},L_{k+1})$ if $i \in \mathcal{I}_k$ (with the convention that $L_{k} = 0$ for ${k \geq d+2}$); and under the further constraint that $L_i$ can only take the values $\delta \lambda_0,1,\lambda_0$, and $\mu_i$ can only take the values $1,\lambda_0$.

There are now two cases to consider:
\begin{itemize}
\item If \(k\leq \min (b-1,d)\) and $L_{k+1} = \lambda_0^2$, then the optimal choice for $\mathcal{I}_k$ is $\{ 1,\dots,d \}$.
\item Otherwise,  $\mathcal{I}_k$ should have the same cardinal as the set of $L_i$, $i \geq 2$, equal to $\lambda_0^2$, and $\sigma_k +1$ should map $\mathcal{I}_k$ to this set.
\end{itemize}

\medskip
\noindent {}{}{\underline{Step 4: Conclusion.}} As a result of the previous reductions, we find 
 \begin{multline*}
 	\max_{ \substack{\lambda_0^2 \sim L_1\geq \dotsb\geq L_{\min (b,d+1)} \geq 0 \\ \lambda_0 \geq\mu_1\geq\dotsb\geq \mu_d \geq 0}}
 	\max_{\substack{ \I_k \subset \{1,\dotsc,d\} \\ \sigma_k: \mathcal{I}_k \hookrightarrow \{1,\dots, d \} }}
 	\mathcal{F}((L_i),(\mu_i),(\mathcal{I}_k),(\sigma_k)) \lesssim
\\
\max_{\substack{
		\lambda_0=L_1\geq \dotsb \geq L_{\min (b,d+1)}
		\\
		\mu_1\geq \dotsb \geq\mu_d
		\\
		\\L_i \in\{\delta\lambda_0,1,\lambda_0^2\}, \mu_i\in\{1,\lambda_0\}}}
\lambda_0
\bigg(
\prod_{\substack{i=1,\dotsc,\min (b,d+1) \\ L_i>\delta \lambda_0}} \frac{\delta \lambda_0}{L_i}\bigg)
\bigg(\prod_{i=1}^d \mu_i \bigg)
\prod_{k=1}^{b-1}	
\bigg({\prod_{\substack{
i\in\{1,\dotsc,d\}
\\
L_{i+1}=\lambda_0^2,\text{ or}
\\
(k\leq d \text{ and }L_{k+1}=\lambda_0^2)
}}} \mu_i \bigg).
\end{multline*}

{}{}{Notice that we are assuming again that the $(\mu_i)$ and $(L_i)$ are ordered; a moment of reflection shows that this is possible since the permutation $(\sigma_k)$ can be freely chosen.}
This expression is visibly nondecreasing in $(\mu_i)$, so we might as well take all $\mu_i$ to be $\lambda_0$. 

In order to evaluate the resulting expression, we need to know the number of $L_k$ equal to, respectively, $\delta \lambda_0,1,\lambda_0$; this is also the information needed to determine $\mathcal{I}_k$; therefore, we define the numbers
\begin{align*}
	b_0 &= \#\{1\leq j\leq \min (b,d+1) : L_j = \delta\lambda_0\},
	\\	b_1 &= \#\{1\leq j\leq \min (b,d+1) : L_j = 1\},
	\\	b_2 &= \#\{1\leq j\leq \min (b,d+1) : L_j = \lambda_0^2\},
\end{align*}
which are such that
$$
b_0 + b_1 + b_2 = \min (b,d+1).
$$
Letting \(\chi=1\) if \({1>\delta\lambda_0}\) and \(\chi=0\) otherwise, the maximization procedure shows that the function to be optimized is bounded by
\begin{align*}
& \max_{b_0+b_1+b_2=\min (b,d+1)} \lambda_0 (\delta \lambda_0)^{b_1\chi + b_2} \lambda_0^{-2 b_2} \lambda_0^d \lambda_0^{(b_2-1) d} \lambda_0^{(b - b_2)(b_2-1)} \\
& \qquad \qquad = \max_{b_1+b_2\leq\min (b,d+1)} {(\delta \lambda_0)^{b_1\chi}\delta^{b_2}}  \lambda_0^{-b_2^2 + (b+d) b_2 + 1-b}
\end{align*}
We notice first that $b_1$ can be taken {}{}{to be zero}. Second, there remains to dispose of the assumption that all $\mu_i$ are {}{}{non-zero}, which was made at the beginning of Subsection~\ref{cmwps}. By the comments just prior to the start of Subsection~\ref{cmwps}, this equivalent to reducing the dimension $d$. But some thought shows that the above expression is increasing with  $d$, so that allowing for smaller $d$ is harmless. Overall, the final bound we find is
$$
\max_{ \substack{\lambda_0^2 \sim L_1\geq \dotsb\geq L_{\min (b,d+1)} \geq 0 \\ \lambda_0 \geq\mu_1\geq\dotsb\geq \mu_d \geq 0}}
\max_{\substack{ \I_k \subset \{1,\dotsc,d\} \\ \sigma_k: \mathcal{I}_k \hookrightarrow \{1,\dots, d \} }}
\mathcal{F}((L_i),(\mu_i),(\mathcal{I}_k),(\sigma_k))\lesssim
 \max_{b_2 \leq \min (b,d+1)}  \delta^{ b_2} \lambda_0^{-b_2^2 + (b+d) b_2 + 1-b}.
$$

\subsection{Borel-Cantelli and the end of the argument}

The three previous subsections give the estimate
$$
\int_{[1,2]^d}
	\int_{\lambda_0/2}^{\lambda_0}
	\lVert P_{\lambda,\delta} \rVert_{L^1\to L^\infty}^b
	\, d \lambda
	\, d {\beta'} \lesssim 
	(\log\lambda_0)^{bd+d}\max_{b_2 \leq \min (b,d+1)}  \delta^{ b_2} \lambda_0^{-b_2^2 + (b+d) b_2  + 1-b},
$$ 
valid uniformly for any off-diagonal coefficients \(\)
Since $P_{\lambda,\delta}$ varies on a scale $\sim \delta$, this implies that, for $\lambda \in [\frac{\lambda_0}{2}, \lambda_0]$,
$$
\int_{[1,2]^d} \sup_{\frac{\lambda_0}{2} < \lambda < \lambda_0} \lVert P_{\lambda,\delta} \rVert_{L^1\to L^\infty}^b \, d \beta'
\lesssim
(\log\lambda_0)^{bd+d}
 \delta^{-1}  \max_{b_2 \leq \min (b,d+1)}  \delta^{ b_2} \lambda_0^{-b_2^2 + (b+d) b_2 + 1-b}.
$$
By  Borel-Cantelli as in Appendix~\ref{appendix}, this implies that, for any $\epsilon > 0$,
\begin{equation}\label{eq:last-max}
	\lVert P_{\lambda,\delta} \rVert_{L^1\to L^\infty} \lesssim_{\beta,\epsilon} \delta^{-1/b} \lambda^{\epsilon} \left[ \max_{b_2 \leq \min (b,d+1)}  \delta^{ b_2} \lambda^{-b_2^2 + (b+d) b_2 + 1-b} \right]^{1/b},
\end{equation}
both for fixed off-diagonal coefficients and almost all \(\beta'\in[1,2]^d\), and also for almost all matrices \((\beta_{ij})\) with \(\beta'\in[1,2]^d\) and small off-diagonal coefficients. This proves \eqref{eq:thm-sharp}.

We now observe that if \(b_2\) is strictly less than \(\min (b,d+1)\), then
\begin{align*}
\frac{ \delta^{ b_2+1} \lambda^{-(b_2+1)^2 + (b+d) (b_2+1) +1-b} }
{
 \delta^{ b_2} \lambda^{-b_2^2 + (b+d) b_2 + 1-b} 
}
& =
\delta
\lambda^{-2b_2-1+b+d}
\\
&\geq
\delta
\lambda^{-2\min (b-1,d)-1+b+d}.
\end{align*}
This will be \(\geq 1\) provided that \(\delta\geq\lambda^{\min (b-d-1,d+1-b)}\), and so for such \(\delta\) the maximum in \eqref{eq:last-max} is reached for $b_2 = \min (b,d+1)$. We will therefore impose the condition  \(\delta\geq\lambda^{\min (b-d-1,d+1-b)}\) for convenience rather than because we believe it to be optimal. This yields
$$
\lVert P_{\lambda,\delta} \rVert_{L^1\to L^\infty} \lesssim_{\epsilon,\beta'}
\begin{cases}
\delta^{1- \frac 1 b} \lambda^{d - 1 + {\frac{1}{b}} + \epsilon}
&(b\leq d, \delta\geq\lambda^{b-d-1}),
\\
{\delta^{\frac d b} \lambda^{ d -\frac{d}{b}+ \epsilon}}
&(b\geq d+1, \delta\geq\lambda^{d+1-b}).
\end{cases}
$$
We use the first of these alternatives when \(\delta\geq\lambda^{1-d} \), and otherwise we use the second. In particular by writing \(a= d+1-b\) if \(\delta\geq\lambda^{1-d}\) and \(a = b-d-1\) otherwise, we obtain \eqref{eq:thm-blunt}. \qed

\appendix

\section{Borel-Cantelli lemma}\label{appendix}

{}{}{
	At several points we use versions of the following argument to turn a bound on average over \((\beta_{ij})\) into a bound for almost all \(\beta\).
}

{}{}{
	Let \(\beta=(\beta_{ij})\) be as in Definition~\ref{defgeneric}. That is, either \(\beta_{ij}=\delta_{ij}\beta_i\) with \(\beta_i\in[1,2]\) for each \(i\); or $\beta_{ij} = \delta_{ij} + h_{ij}$ for each \(1\leq i,j \leq d\) and some $h_{ij}=h_{ji} \in [-\frac{1}{10d^2} , \frac{1}{10d^2} ]$.}
	
	{}{}{
	We consider a function \(\Phi( \beta, \vec{P} )\) that depends on \((\beta_{ij})\) and also a list of parameters \(P_1,\dotsc,P_k\), for some \(k\in\N\). In applications these parameters will be integer powers of 2, constrained by some {}{}{inequalities}.
	For example in the proof of Lemma~\ref{generic} we should
let \(\vec{P}\) take values in the set
	\[
	S=\{
	(N,T, Q_1,\dotsc,Q_d)\ \in (2^{\Z})^{d+2} : 
	2\leq Q_j \leq N, \tfrac{1}{4N}\leq T \leq N^\kappa
	\},
	\]
	and put
	\[
	\Phi( \beta, \vec{P} )=
	\int_{1}^{2} \dots \int_{1}^{2} \int_0^T \Lambda_{Q_1}(t) \Lambda_{Q_2}(\beta_2 t) \dots \Lambda_{Q_k}(\beta_k t)\,dt \,d\beta_2 \dots d\beta_k.
	\]
	\begin{lem}\label{borelcantelli}
	Let \(\Phi\) be as above. 
	Assume that \(\vec{P}\) takes values in a subset \(S\subset(2^{\Z})^{k}\) such that for any \(\epsilon>0\) we have
	\[
	\sum_{\vec{P}\in S}
	P_1^{-\epsilon}
	<\infty.
	\]
	If \(\Psi\) is some real function of \(\vec{P}\) with
	\[
	\int 
	|\Phi( \beta, \vec{P} )|
	\, d\beta
	\leq \Psi(\vec{P}),
	\]
	then for every \(\vec{P}\in S\), then for generic \(\beta\) we have
	\[
	\Phi( \beta, \vec{P} )
	\lesssim_{\beta,\epsilon}
	P_1^{\epsilon}
	 \Psi(\vec{P}).
	\]
\end{lem}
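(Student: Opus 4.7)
The plan is to apply Markov's inequality termwise in \(\vec{P}\in S\), sum the resulting measure estimates using the summability hypothesis, and conclude via the classical Borel--Cantelli lemma. First I would fix \(\epsilon>0\) and, for each \(\vec{P}\in S\) with \(\Psi(\vec{P})>0\), combine the hypothesis \(\int |\Phi(\beta,\vec{P})|\,d\beta \leq \Psi(\vec{P})\) with Markov's inequality at level \(t=P_1^{\epsilon/2}\Psi(\vec{P})\) to obtain
\[
\mes B_{\vec{P}} \leq P_1^{-\epsilon/2}, \qquad \text{where } B_{\vec{P}} := \{\beta : |\Phi(\beta,\vec{P})| > P_1^{\epsilon/2}\Psi(\vec{P})\}.
\]
The degenerate case \(\Psi(\vec{P})=0\) forces \(\Phi(\beta,\vec{P})=0\) for almost every \(\beta\), and the corresponding null set can be discarded.

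Next, I would apply the summability hypothesis with \(\epsilon/2\) in place of \(\epsilon\) to obtain \(\sum_{\vec{P}\in S}\mes B_{\vec{P}} < \infty\). Since \(S\subset (2^{\mathbb{Z}})^k\) is countable, the classical Borel--Cantelli lemma then gives that for almost every \(\beta\) the set \(F(\beta) := \{\vec{P}\in S : \beta\in B_{\vec{P}}\}\) is finite. For any such generic \(\beta\) and any \(\vec{P}\notin F(\beta)\), failure of the defining inequality of \(B_{\vec{P}}\) yields
\[
|\Phi(\beta,\vec{P})| \leq P_1^{\epsilon/2}\Psi(\vec{P}) \leq P_1^{\epsilon}\Psi(\vec{P}),
\]
which is the required bound off of \(F(\beta)\).

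The remaining step is to absorb the finitely many \(\vec{P}\in F(\beta)\) into a \(\beta\)- and \(\epsilon\)-dependent constant. Since \(\Phi(\beta,\vec{P})\) is almost-everywhere finite in \(\beta\) for each fixed \(\vec{P}\) (as it is integrable in \(\beta\)), intersecting over the countable family \(\vec{P}\in S\) preserves a full-measure set. On this set I would set
\[
C(\beta,\epsilon) := \max\Bigl(1,\ \max_{\vec{P}\in F(\beta)} \frac{|\Phi(\beta,\vec{P})|}{P_1^{\epsilon}\Psi(\vec{P})}\Bigr),
\]
which is finite and yields the uniform estimate \(|\Phi(\beta,\vec{P})| \leq C(\beta,\epsilon)\, P_1^{\epsilon}\Psi(\vec{P})\) for every \(\vec{P}\in S\). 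The argument is essentially routine; the only mild subtleties are the treatment of the degenerate case \(\Psi(\vec{P})=0\) and the absorption of the finite exceptional set \(F(\beta)\) into the implicit constant, both handled by removing null sets of \(\beta\).
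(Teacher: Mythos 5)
Your proof is correct and is precisely the standard Markov--plus--Borel--Cantelli argument that the paper invokes without writing out (it simply states that the lemma "follows at once from the standard statement of the Borel--Cantelli lemma"). Your handling of the degenerate case \(\Psi(\vec{P})=0\) and the absorption of the finite exceptional set \(F(\beta)\) into the constant \(C(\beta,\epsilon)\) are the right details to check, and they are handled correctly.
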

}{}{}{
The lemma follows at once from the standard statement of the Borel-Cantelli lemma.}


\bibliographystyle{plain}
\bibliography{references}
\end{document}